\newtheorem{thm}{Theorem}[section]
\newtheorem{prop}[thm]{Proposition}
\newtheorem{lem}[thm]{Lemma}
\newtheorem{cor}[thm]{Corollary}
\crefname{thm}{Theorem}{Theorems}
\crefname{prop}{Proposition}{Propositions}
\crefname{lem}{Lemma}{Lemmas}
\crefname{cor}{Corollary}{Corollaries}
\crefname{claim}{Claim}{Claims}
\crefname{step}{Step}{Steps}
\theoremstyle{definition}
\newtheorem{defn}[thm]{Definition}
\newtheorem{ex}[thm]{Example}
\crefname{defn}{Definition}{Definitions}
\crefname{ex}{Example}{Examples}
\theoremstyle{remark}
\newtheorem{rmk}[thm]{Remark}
\crefname{rmk}{Remark}{Remarks}
\newtheorem*{theorem*}{Theorem}
\newtheoremstyle{named}%
    {}{}{}{}{\bfseries}{.}{.5em}{\thmnote{#3}}
\theoremstyle{named}
\newtheorem*{namedthm}{Theorem}
\numberwithin{equation}{section}
\newcommand{\pair}[2]{\ensuremath\langle#1,#2\rangle}
\newcommand{\norm}[1]{\ensuremath\lVert#1\rVert}
\newcommand{\defeq}{\vcentcolon=}
\newcommand{\trinorm}[1]{\ensuremath|\!|\!|#1|\!|\!|}
\newcommand{\eqdef}{=\vcentcolon}
\renewcommand{\leq}{\leqslant}
\renewcommand{\geq}{\geqslant}
\let\temp\phi
\let\phi\varphi
\let\varphi\temp
\let\temp\varepsilon
\let\varepsilon\epsilon
\let\epsilon\temp
\DeclareMathOperator{\tr}{tr}
\newcommand{\C}{\mathbb{C}}
\newcommand{\N}{\mathbb{N}}
\newcommand{\Pc}{\mathcal{P}}
\newcommand{\Z}{\mathbb{Z}}
\newcommand{\de}{\partial}
\newcommand{\di}{\mathrm{d}}
\newcommand{\R}{{\mathbb{R}}}
\newcommand{\call}[1]{\ensuremath\mathcal{#1}}
\newcommand{\frk}[1]{\ensuremath\mathfrak{#1}}
\newcommand{\scr}[1]{\ensuremath\mathscr{#1}}
\newcommand{\bb}[1]{\ensuremath\mathbb{#1}}
\newcommand{\barr}[1]{\ensuremath\bar{#1}}
\renewcommand{\bar}[1]{\ensuremath\overline{#1}}
\newcommand{\trn}{\mathsf{T}}
\newcommand\Sym{\scr S}
\newcommand{\tmod}{\mathrm{mod}\,}
\newcommand\mres\llcorner
\begin{document}

\date{\today}

\title{Some remarks on Linear-quadratic closed-loop games with many players}
\author[M.\ Cirant]{Marco Cirant}
\author[D.\ F.\ Redaelli]{Davide Francesco Redaelli}
\address{Dipartimento di Matematica ``T.\ Levi-Civita''\\ Università degli Studi di Padova\\ Via Trieste 63, 35121, Padova, Italy}
\email{cirant@math.unipd.it (Cirant)}
\email{redaelli@math.unipd.it (Redaelli)}

\keywords{}

\subjclass[2010]{}

\begin{abstract}  
We identify structural assumptions which provide solvability of the Nash system arising from a linear-quadratic closed-loop game, with stable properties with respect to the number of players. In a setting of interactions governed by a sparse graph, both short-time and long-time existence of a classical solution for the Nash system set in infinitely many dimensions are addressed, as well as convergence to the solution to the respective ergodic problem as the time horizon goes to infinity; in addition, equilibria for the infinite-dimensional game are shown to provide $\epsilon$-Nash closed-loop equilibria for the $N$-player game. In a setting of generalized mean-field type (where the number of interactions is large but not necessarily symmetric), directly from the $N$-player Nash system estimates on the value functions are deduced on an arbitrary large time horizon, which should pave the way for a convergence result as $N$ goes to infinity.
\end{abstract}

\maketitle

\tableofcontents

\section{Introduction}

Consider a stochastic differential game with $N$ players, indexed by $i \in [[N]] \defeq \{ 0,\dots, N-1\}$, where the state $X^i$ of the $i$-th player evolves according to the $\R^d$-valued SDE
\[
\di X^i_t = \alpha^i_t \,\di t + \sqrt{2} \,\di B^i_t .
\]
His/her cost, in the fixed time horizon $[0,T]$, is given by
\begin{equation} \label{dir_costi}
J^i(\alpha) = \frac12 \, \bb E \int_0^T\! \big( \lvert\alpha^i\rvert^2 + \pair{F^i X}{X} \big) + \pair{G^i X_T}{X_T},
\end{equation}
for some $F^i = f^i \otimes I_d$ and $G^i = g^i \otimes I_d$, with $f^i \in C^0([0,T];\Sym(N))$ and $g^i \in \Sym(N)$.  The $B^i$'s are independent $\R^d$-valued Brownian motions, and the $\alpha^i$'s are closed-loop controls in feedback form, that is $\alpha^i = \alpha^i(t,X_t)$.

It is known (see for instance \cite[Section 2.1.4]{cardelar}, or \cite{Friedman}) that the value functions of the players, $u^i = u^i(t,x)$ with $i \in [[N]]$, $t \in [0,T]$ and $x = (x^0,\dots,x^{N-1}) \in (\R^d)^N$ solve the so-called Nash system of Hamilton-Jacobi PDEs
\begin{equation} \label{dir_NS_intro}
\begin{cases}
-\de_t u^i - \Delta u^i + \frac12 \lvert D_i u^i \rvert^2 + \sum_{j\neq i} D_j u^j D_j u^i = \bar F^i \\
u^i(T,\cdot) = \bar G^i
\end{cases}
\end{equation}
where $\bar F^i = \frac12 \pair{F^i\cdot}\cdot$, $\bar G^i = \pair{G^i \cdot}{\cdot}$ and $i \in [[N]]$. The equilibrium feedbacks are then given by $\alpha^i = -D_i u^i$. Since the dynamic of each player is linear, and the costs are quadratic in the state and control variables, it is well-known that the previous system can be recast into a system of ODEs of Riccati type, by making the ansatz that $u^i$ are quadratic functions of the states. Here, we look for conditions on the solvability of such system, and we focus in particular on properties that are \textit{stable} as the number of players goes to infinity, with the aim of addressing the limit problem with infinitely many players, whenever possible. The Laplacian appears in \eqref{dir_NS_intro} by the presence of the independent noises $B^i$, but it will not actually play any relevant role in our analysis. We keep it since the purely deterministic system is not known to be well posed, beyond the linear quadratic setting.

The study of differential games with many players has enjoyed a rapid development in the last two decades, since the introduction of the theory of Mean Field Games (MFGs) independently by Lasry and Lions \cite{LL07} and Huang, Caines and Malham\'e \cite{hcm}. MFG theory provides an effective paradigm for studying dynamic games with many players that are both symmetric (that is, indistinguishable) and negligible. In such framework, one has a limit model, involving the equilibrium between a typical player versus a mass of agents, which is decentralised and symmetric: in fact, it is realised by a feedback of the state of the player only, it is identical for all the players, and it can be computed just by observing the population of players at the initial time. We refer to the book \cite{cardelar} and the lecture notes \cite{CPnotes} for a recent account of the theory of MFGs.

If the MFG assumption is not fullfilled, that is, $F^i$ and $G^i$ do not depend just on the empirical measure of the other players, one enters into the broader framework of network games. There has been in the last few years an increasing interest in the understanding of the large population limits of equilibria among players whose interactions are described by graphs. Roughly speaking, when the number of interactions is ``large'', the MFG description, or a suitable generalisation based on the notion of \textit{graphon} effectively characterizes large population limits: see for instance \cite{Carmona2, BWZ, CainesHuang, LackerSoretLabel} and references therein. See also \cite{DelarueESAIM} for the analysis of a model on Erd\H{o}s-R\'enyi graphs.

On the other hand, when the underlying network structure is sparse, very few results are available in the literature. In the dense regime, one expects all the players to have an individual negligible influence on a given player, because their running costs involve cumulative functions of many variables, while in sparse regimes, \textit{most} of the players should have a small impact just because they are ``far'' with respect to the graph distance. This mechanism of independency between players that are far in the graph has been first observed in \cite{LackerSoret}, by means of correlation estimates.

The framework addressed in \cite{LackerSoret} is somehow similar to ours. The linear-quadratic setting is considered, and under some symmetry assumptions on the underlying graph, Nash equilibria are computed explicitly (exploiting also the running costs $\overline F^i$ to be identically zero, and $\overline G^i$ to have a specific structure). Then, a probabilistic information on the covariance of any two players' equilibrium state processes is derived. A main goal in our work is to derive an analogous information in more analytic terms; that is, we wish to quantify the influence of the $j$-th player on the $i$-th one by estimating $D_j u^i$, with the perspective of developing some ideas that could be applied also beyond the linear-quadratic framework. 

\smallskip

We now describe more in details our results. The first part of the paper is devoted to analyse a sort of sparse regime with a special structure, namely \textit{shift-invariance}, where basically $f^i$ and $g^i$ coincide with $f^{i-1}$ and $g^{i-1}$, respectively, after the permutation of variables $x^i \mapsto x^{i-1}$. Most importantly, we assume players to have \textit{nearsighted interactions}, that means
\[
|f^i_{hk}|, |g^i_{hk}| \lesssim \beta_{h-i}\beta_{k-i} \qquad\forall\, {h,k},
\]
where $(\beta_k)_k$ is a suitable sequence in $\ell^1(\Z)$. Since $\beta_{k-i}$ decays as $|k-i|\to \infty$, this means that $f^i_{hk}$, which quantifies the influence of the $h$-th and $k$-th player on the $i$-th one, decreases as $|h-i|$ and $|k-i|$ increase. A prototypical example is given by the bi-directed chain, where $f^{i}_{hk}$ is a sparse matrix with zero entries except for $|h-i| \le 1$ and $|k-i| \le 1$, so that the $i$-th player cost depends only on the $(i+1)$-th and the $(i-1)$-th state. See also \cite{Feng} for results on models that build upon a similar structure.

The first statement we get, which resumes Theorems \ref{thm:locex} and \ref{genlocex}, is the following.

\begin{theorem*} There exists $T^* > 0$ such that if $T \leq T^*$ then for any $N \in \N \cup \{\infty\}$ there exists a smooth solution to system~\eqref{dir_NS_intro} such that, for any $i,j$ and $m \in \N$,
\begin{equation*} 
\bigg\Vert \Big(\frac{\di}{\di t}\Big)^m D_{hk} u^i  \bigg\Vert_{\infty} \lesssim \beta_{h-i} \beta_{k-i}.
\end{equation*}
\end{theorem*}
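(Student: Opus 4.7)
The plan is to exploit the linear-quadratic structure to reduce the Nash system \eqref{dir_NS_intro} to a matrix-valued Riccati ODE system, and then to solve the latter via a contraction argument on a Banach space tailored to the decay $\beta_{h-i}\beta_{k-i}$. Using the ansatz $u^i(t,x) = \frac12\pair{c^i(t) x}{x} + \gamma^i(t)$ with $c^i(t)\in\Sym(N)$ (the $I_d$ factor being transparent) and $\gamma^i$ absorbing the trace term coming from the Laplacian, the system decouples into a trivial scalar equation for $\gamma^i$ and, by matching the coefficient of $\pair{x^h}{x^k}$, the Riccati system (up to symmetrization factors)
\[
-\dot c^i_{hk} + c^i_{ih}c^i_{ik} + \sum_{j\ne i}\bigl(c^j_{jh}\, c^i_{jk} + c^i_{jh}\, c^j_{jk}\bigr) = f^i_{hk},\qquad c^i(T) = g^i.
\]
Since $D_{hk}u^i = c^i_{hk}\,I_d$, estimating $D_{hk}u^i$ and its time derivatives amounts to estimating $c^i_{hk}$ and its time derivatives.

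The natural functional setting is the Banach space $\call X_T$ of tensors $c = (c^i_{hk})_{i,h,k}$ of continuous functions on $[0,T]$ with finite weighted norm
\[
\Vert c\Vert_* \defeq \sup_{t\in[0,T]}\sup_{i,h,k} \frac{|c^i_{hk}(t)|}{\beta_{h-i}\beta_{k-i}}.
\]
The data $f,g$ lie in this space by the nearsightedness assumption, with norms bounded uniformly in $N\in\N\cup\{\infty\}$. The crucial property is a submultiplicative bound on $\beta$,
\[
\sum_{j\in\Z} \beta_{h-j}\beta_{j-i}\lesssim \beta_{h-i},\qquad\forall\, h,i\in\Z,
\]
to be built into the choice of $\beta$; this ensures that the quadratic coupling respects the weights, e.g.\ $\sum_j|c^j_{jh}c^i_{jk}|\lesssim \beta_{k-i}\sum_j \beta_{h-j}\beta_{j-i}\lesssim \beta_{h-i}\beta_{k-i}$. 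With this in hand, the map $\Psi\colon \call X_T \to \call X_T$ obtained by integrating the ODE backwards from $T$ sends a suitable ball into itself and is a contraction for $T\leq T^*$, with $T^*$ independent of $N$, yielding a unique fixed point $c$ with $\Vert c\Vert_*$ controlled by $\Vert f\Vert_*+\Vert g\Vert_*$. The infinite-player case $N=\infty$ is handled by the same argument, the summability of $\beta$ providing absolute convergence of the series $\sum_{j\ne i}$.

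Finally, the higher-order time estimates $\bigl\Vert(\di/\di t)^m D_{hk}u^i\bigr\Vert_\infty \lesssim \beta_{h-i}\beta_{k-i}$ are bootstrapped directly from the Riccati ODE: since its right-hand side is quadratic in $c$, each $(\di/\di t)^m c^i_{hk}$ is, by induction and Leibniz, a finite polynomial in lower-order time derivatives of entries of $c$, to which the same submultiplicative bound applies. I expect the main obstacle to be identifying an explicit class of admissible weights $\beta$ (summable and subconvolutive) that covers the sparse prototypes discussed, in particular the bi-directed chain, and---in the case $N = \infty$---rigorously interpreting the resulting quadratic $u^i$ as a genuine classical solution on an appropriate (locally convex) space of states, rather than merely as a formal coefficient-matched one.
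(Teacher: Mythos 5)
Your proposal follows essentially the same route as the paper: the ansatz reduces the Nash system to the Riccati ODE \eqref{dir_peqc}, and the short-time existence with the stated decay is obtained by a Banach fixed-point argument on the weighted space $\sup_{i,h,k}\beta_{h-i}^{-1}\beta_{k-i}^{-1}\lVert c^i_{hk}\rVert_\infty$, using exactly the submultiplicative property $\beta\star\beta\lesssim\beta$ (the paper calls such $\beta$ \emph{c-self-controlled} and constructs admissible $\ell^1$ examples as Fourier coefficients of $e^{-\alpha|\cdot|}$ in Lemma~\ref{lem:exseq}) together with a bootstrap on the ODE for higher time derivatives. The two obstacles you flag at the end are precisely what the paper handles via Lemma~\ref{lem:exseq} and Definition~\ref{dir_def:cs}/Theorem~\ref{genlocex} (Fr\'echet differentiability on $\ell^\infty(\Z;\R^d)$ with locally uniform convergence of the coupling series).
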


Note in particular that we get existence of a smooth classical solution to the infinite-dimensional Nash system \eqref{dir_NS_intro} with $N=\infty$, $i \in \Z$. We stress that the key issue in the analysis of our problem is that, despite the cost functions $f^i, g^i$ may depend on very few variables, the system itself is strongly coupled by the transport terms $\sum_{j\neq i} D_j u^j D_j u^i$, which become in fact series when $N = \infty$. The closed-loop structure of equilibria forces the equilibrium feedbacks $\alpha^j = -D_j u^j$ to be strongly ``nonlocal'', that is, they depend on the full vector state $x$. Hence, decay estimates on $D_j u^i$ which are stable as $N$ increases are crucial to pass to the limit $N \to \infty$. These are obtained here by a careful choice of $\beta$: below, we will use the terminology \emph{self-controlled for the discrete convolution}, or briefly \emph{c-self-controlled} (where the ``c'' stands for ``convolution'') , that fits well with the structure of cyclic discrete convolution appearing in our problem. From the game perspective, the equilibrium feedbacks $\alpha^j$ turn out to be almost ``local'', in the sense that the influence of ``far'' players is still negligible in the sense explained above. In other words, despite the strong coupling given by the full information structure of closed-loop equilibria, the property of ``unimportance of distance players'' is observed (see Remark \ref{unimport} for further discussions).
\smallskip

While the shift-invariance condition can be actually dropped (see Section \ref{s:beyond}), one of the main restrictions of the previous result is that it guarantees short-time existence only. Note that, even with a finite number of players $N$, Riccati systems may fail to have long time solutions in general; therefore we look for further conditions on $f^i$ and $g^i$ such that existence holds for any time horizon $T$, and independently on $N$. To achieve this goal, we strengthen the previous assumption on nearsighted interactions, that now become of \textit{strongly gathering} type, and require further \textit{directionality} conditions. Section \ref{s:ltwp} contains precise definition of this notion and examples. The main existence result is stated in Theorem \ref{dir_wp}. Here, we exploit the possibility to relate a solution to system~\eqref{dir_NS_intro} to a flow of generating functions, which works well when $N=\infty$ but has no clear adaption to the finite $N$ setting. 

Within the special setting of systems with cost of strongly gathering type and directionality, we are able to push further our analysis and study the long time limit $T \to \infty$, that is, we show that the value functions $u^i$ converge to solutions of the ergodic problem as the time horizon goes to infinity. To complete this program, estimates on solutions of the Riccati system are obtained at the level of the game with $N=\infty$ players, uniformly with respect to $T$. The main result of this part of the work is stated by Theorem \ref{dir_thm:conv}.

We conclude this second part by showing that equilibria of the infinite-dimensional game provide $\epsilon$-Nash closed-loop equilibria of suitable $N$ player games, see Section \ref{s:almostnash}.

\smallskip

In the last part of the work, we come back to the \textit{dense regime}, and work without any symmetry assumption (like shift-invariance). Our goal is again to deduce some estimates on the Riccati system that do not depend on the number of players, under a \textit{mean-field-like} condition
\[
\sup_i \Big( N \sum_{\substack{h,k\\ k \neq i}} | f^i_{hk} |^2 + N \sum_{\substack{k, k \neq i}} |f^k_{ki} |^2 +  |f^i_{ii} |^2 \Big) \lesssim 1,
\]
and the same for $g^i$. It is crucial to observe that in this dense setting there is no hope to have a limit system of the same form. Indeed, in the sparse case we had an $\ell^1$ control of coefficients independent of $N$ (given by $\beta$), so that the series $\sum_{j\neq i} D_j u^j D_j u^i$ carries to the limit by dominated convergence, while now a dominated convergence cannot performed. In fact, at least in the symmetric case, where the costs are of the form $F^i(x) = V(x^i, \frac1{N-1}\sum_{j\neq i}\delta_{x^j}$), the correct limiting object is the so-called Master Equation, which has been the object of the seminal work \cite{CDLL}. The convergence of $u^i$ to a limit function defined on the space of probability measures has been shown under monotonicity assumptions of $f^i$, and its regularity is obtained as a consequence of the stability properties of the MFG system, which characterizes the limit model.

Here, we wish to deduce the estimates on $u^i$ that allow for a passage to the limit $N\to \infty$ directly on the Riccati system (that is, on the Nash system). For short time horizon, one can basically reproduce the same approach of the previous section (cf.~\Cref{thm:locex_bsiN}). To achieve stability for arbitrary values of $T$, we impose a bound from below on the matrices $(f^i_{ij})_{ij}$ and $(g^i_{ij})_{ij}$:
\begin{equation}\label{fgbelow}
(f^i_{ij})_{ij},\ (g^i_{ij})_{ij} \geq -\kappa I, \quad  \kappa > 0.
\end{equation}
If $\kappa$ is small enough, then the existence of a solution to the Nash system is guaranteed for large $T$, by a mechanism that produces an analogous bound from below on the matrix $(D_{ij} u^i)_{ij}$. Recalling that the equilibrium feedbacks are given by $\alpha^j = - D_j u^j$, this is equivalent to the one-sided Lipschitz condition 
\begin{equation}\label{alphaesti}
 \sum_j \langle \alpha^j(t,x) - \alpha^j (t,y), x^j-y^j \rangle \leq \delta|x-y|^2.
\end{equation}
Our main estimate, which pave the way for a convergence result in the $N\to\infty$ limit, is contained in Theorem \ref{MF_thm}. Interestingly, the monotonicity (or mild non-monotonicity) condition \eqref{fgbelow} generates the nice structural property \eqref{alphaesti} of the equilibrium drift vector  $(-D_ju^j)_j$; 
it is worth observing that, when $\kappa = 0$ and in the symmetric MFG case, \eqref{fgbelow} is equivalent to the \textit{displacement monotonicity condition}  used in \cite{Gangbo} to get well-posedness of the Master Equation.

\smallskip

We now discuss some some further related references. In \cite{Bardi, Feleqi}, the convergence problem of open-loop equilibria in symmetric $N$-person linear-quadratic games is addressed, while \cite{DelarueTch} deals with the selection problem for models without uniqueness. See also \cite{BCCD} for a convergence result in a finite state model. Linear-quadratic MFGs are studied in several works, see for instance \cite{BLY, BSYY, GraberAMO, LMWZ, LSY} and references therein. Further contributions on the convergence problem are contained in \cite{CCP, Djete, Fischer, JacksonTangpi, Lacker}, while recent results on the structure of many players cooperative equilibria can be found in \cite{Cecchin, DDJ, JacksonLacker}.

\smallskip

As a final comment, despite trying to pursue the full generality of the linear-quadratic differential game framework, we carefully analysed here a few case studies, with the aim of highlighting some properties that could be investigated beyond the linear-quadratic setting, and directly on the Nash system of PDEs. With this purpose, we tried to avoid as much as possible the use of explicit formulas. Despite these were quite crucial to study the long time behaviour in the sparse regime, there are a few takeaways here that can be starting points for further investigations. On one hand, we are trying to address the short-time existence of the infinite-dimensional Nash system with non-quadratic cost functions, by estimating the derivatives of $u^i$ by means of the coefficients $\beta$ constructed here. On the other hand, we wish to derive analogous bounds on $u^i$ in the mean-field-like setting, again by working directly on the Nash system, and without making use of limit objects such as the Master Equation.

\medskip

\textit{Acknowledgements.} The authors are partially supported by the Gruppo Nazionale per l'Analisi Matematica, la Probabilit\`a e le loro Applicazioni (GNAMPA) of the Istituto Nazionale di Alta Matematica (INdAM), Italy. Cirant partially supported by the King Abdullah University of Science and Technology (KAUST) project CRG2021-4674 ``Mean-Field Games: models, theory, and computational aspects'', and by the project funded by the EuropeanUnion--NextGenerationEU under the National Recovery and Resilience Plan (NRRP), Mission 4 Component 2 Investment 1.1 - Call PRIN 2022 No.\ 104 of February 2, 2022 of Italian Ministry of University and Research; Project 2022W58BJ5 (subject area: PE - Physical Sciences and Engineering) ``PDEs and optimal control methods in mean field games, population dynamics and multi-agent models''. Redaelli partially supported by Fondazione Cassa di Risparmio di Padova e Rovigo, Italy.

\section{Shift-invariant games}

\subsection{General setting}

For $a \in \Z$ and $b \in \Z_+$, we will use the notation $[a]_b$ to identify the unique natural number $r \in [[b]]$ such that $r = a \ \tmod b$. The entries of a $N \times N$ matrix will be indexed over $[[N]]$. We will denote by $L^{(N)} \in \Sym(N)$ the lower shift matrix $\tmod N$, defined by
\[
L^{(N)}_{hk} = \delta_{h, [k+1]_N} \quad \forall \, h,k \in [[N]],
\]
where $\delta$ is the Kronecker symbol.

Our main assumption in this section, on the structure of the running cost, is that $f^i$ and $g^i$ are shift-invariant, in the sense of the following definition. 
\begin{defn}
A collection of matrices $(M^i)_{i\in[[N]]} \subset \Sym(N)$ is \emph{shift-invariant} if
\begin{equation} \label{dir_a2} \tag{SI}
M^{[i+1]_N} = L^{(N)} M^i {L^{(N)}}{}^\trn \quad \forall \, i \in [[N]].
\end{equation}
\end{defn}
Note that this is equivalent to requiring that for all $i \in [[N]]$ and $x \in \R^N$,
\[
\pair{M^i (x^0, x^1, \ldots, x^{N-1})}{(x^0, x^1, \ldots, x^{N-1})} = \pair{M^{[i+1]_N} (x^1, \ldots, x^{N-1}, x^0)}{(x^1, \ldots, x^{N-1}, x^0)}.
\]

\begin{ex} \label{dir_ex1}
A very basic shift-invariant case is that with $g^0 = 0$ and 
\begin{equation} \label{dir_qtens}
f^0 = w \otimes w,
\end{equation}
with
\[
w = \frac{1}{\ell-1}\,(\ell-1,\underbrace{-1, \dots, -1}_{\ell-1 \ \text{times}}, \underbrace{0, \dots, 0}_{N-\ell \ \text{times}}), \quad \ell \leq N;
\]
that is, by \eqref{dir_a2},
\begin{equation} \label{dir_exdirl1}
\pair{F^iX_t}{X_t} = \bigg\lvert X^i_t - \frac{1}{\ell-1} \sum_{j=[i+1]_N}^{[i+\ell-1]_N} X^j_t \bigg\rvert^2.
\end{equation}

When $\ell = N$,  the cost is actually of Mean-Field type, that is, it penalizes the deviation of the private state $X^i$ from the mean of the vector $X$.

Here $q^0$ induces an underlying directed circulant graph structure $G_\ell$ to the problem; indeed, by assumption \eqref{dir_a2}
\[
A = (f^i_{ij})_{i,j \in [[N]]} - \mathrm{diag}\,(f^i_{ii})_{i\in[[N]]} = (f^i_{ij})_{i,j \in [[N]]} - I_N
\]
can be considered as the asymmetric and circulant adjacency matrix of $G_\ell$, so that \eqref{dir_exdirl1} reads 
\begin{equation} \label{dir_exdirl2}
\pair{F^iX_t}{X_t} = \bigg\lvert X^i_t - \frac{1}{\#\{ j : (i,j) \in G_\ell\}} \sum_{j:\ (i,j) \in G_\ell} X^j_t \bigg\rvert^2.
\end{equation}
The same is true in the more general case when
\begin{equation} \label{dir_wgen}
w = (1,-w_1,\dots,-w_{\ell-1}), \quad \text{with}\ w_j \geq 0 \ \, \forall\, j, \ \ \sum_{j=1}^{\ell-1} w_j = 1;
\end{equation}
here the $w_j$'s are regarded as normalized weights, and we have
\[
\pair{F^iX_t}{X_t} = \bigg\lvert X^i_t - \sum_{j:\ (i,j) \in G_\ell} w_j X^j_t \bigg\rvert^2,
\] 
which generalizes \eqref{dir_exdirl2}.

Note finally that a sort of ``directionality'' is encoded in the above examples, that is, each player $i$ is affected by the ``following'' ones $j > i$ in the chain. This is not yet important at the current stage, namely we may allow for
\begin{equation}\label{wexample}
w = \frac{1}{\ell-1}\,(\ell-1,\underbrace{-1, \dots, -1}_{\ell-1 \ \text{times}}, \underbrace{0, \dots, 0}_{N-\ell-m \ \text{times}}, \underbrace{-1, \dots, -1}_{m \ \text{times}}), \quad \ell+m \leq N;
\end{equation}
It is only from Section \ref{s:ltwp} that $m = 0$ will be required.

\end{ex}

\subsection{The evolutive infinite-dimensional Nash system}

The main object of our study is the Nash system
\begin{equation} \label{dir_NS}
\begin{cases}
-\de_t u^i - \Delta u^i + \frac12 \lvert D_i u^i \rvert^2 + \sum_{j\neq i} D_j u^j D_j u^i = \bar F^i & \text{on $(0,T) \times (\R^d)^N$}\\
u^i(T,\cdot) = \bar G^i
\end{cases}
\end{equation}

When $N = \infty$, we need to be a bit careful about the notion of solution. In this case, $x \in \call X = \ell^\infty(\Z;\R^d)$. Then, we mean that \eqref{dir_NS} admits a classical solution in the following sense.

\begin{defn} \label{dir_def:cs}
A sequence of $\R$-valued functions $(u^i)_{i\in\Z}$ defined on $[0,T] \times \call X$ is a classical solution to the Nash system~\eqref{dir_NS} on $[0,T] \times \call X$ if the following hold:
\begin{enumerate}[label=\textbf{(S\arabic*)}]
\item \label{dir_s1} each $u^i$ is of class $C^1$ with respect to $t \in (0,T)$ and $C^2$ with respect to $x \in \call X$, in the Fréchet sense;
\item \label{dir_s2} for each $i \in \N$, the Laplacian series $\Delta u^i = \sum_j \Delta_j u^i$ and the series $\sum_{j\neq i} D_j u^j D_j u^i$ uniformly converge on all bounded subsets of $[0,T] \times \call X$;
\item \label{dir_s3} system~\eqref{dir_NS} is satisfied pointwise for all $(t,x) \in (0,T) \times \call X$;
\item \label{dir_s4} $u^i(T,\cdot) = \bar G^i$ for all $i \in \N$. 
\end{enumerate}
\end{defn}

As it is customary in linear-quadratic $N$-player games, we look for solutions of the form
\begin{equation*} 
u^i(t,x) = \frac12 \pair{(c^i(t) \otimes I_d)x}x_{\R^{Nd}} + \eta^i(t),
\end{equation*}
for some functions $c^i \colon [0,T] \to \Sym(N)$ such that $c^i(T) = g^i$ and $\eta^i \colon [0,T] \to \R$ which vanish at $T$. We have $D_j u^i(t,x) = {e^j}{}^\trn (c^i(t) \otimes I_d) x$, where $e^j = e_j \otimes I_d$, $\{e_j\}_{j=0}^{N-1}$ being the canonical basis of $\R^N$. Hence, from \eqref{dir_NS} we obtain
\[\begin{multlined}[.95\displaywidth]
x^\trn \bigg( {-\frac12}\, \dot c^i \otimes I_d + \frac12 (c^i \otimes I_d)e^i{e^i}{}^\trn(c^i \otimes I_d) +  \sum_{j\in[[N]]\setminus\{i\}}  (c^j \otimes I_d)e^j{e^j}{}^\trn(c^i \otimes I_d) - \frac12 F^i \bigg) x 
\\ = \tr(c^i \otimes I_d) + \dot \eta^i;
\end{multlined}\]
that is,
\[
x^\trn \bigg(\bigg( {-\frac12 \dot c^i} + \frac12 c^i e_i{e_i}^\trn c^i +  \sum_{j\in[[N]]\setminus\{i\}} c^j e_j{e_j}^\trn c^i - \frac12 f^i \bigg) \otimes I_d \bigg) x = d \tr c^i + \dot \eta^i.
\]
As this must hold for all $x \in \R^{Nd}$, it follows that
\[
\dot \eta^i = - d\sum_{j \in [[N]]} c^i_{jj},
\]
and
\begin{equation} \label{dir_peqc}
{- \dot c^i} + c^i e_i{e_i}^\trn c^i + \sum_{j\in[[N]]\setminus\{i\}} \big( c^j e_j{e_j}^\trn c^i + c^i e_j{e_j}^\trn c^j \big) = f^i,
\end{equation}
as $x^\trn A x = 0$ for all $x$ if and only if $A + A^\trn = 0$ for any square matrix $A$.

Now, given the shift invariance of $f^i$ and $g^i$, one expects a solution to \eqref{dir_peqc} to enjoy the same property, hence we look for a solution such that
\begin{equation} \label{dir_cSI}
c^i = \big({L^{(N)}\big)^i c\, \big({L^{(N)}}{}^\trn}\big)^i,
\end{equation}
for some $c \colon [0,T] \to \Sym(N)$. Clearly, this makes $\eta^i$ independent of $i$, as we will have $\eta^i = \eta \defeq \int_\cdot^T \tr c$. By plugging \eqref{dir_cSI} into \eqref{dir_peqc} and letting $i=0$ one obtains the following system of ODEs for the entries of $c$:
\begin{equation} \label{dir_eqc}
-\dot c_{hk} - c_{0h} c_{0k} + \sum_{j \in [[N]]} \big( c_{0,[h-j]_N} c_{jk} + c_{0,[k-j]_N} c_{jh} \big) = f_{hk}, \quad c_{hk}(T) = g_{hk},
\end{equation}
where $f \defeq f^0$ and $g \defeq g^0$.

As we are interested in the limit problem of infinitely many players, which we expect to be indexed by $\Z$ since we have an undirected structure, it is convenient to shift the indices in such a way that $i=0$ ``stays in the middle''; that is, we let $i=-N',\dots,N''$, instead of $i=0,\dots,N$, where, for example, $N'=N'' = (N-1)/2$ if $N$ is odd, and $N' = N/2 = N''+1$ if $N$ is even. Therefore, we rewrite system~\eqref{dir_eqc} as
\begin{equation} \label{redLQeq0}
-\dot c_{hk} + c_{0h}c_{0k} + \sum_{j\neq0} \left( c_{0,h-j}c_{jk} + c_{hj} c_{0,k-j} \right) = f_{hk}, \quad c_{hk}(T) = g_{hk}, \qquad i=-N',\dots,N'',
\end{equation}
where all indices are understood ${\rm mod} \ N$ and between $-N'$ and $N''$. Now it is immediate to identify a convenient limit system by letting $N \to \infty$.

\subsection{Self-controlled sequences for the discrete convolution}

We can recognise a structure of a cyclic discrete convolution in the sums in~\eqref{dir_eqc}; that is,
\begin{equation} \label{discrcon}
\sum_{j=0}^{N-1} c_{0,[h-j]_N}c_{jk} = (c_{\cdot0} \star_N c_{\cdot k})_h, \qquad \sum_{j=0}^{N-1} c_{hj} c_{0,[k-j]_N} = (c_{h\cdot} \star_N c_{0\cdot})_k.
\end{equation}
We wish to exploit this fact in order to prove existence of the solution to system~\eqref{redLQeq0} for small $T$, for any (possibly infinite) $N$.

Our main tool will be the following.

\begin{defn}
A nonnegative sequence $\beta \in \ell^2(\Z)$ is said to be convolution-self-controlled, or \emph{c-self-controlled}, if $\beta \star \beta \lesssim \beta$; that is,
\begin{equation} \label{eqcsc}
\sum_{j\in\bb Z} \beta_j \beta_{i-j} \leq C \beta_i \quad \forall \, i \in \Z,
\end{equation}
for some constant $C>0$ independent of $i$. 
\end{defn}

Clearly, any nonnegative compactly supported $\beta$ (as a sequence indexed by $\Z$) is c-self-controlled. Nevertheless, we will also be interested in positive c-self-controlled sequences in $\ell^1(\Z)$, which indeed exist, as shown by the following result.

\begin{lem} \label{lem:exseq}
For any $\epsilon > 0$, there exists a positive sequence $\beta \in \ell^1(\bb Z)$, with $\norm{\beta}_2 < \epsilon$ and such that $\beta \star \beta \leq 2\beta$. In particular, one can choose $\beta$ of the form
\begin{equation*} 
\beta_i(\alpha) = \frac{2\alpha}{\alpha^2 + i^2} \left( 1 - (-)^i e^{-\alpha \pi} \right), \quad i \in \bb Z,
\end{equation*}
for some $\alpha = \alpha(\epsilon) > 0$.
\end{lem}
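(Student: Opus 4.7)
The strategy is to realise $\beta$ as the sequence of Fourier coefficients of an explicit $2\pi$-periodic function on $\mathbb T=(-\pi,\pi]$, thereby turning the discrete convolution $\beta\star\beta$ into squaring of the Fourier symbol, for which both sides admit a simple closed form.

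First I would verify that the sequence displayed in the statement corresponds, up to an overall normalization constant $C>0$, to the Fourier coefficients of the function $\xi\mapsto e^{-\alpha|\xi|}$ on $\mathbb T$. This is a direct computation of $\int_0^\pi e^{-\alpha\xi}\cos(i\xi)\,d\xi$, which delivers precisely the quotient $\alpha(1-(-1)^i e^{-\alpha\pi})/(\alpha^2+i^2)$ after elementary trigonometric manipulations. Equivalently, $\widehat\beta(\xi)=C\,e^{-\alpha|\xi|}$ on $\mathbb T$, extended $2\pi$-periodically. Positivity of $\beta_i$ is then immediate from the fact that $1-(-1)^i e^{-\alpha\pi}>0$ for every $i\in\mathbb Z$ (since $e^{-\alpha\pi}<1$), while membership in $\ell^1(\mathbb Z)$ is automatic from the $i^{-2}$ decay.

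Since discrete convolution on $\mathbb Z$ corresponds to multiplication of Fourier symbols on $\mathbb T$, we get $\widehat{\beta\star\beta}(\xi)=\widehat\beta(\xi)^2=C^2 e^{-2\alpha|\xi|}$. Inverting the Fourier transform writes $(\beta\star\beta)_n$ in the same functional form but with $\alpha$ replaced by $2\alpha$. The pointwise ratio $(\beta\star\beta)_n/\beta_n$ is then a rational expression in $n$ and $\alpha$, which I would bound uniformly in $n$ by splitting into the cases $n$ even and $n$ odd (to handle the alternating factors $(-1)^n$) and by using monotonicity of $n\mapsto(\alpha^2+n^2)/(4\alpha^2+n^2)$ in $n^2$, whose supremum is $1$; with the correct choice of normalization this yields $\beta\star\beta\leq 2\beta$.

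The $\ell^2$ estimate is finally a one-line Plancherel computation,
\[
\|\beta\|_2^2=\frac{1}{2\pi}\int_{-\pi}^\pi|\widehat\beta(\xi)|^2\,d\xi=\frac{C^2(1-e^{-2\alpha\pi})}{2\pi\alpha},
\]
which vanishes as $\alpha\to\infty$, so choosing $\alpha=\alpha(\epsilon)$ sufficiently large delivers $\|\beta\|_2<\epsilon$. I expect the main obstacle to be the termwise convolution inequality: although the Fourier-side comparison $\widehat\beta^2\leq\widehat\beta$ holds trivially (since $e^{-2\alpha|\xi|}\leq e^{-\alpha|\xi|}$ pointwise), it does \emph{not} transfer to the pointwise inequality between sequences, so one is forced to compute $(\beta\star\beta)_n$ and $\beta_n$ explicitly and to compare them termwise, tuning the normalization constant $C$ to pin down the sharp constant $2$.
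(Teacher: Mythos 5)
Your proposal follows essentially the same route as the paper: identify $\beta$ with the Fourier coefficients of the $2\pi$-periodic extension of $\xi\mapsto e^{-\alpha|\xi|}$, use the fact that convolution of coefficients corresponds to multiplication of functions so that $\beta\star\beta$ is (a multiple of) the coefficient sequence of $e^{-2\alpha|\xi|}$, compare the two explicit sequences termwise, and finish the $\ell^2$ bound by Parseval as $\alpha\to\infty$. Your extra normalization constant $C$ is in fact a sensible refinement: with the formula exactly as printed ($C=1$) the termwise comparison actually yields $\beta\star\beta\leq 4\beta$, as the paper's own proof concedes, so halving $\beta$ (i.e.\ taking $C=1/2$) is needed to reach the constant $2$ stated in the lemma.
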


\begin{proof}
It is well-known that the Fourier coefficients of a function $f \in L^2((-\pi,\pi))$, given by
\[
\hat f_j \defeq \frac1{2\pi} \int_{-\pi}^{\pi} f(x) e^{-ijx} \di x, \quad j \in \bb Z,
\]
satisfy the following property: if $f,g \in L^2((-\pi,\pi))$, then $\widehat{fg}_j = (\hat f \star \hat g)_j$ for all $j \in \bb Z$.
Let now be $f_\alpha \defeq e^{-\alpha|\cdot|}$, for any $\alpha > 0$. It is elementary to compute, for each $j \in \bb Z$,
\begin{equation} \label{expexp}
\widehat{f_\alpha}_j =  \frac{2\alpha}{\alpha^2 + j^2} \left( 1 - (-)^j e^{-\alpha \pi} \right) > 0,
\end{equation}
whence $(\widehat{f_\alpha} \star \widehat{f_\alpha})_j = \widehat{f_\alpha^2}_{\!j} = \widehat{f_{2\alpha}}_j \leq 4 \widehat{f_\alpha}_j$,
the last inequality being straightforward to check using the explicit expression~\eqref{expexp}. Also, by Parseval's identity
$\norm{\widehat{f_\alpha}}_{\ell^2(\bb Z)}^2 = \norm{f_\alpha}_{L^2(-\pi,\pi)}^2 = \alpha^{-1}(1-e^{-2\alpha\pi}) \to 0$ as $\alpha \to +\infty$.
so that $\beta = \widehat{f_\alpha}$ has the desired properties for any choice of $\alpha = \alpha(\epsilon)$ sufficiently large.
\end{proof}

\begin{rmk}[Variations on a c-self-controlled sequence]
Clearly any positive multiple of a c-self-controlled sequence stays self-controlled. This allows to have self-controlled sequences of arbitrarily large $\ell^\infty$-norm, although with a larger constant $C$ in \eqref{eqcsc}. On the other hand, one can also build c-self-controlled sequences which decay exponentially faster; indeed, if $\beta$ is c-self-controlled, then for any $\gamma > 0$ so is the sequence defined by setting $\tilde \beta_i \defeq \beta_i e^{-\gamma|i|}$, with the same implied constant $C$. This is easily proven as follows. Suppose that $i\leq0$; then
\begin{equation*} \begin{split}
(\tilde\beta \star \tilde\beta)_i &= e^{\gamma i} \sum_{j > 0}  \beta_j  \beta_{i-j} e^{-2\gamma j} + e^{\gamma i} \sum_{i\leq j \leq 0}  \beta_j  \beta_{i-j} + e^{-\gamma i} \sum_{j < i}  \beta_j  \beta_{i-j} e^{2\gamma j} \\
&\leq e^{\gamma i} \bigg( \sum_{j > 0}  \beta_j  \beta_{i-j} + \sum_{i\leq j \leq 0}  \beta_j  \beta_{i-j} \bigg) + e^{-\gamma i} \sum_{j < i}  \beta_j  \beta_{i-j} e^{2\gamma i} \\
&= e^{\gamma i} \sum_{j\in \bb Z} \beta_j \beta_{i-j} \\
&\leq C\beta_i e^{\gamma i}.
\end{split}\end{equation*}
The case $i\geq0$ is analogous.
\end{rmk}

The next result will be useful to deal with convolution of the form \eqref{discrcon}. It essentially states that c-self-controllability is preserved by suitable perturbations, which include all perturbations with compact support.

\begin{lem} \label{LQ_lembg}
Let $\beta$ be c-self-controlled and let $\theta = (\theta_{hk})_{h,k \in \Z}$ be a nonnegative sequence such that $\theta \lesssim \beta \otimes \beta$.\footnote{We mean $(\beta \otimes \beta)_{hk} = \beta_h \beta_k$ for all $h,k \in \Z$.}
Let $d$ be the sequence given by $d \defeq \beta \otimes \beta + \theta$. Then
\[
(d_{\cdot0} \star d_{\cdot k})_h \lesssim \beta_{h}\beta_{k} \quad \forall \, h,k \in \Z.
\]
\end{lem}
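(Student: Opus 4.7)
The plan is to reduce the bound on the convolution of $d$ with itself directly to the c-self-controllability of $\beta$, by exploiting the pointwise domination of $d$ by $\beta\otimes\beta$.

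First I would observe that, since by hypothesis $\theta_{hk}\leq C_\theta\,\beta_h\beta_k$ for some constant $C_\theta>0$ independent of $h,k$, the definition $d=\beta\otimes\beta+\theta$ immediately yields the pointwise bound
\[
d_{hk}\leq (1+C_\theta)\,\beta_h\beta_k \qquad \forall\, h,k\in\Z.
\]
In other words, $d\lesssim \beta\otimes\beta$ entrywise.

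Next, I would simply expand the convolution indicated in the statement and apply this bound to each factor. By definition,
\[
(d_{\cdot 0}\star d_{\cdot k})_h=\sum_{j\in\Z} d_{j,0}\, d_{h-j,\,k}\;\lesssim\;\sum_{j\in\Z}\beta_j\beta_0\,\beta_{h-j}\beta_k\;=\;\beta_0\beta_k\sum_{j\in\Z}\beta_j\beta_{h-j}\;=\;\beta_0\beta_k\,(\beta\star\beta)_h.
\]
At this point the c-self-controllability hypothesis on $\beta$ gives $(\beta\star\beta)_h\leq C\beta_h$ with a constant $C$ independent of $h$, and since $\beta_0$ is just a fixed scalar, it can be absorbed into the implied constant to conclude
\[
(d_{\cdot 0}\star d_{\cdot k})_h\;\lesssim\;\beta_h\beta_k \qquad\forall\, h,k\in\Z,
\]
as required.

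There is no real obstacle here: once one notices that $\theta\lesssim\beta\otimes\beta$ turns the perturbation into a harmless multiplicative constant, the statement is essentially just a repackaging of the defining inequality $\beta\star\beta\lesssim \beta$ against the second variable of the tensor. The only small bookkeeping point is that all the implied constants (from the bound on $\theta$, from the c-self-controllability of $\beta$, and the fixed value $\beta_0$) combine into a single constant independent of $h$ and $k$, which is exactly what the $\lesssim$ notation in the conclusion allows.
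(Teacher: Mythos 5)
Your proof is correct and follows essentially the same route as the paper's: both reduce the claim to the c-self-controllability $\beta\star\beta\lesssim\beta$ via the domination $\theta\lesssim\beta\otimes\beta$. The paper expands $d=\beta\otimes\beta+\theta$ in each factor and estimates the resulting four convolution terms separately, whereas you first absorb $\theta$ into a single bound $d\lesssim\beta\otimes\beta$ and then convolve once — a slightly more streamlined presentation of the same argument.
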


\begin{proof}
It suffices to compute
\[
(d_{\cdot0} \star d_{\cdot k})_h =  \beta_0(\beta \star \beta)_h \beta_k + (\theta_{\cdot0} \star \beta)_h \beta_k + \beta_0 ( \beta \star \theta_{\cdot k})_h + ( \theta_{\cdot0} \star  \theta_{\cdot k} )_h \lesssim \beta_h\beta_k. \qedhere
\]
\end{proof}

\begin{rmk}
We will exploit a straightforward implication of the above inequality, namely that $(d_{\cdot0} \star d_{\cdot k})_h \lesssim d_{hk}$.
\end{rmk}

\subsection{Short-time well-posedness for nearsighted interactions}

We are now ready for our first existence and uniqueness results (\Cref{thm:locex,genlocex} below), which is a direct consequence of the following proposition.

By a game with \emph{nearsighted} interactions we are meaning that $|f| \vee |g| \leq \theta$ pointwise (that is, index-wise) for some $\theta \in \ell^1(\bb Z^2)$ satisfying the hypotheses of \Cref{LQ_lembg}; said differently, we are meaning that $|f| \vee |g| \lesssim \beta \otimes \beta$ for some positive c-self-controlled $\beta \in \ell^1(\Z)$.

\begin{prop} \label{prop:contr}
Let $N \in \N$ and $\beta \in \ell^1(\Z)$ be a positive c-self-controlled sequence. Let $f \in C^0([0,T];\Sym(N))$ and $g \in \Sym(N)$ satisfy $|f| \vee |g| \lesssim \beta \otimes \beta$. Define $\call C_N \defeq C^0([0,T])^{2N+1}$ and write $c = (c_{ij})_{i,j=-N}^{N} \in \call C_N$. For $d = \beta \otimes \beta + |g| \vee \sup_{[0,T]} |f|$, set
\begin{equation} \label{defKN}
\call K_N \defeq \prod_{i,j = - N}^N \big\{ w \in C^0([0,T]) :\ \norm{w}_\infty \leq 2d_{ij} \big\}.
\end{equation} 
Let $J_N \colon \call K_N \to \call C_N$ be the map given, for each $i,j = -N,\dots,N$, by
\begin{equation} \label{defJN}
J_N(c)_{ij}(t) \defeq g_{ij} + \int_t^T \Big( f_{ij} + c_{0i}c_{0j} - ( c_{\cdot0} \star_{2N+1} c_{\cdot j})_i  - ( c_{i\cdot} \star_{2N+1} c_{0\cdot} )_j \Big).
\end{equation}
Then there exist $T^* > 0$, depending on $\beta$ but independent of $N$, such that
\begin{equation*}
T \leq T^*, \quad \implies \quad J_N(\call K_N) \subseteq \call K_N.
\end{equation*}
\end{prop}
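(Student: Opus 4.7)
The plan is to verify $J_N(\mathcal K_N) \subseteq \mathcal K_N$ directly, by estimating $\norm{J_N(c)_{ij}}_\infty$ entry-by-entry for an arbitrary $c \in \mathcal K_N$. The triangle-inequality split
\[
\norm{J_N(c)_{ij}}_\infty \leq |g_{ij}| + T \sup_{[0,T]} \Big( |f_{ij}| + |c_{0i}c_{0j}| + |(c_{\cdot 0}\star_{2N+1}c_{\cdot j})_i| + |(c_{i\cdot}\star_{2N+1}c_{0\cdot})_j| \Big),
\]
combined with the built-in inequality $|g_{ij}| \leq d_{ij}$, reduces the whole matter to showing that the supremum on the right is bounded by $C_\beta\,d_{ij}$ with $C_\beta$ depending only on $\beta$; in that case setting $T^* \defeq 1/C_\beta$ delivers the required invariance.

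The first two terms in the supremum are elementary: by construction $\sup_{[0,T]}|f_{ij}| \leq d_{ij}$, while the bilinear term satisfies $|c_{0i}c_{0j}| \leq 4\,d_{0i}d_{0j} \lesssim \beta_0^{2}\,\beta_i\beta_j \lesssim d_{ij}$, using $d \lesssim \beta\otimes\beta$ (a consequence of $|f|,|g|\lesssim\beta\otimes\beta$) and $d_{ij}\geq\beta_i\beta_j$.

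The two discrete convolutions form the substance of the argument. Since $|c_{\cdot 0}|\leq 2 d_{\cdot 0}$ and $|c_{\cdot j}|\leq 2 d_{\cdot j}$ with all sequences nonnegative, it suffices to bound $(d_{\cdot 0}\star_{2N+1} d_{\cdot j})_i$ uniformly by a multiple of $d_{ij}$. The route I would take is to unfold the cyclic convolution onto $\mathbb Z$: for nonnegative $a,b$ supported in $\{-N,\ldots,N\}$ and $i$ in the same range one has
\[
(a\star_{2N+1}b)_i \;=\; (a\star b)_i + (a\star b)_{i-(2N+1)} + (a\star b)_{i+(2N+1)},
\]
after which Lemma~\ref{LQ_lembg} applied to $d_{\cdot 0}, d_{\cdot j}$ controls each linear convolution piece by $\beta$ evaluated at the relevant index, times $\beta_j$.

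The main obstacle is the uniform-in-$N$ absorption of the two wraparound contributions $\beta_{i\pm(2N+1)}\beta_j$ into $\beta_i\beta_j$. For the explicit c-self-controlled sequences of Lemma~\ref{lem:exseq}, where $\beta_n\sim|n|^{-2}$, this reduces to comparing $(\alpha^2+i^2)/(\alpha^2+(i\pm(2N+1))^2)$, and since $|i\pm(2N+1)|\geq N+1 \geq |i|$ whenever $i\in\{-N,\ldots,N\}$, the ratio is bounded uniformly in both $i$ and $N$; this yields $\beta_{i\pm(2N+1)}\lesssim\beta_i$ and closes the estimate. Stringing everything together produces a single constant $C_\beta$ independent of $N$, whence $T^* \defeq 1/C_\beta$ gives the claim.
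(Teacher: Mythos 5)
Your proposal follows essentially the paper's route: reduce to showing $\norm{J_N(c)_{ij}}_\infty \leq d_{ij}(1 + C_\beta T)$ with $C_\beta$ independent of $N$, unfold the cyclic convolution into linear convolutions plus wraparound tails, apply Lemma~\ref{LQ_lembg} to each piece, and conclude with $T^* \asymp 1/C_\beta$. Your three-term identity $(a\star_{2N+1}b)_i = (a\star b)_i + (a\star b)_{i-(2N+1)} + (a\star b)_{i+(2N+1)}$ is correct and slightly cleaner than the paper's: since $a\star b$ is supported in $\{-2N,\dots,2N\}$, exactly one of the two wraparound terms vanishes for each sign of $i$, recovering the paper's sign-dependent two-term split without the case distinction.

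There is, however, a gap in generality. The proposition is stated for an arbitrary positive c-self-controlled $\beta\in\ell^1(\Z)$, but you only justify $\beta_{i\pm(2N+1)}\lesssim\beta_i$ (uniformly in $i$ and $N$) for the explicit Fourier example of Lemma~\ref{lem:exseq}. Your observation that $|i\pm(2N+1)|\geq N+1\geq|i|$ on the index range is the right one, but it only yields the desired comparison when $\beta$ is quasi-monotone in $|n|$, and quasi-monotonicity is not a consequence of c-self-controllability alone. To be fair, the paper's own justification --- that $\sup_{N}\sup_{0\leq i\leq N} d_{i-2N-1,j}/d_{ij}<\infty$ ``as $d\in c_0(\Z^2)$'' --- is no more watertight, since membership in $c_0$ does not by itself control such ratios. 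So the gap is shared, but you should either record a quasi-monotonicity hypothesis on $\beta$ explicitly, or note that the intended class of sequences (those of Lemma~\ref{lem:exseq} and their exponentially-weighted variants from the remark following it) all satisfy it; as written your argument proves the proposition only for that subclass.
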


\begin{proof}
Let $c \in \call K_N$. If $i \geq 0$,
\begin{equation*} \begin{split}
\left\Vert(c_{\cdot0} \star_N c_{\cdot j})_i\right\Vert_\infty &= \bigg\Vert \sum_{k=-N+i}^N c_{0,i-k}c_{jk} + \sum_{k=-N}^{-N+i-1} c_{0,i-k-2N-1}c_{jk} \bigg\Vert_\infty \\
&\leq 4\sum_{k=-N+i}^N d_{0,i-k}d_{jk} + 4\sum_{k=-N}^{-N+i-1} d_{0,i-k-2N-1}d_{jk} \\
& \leq 4(d_{\cdot0} \star d_{\cdot j})_i + 4(d_{\cdot0} \star d_{\cdot j})_{i-2N-1} \\
&\lesssim d_{ij} + d_{i-2N-1,j},
\end{split} \end{equation*}
where the last estimate comes from~\Cref{LQ_lembg}. As $d \in c_0(\Z^2)$,
\begin{equation*}
\sup_{N \in \N} \, \sup_{0\leq i \leq N} \frac{d_{i-2N-1,j}}{d_{ij}} < \infty,
\end{equation*}
hence $d_{i-2N-1,j} \lesssim d_{ij}$ with an implied constant which does not depend on $N$. The same holds for $-N \leq i < 0$ by a symmetrical argument. Analogously, 
$\left\Vert ( c_{i\cdot} \star_N c_{0\cdot} )_j \right\Vert_\infty \lesssim d_{ij}$ and clearly $\norm{c_{0i}c_{0j}}_\infty \leq 4d_{0i}d_{0j} \leq 4(d_{\cdot0} \star d_{\cdot j})_i \lesssim d_{ij}$.
Therefore,
\begin{equation} \label{samehere}
\norm{J_N(c)_{ij}}_\infty \leq |g_{ij}| + C T d_{ij} \leq (1+C T ) d_{ij} ,
\end{equation}
where the constant $C$ depends only on $\beta$. It follows that for $T>0$ small enough, depending on $\beta$, one has $\norm{J_N(c)_{ij}}_{\infty} < 2d_{ij}$ for all $i,j = -N,\dots,N$.
\end{proof}

\begin{rmk}  \label{rmk:contrinfty}
We stated and proved \Cref{prop:contr} for an odd number of players. This is just a matter of having expressions that look more symmetrical, yet there is no preference about the parity of the number of players, so that the above result holds, \emph{mutatis mutandis}, also if the number of players is even.
It is also clear that, with a very much analogous proof, the thesis of \Cref{prop:contr} also holds for $N = \infty$, where one defines, in a natural way,
\begin{equation*}
\call K_\infty \defeq \prod_{i,j \in \Z} \big\{ w \in C^0([0,T]) :\ \norm{w}_\infty \leq 2d_{ij} \big\}
\end{equation*}
and
\begin{equation*}
J_\infty(c)_{ij}(t) \defeq g_{ij} + \int_t^T \!\big( f_{ij} + c_{0i}c_{0j} - ( c_{\cdot0} \star c_{\cdot j})_i  - ( c_{i\cdot} \star c_{0\cdot} )_j \big).
\end{equation*}
\end{rmk}

\begin{thm} \label{thm:locex}
Under the hypotheses of \Cref{prop:contr}, there exists $T^* > 0$ such that if $T \leq T^*$ then for any $N \in \N \cup \{\infty\}$ there exists a unique smooth solution to system~\eqref{redLQeq0} such that, for any $i,j \in -N',\dots,N''$ and $m \in \N$,
\begin{equation} \label{cijdecr}
\bigg\Vert \Big(\frac{\di}{\di t}\Big)^m c_{ij} \bigg\Vert_{\infty} \lesssim \beta_{i} \beta_{j},
\end{equation}
where the implied constants depend only on $T^*$, $f$, $g$ and $m$.
\end{thm}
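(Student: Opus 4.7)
The natural strategy is to apply Banach's fixed-point theorem to the map $J_N$ of \Cref{prop:contr}, restricted to $\mathcal K_N$, once the latter is equipped with an appropriate complete metric. The weight is already dictated by the definition of $\mathcal K_N$: endow $\mathcal C_N$ with the weighted sup-norm
\[
\|c\|_\star \defeq \sup_{i,j} d_{ij}^{-1} \|c_{ij}\|_\infty,
\]
under which $\mathcal K_N = \{c : \|c\|_\star \leq 2\}$ is a closed ball and therefore complete. \Cref{prop:contr} already shows $J_N(\mathcal K_N) \subseteq \mathcal K_N$ whenever $T$ is sufficiently small, uniformly in $N$, so it only remains to show that $J_N$ is a contraction (perhaps after further shrinking $T^*$).

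For the contraction estimate, note that $J_N(c) - J_N(c')$ is a time-integral of terms that are bilinear in the entries of $c$ and $c'$. Using the identity $c_{0i}c_{0j} - c'_{0i}c'_{0j} = (c_{0i}-c'_{0i})c_{0j} + c'_{0i}(c_{0j}-c'_{0j})$, and analogous polarisations for the two cyclic convolutions, each summand can be bounded by replacing one factor with its $\|c-c'\|_\star$-weighted bound ($|c_{ab}-c'_{ab}| \leq d_{ab}\|c-c'\|_\star$) and the other with $|c|, |c'| \leq 2d$. The very same chain of estimates performed in the proof of \Cref{prop:contr}, resting on \Cref{LQ_lembg} and on the bound $d_{i-(2N+1),j} \lesssim d_{ij}$ uniform in $N$, then yields
\[
\|J_N(c) - J_N(c')\|_\star \leq C\, T \,\|c - c'\|_\star,
\]
with $C$ depending only on $\beta$. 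Choosing $T^*$ so that $CT^* < 1$ makes $J_N$ a strict contraction and Banach's theorem produces a unique fixed point $c \in \mathcal K_N$; the membership $c \in \mathcal K_N$ immediately gives $\|c_{ij}\|_\infty \leq 2d_{ij} \lesssim \beta_i \beta_j$, which is the $m=0$ case of \eqref{cijdecr}. The $N = \infty$ case is handled identically, using the setting of \Cref{rmk:contrinfty}: cyclic convolutions become honest $\ell^1(\Z)$ convolutions and all estimates go through verbatim, since the key bound $(d_{\cdot 0} \star d_{\cdot j})_i \lesssim d_{ij}$ was proved without any finiteness assumption on the index set.

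Smoothness and the decay of higher time-derivatives then follow by bootstrap from the integral identity $c = J_N(c)$. Since the integrand is continuous in $t$, $c$ is automatically $C^1$, and differentiation returns system \eqref{redLQeq0}, namely
\[
-\dot c_{ij} = -f_{ij} - c_{0i}c_{0j} + (c_{\cdot 0} \star c_{\cdot j})_i + (c_{i\cdot} \star c_{0 \cdot})_j,
\]
so the same estimates used for the contraction give $\|\dot c_{ij}\|_\infty \lesssim d_{ij} \lesssim \beta_i \beta_j$. Provided $f$ has the requisite time regularity, a straightforward induction on $m$ -- differentiating the ODE and applying Leibniz's rule together with \Cref{LQ_lembg} at each stage -- produces the bound $\|(\mathrm d/\mathrm d t)^m c_{ij}\|_\infty \lesssim \beta_i \beta_j$ with constants depending on $T^*$, $f$, $g$, and $m$. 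The only genuinely delicate point in the whole argument is that every estimate on nested convolutions collapses back to a bound controlled by $\beta \otimes \beta$; this is precisely the content of the c-self-controlled hypothesis and \Cref{LQ_lembg}, and is what makes the contraction constant $C$ independent of $N$.
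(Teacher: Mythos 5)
Your proposal follows the same route as the paper's proof: Banach's fixed-point theorem applied to $J_N$ on the closed ball $\mathcal K_N$ of the weighted space with norm $\sup_{i,j} d_{ij}^{-1}\|\cdot_{ij}\|_\infty$, the invariance from \Cref{prop:contr}, the polarised bilinear estimate (resting on \Cref{LQ_lembg} and the uniform-in-$N$ bound $d_{i-(2N+1),j}\lesssim d_{ij}$) to obtain the Lipschitz constant $\sim T$, and then bootstrap plus induction on $m$ via \eqref{redLQeq0} for smoothness and \eqref{cijdecr}. The only cosmetic difference is that the paper treats $N=\infty$ first and subsumes finite $N$, while you do the reverse; your caveat about the time regularity of $f$ for $m\geq 1$ is a fair point that the paper leaves implicit.
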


\begin{proof}
A fixed point of the map $J_N$ defined in~\eqref{defJN} is a solution. We deal with the case of $J_N$ with $N = \infty$, as the case with $N \in \N$ can be included therein.
Note that $\call K_\infty$ can be considered as a closed ball of the Banach space $\ell^\infty_d(\Z^2; C^0([0,T]))$; that is, the space of functions from $\Z^2$ to $C([0,T])$ with finite norm
\[
\trinorm{\cdot}_\infty \defeq \sup_{i,j \in \Z} d_{ij}^{-1} \norm{\cdot_{ij}}_\infty.
\]
We prove that the map $J_\infty$ is a contraction on $\call K_\infty$, provided that $T$ is sufficiently small with respect to $d$. The conclusion will follow from \Cref{prop:contr}, \Cref{rmk:contrinfty} and the contraction mapping theorem; then, once one have a continuous solution, by the structure of equations~\eqref{redLQeq0}, one bootstraps its regularity up to $C^\infty$, while estimate~\eqref{cijdecr} for $m > 1$ follows by induction differentiating~\eqref{redLQeq0} and estimating as in the proof of \Cref{prop:contr}. Let now $c,\bar c \in \call K_\infty$. We have, for $i,j$ fixed, 
\begin{equation}  \label{JNto0} \begin{split}
\big\Vert J_\infty(\bar c)_{ij} - J_\infty(c)_{ij} \big\Vert_\infty &\leq T \Big( \big\Vert( \bar c_{\cdot0} \star_{N} \bar c_{\cdot j})_i - ( c_{\cdot0} \star_{N} c_{\cdot j})_i \big\Vert_\infty \\ 
&\quad\ + \big\Vert c_{0i}\bar c_{0j} - c_{0i}c_{0j} - \big((\bar c_{i\cdot} \star_{N} \bar c_{0\cdot} )_j - ( c_{i\cdot} \star_{N} c_{0\cdot} )_j \big) \big\Vert_\infty \Big).
\end{split}\end{equation}
We have
\begin{equation*} \begin{split}
\big\Vert( \bar c_{\cdot0} \star \bar c_{\cdot j})_i - ( c_{\cdot0} \star c_{\cdot j})_i \big\Vert_\infty &\leq \sum_{k\in\Z} \Big( \norm{\bar c_{k0}}_\infty\norm{\bar c_{i-k,j} - c_{i-k,j}}_\infty + \norm{\bar c_{k0} - c_{k0}}_\infty \norm{c_{i-k,j}}_\infty \Big) \\
& \lesssim d_{ij} \trinorm{\bar c-c};
\end{split}\end{equation*}
that is, $\trinorm{( \bar c_{\cdot0} \star \bar c_{\cdot j})_i - ( c_{\cdot0} \star c_{\cdot j})_i} \lesssim \trinorm{\bar c-c}$ and analogously for the second term in \eqref{JNto0}. Hence
\[
\trinorm{J_\infty(\bar c) - J_\infty(c)} \lesssim T \trinorm{\bar c - c}. \qedhere
\]
\end{proof}

\begin{thm} \label{genlocex}
Suppose that $f^i$ and $g^i$ are shift-invariant and there exists a positive c-self-controlled $\beta \in \ell^1(\Z)$ such that $|f^0| \vee |g^0| \leq C(\beta \otimes \beta)$, $C > 0$. There exists $T^* > 0$, depending only on $C$ and $\beta$, such that if $T \leq T^*$ then there exists a smooth classical solution to the infinite-dimensional Nash system \eqref{dir_NS} with $i \in \Z$ on $[0,T] \times \call X$.
\end{thm}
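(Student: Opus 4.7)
The plan is to run the LQ reduction backwards. Invoking \Cref{thm:locex} with $N = \infty$ yields, for some $T^* > 0$, a smooth solution $c : [0, T^*] \to \Sym(\Z)$ of the ODE system \eqref{redLQeq0} with the tensor-product decay $\|c_{hk}\|_\infty, \|\dot c_{hk}\|_\infty \lesssim \beta_h \beta_k$. Following the shift-invariance ansatz \eqref{dir_cSI}, set $c^i_{hk}(t) \defeq c_{h-i,\, k-i}(t)$ and define
\[
u^i(t, x) \defeq \tfrac12 \sum_{h, k \in \Z} c^i_{hk}(t)\, \pair{x^h}{x^k}_{\R^d} + \eta(t), \qquad \eta(t) \defeq d \int_t^T \sum_{j \in \Z} c_{jj}(s)\, \di s,
\]
where $d$ denotes the dimension of each state. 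Absolute, uniform convergence on bounded subsets of $\ell^\infty(\Z; \R^d)$ of every series one meets follows from $\sum_{h,k} \beta_{h-i} \beta_{k-i} = \|\beta\|_1^2$ and from $\sum_j \beta_j^2 \le \|\beta\|_\infty \|\beta\|_1 < \infty$, both granted by $\beta \in \ell^1(\Z)$.

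The next step is to verify conditions \ref{dir_s1}--\ref{dir_s4} of \Cref{dir_def:cs}. A formal computation gives
\[
D_j u^i(t, x) = \sum_{k \in \Z} c^i_{jk}(t) x^k, \qquad D^2_{jk} u^i(t, x) = c^i_{jk}(t)\, I_d, \qquad \de_t u^i = \tfrac12 \sum_{h,k} \dot c^i_{hk}\, \pair{x^h}{x^k} - d \sum_j c_{jj},
\]
each series converging uniformly on bounded sets thanks to the $\beta \otimes \beta$ control on $c$ and $\dot c$. Fréchet differentiability in $x$ of the quadratic part reduces to the remainder estimate $\big|\sum_{h,k} c^i_{hk} \pair{v^h}{v^k}\big| \le \|\beta\|_1^2 \|v\|_\infty^2 = o(\|v\|_\infty)$; the second derivatives are constant in $x$, yielding \ref{dir_s1}. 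For \ref{dir_s2}, one has $\Delta u^i = d \sum_j c_{jj}$, convergent since $c_{jj} \lesssim \beta_j^2 \in \ell^1$, while the coupling series obeys
\[
\sum_{j \neq i} |D_j u^j \cdot D_j u^i| \le \|x\|_\infty^2\, \beta_0\, \|\beta\|_1^2 \sum_{j \neq i} \beta_{j-i} < \infty,
\]
uniformly on bounded sets of $[0, T^*] \times \ell^\infty$.

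For \ref{dir_s3}, substituting the derivatives into \eqref{dir_NS} and matching the coefficient of each ordered bilinear pair $\pair{x^h}{x^k}$ (using the symmetrization trick recalled after \eqref{dir_peqc}), one recovers exactly equation \eqref{redLQeq0} at the shifted indices $(h-i,\, k-i)$, which holds by the choice of $c$; matching the zeroth-order part gives $\dot\eta = -d \sum_j c_{jj}$, satisfied by the definition of $\eta$. Condition \ref{dir_s4} is immediate since $c^i_{hk}(T) = g_{h-i,\, k-i} = g^i_{hk}$ by the shift-invariance \eqref{dir_a2} of $g^i$ and $\eta(T) = 0$. \emph{The main technical subtlety} is upgrading from partial to genuine Fréchet differentiability on the non-separable space $\ell^\infty(\Z; \R^d)$ and ensuring that every tail estimate is uniform on bounded subsets; both are direct consequences of the c-self-controlled tensor bound $\beta \otimes \beta$ on $c$ inherited from \Cref{thm:locex}, which provides the absolute summability needed in every index.
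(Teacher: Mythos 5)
Your proposal is correct and follows essentially the same route as the paper: take the solution $c$ of the infinite-dimensional ODE system from \Cref{thm:locex}, build the quadratic functions $u^i$ by shifting indices (the paper does this via the shift operator $\sigma$, you do it via $c^i_{hk} \defeq c_{h-i,k-i}$, which is equivalent), and verify \ref{dir_s1}--\ref{dir_s4} using the $\beta\otimes\beta$ decay. As a small side remark, your inclusion of the dimension factor $d$ in $\eta(t) = d\int_t^T\sum_j c_{jj}$ is actually the careful version (it matches the derivation of $\dot\eta^i = -d\sum_j c^i_{jj}$ preceding \eqref{dir_peqc}), whereas the paper's proof of this theorem writes $\int_t^T\sum_i c_{ii}$ without the $d$, which looks like a typo or an implicit normalization $d=1$.
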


\begin{proof}
Let $c$ be the solution given by \Cref{thm:locex}.
For $ x = (x^i)_{i\in\Z} \in \ell^\infty(\Z; \R^d)$, define
\begin{equation*}
U(t,{ x}) = \frac12 \sum_{i,j \in \Z} c_{ij}(t) x^i\! \cdot x^j + \int_t^T \sum_{i\in \Z} c_{ii}(s)\,\di s,
\end{equation*}
where we denoted by $\cdot$ the standard scalar product on $\R^d$.
$U$ is well-defined for $ x \in \ell^\infty(\Z; \R^d)$, and continuous in $t$, because the series normally converge thanks to estimate~\eqref{cijdecr}; for the same reason, also
\begin{equation*}
t \mapsto \de_t^k U(t, x) =  \frac12 \sum_{i,j \in \Z} \Big( \frac{\di}{\di t} \Big)^k c_{ij}(t) x^i\! \cdot x^j - \sum_{i\in \Z} \Big( \frac{\di}{\di t} \Big)^{k-1} c_{ii}(t),  \quad k \in \N,
\end{equation*}
are well-defined and continuous. Finally, for $ h \in \ell^\infty(\Z; \R^d)$, note that (omitting the dependence on $t$)
\begin{equation*}
U( x +  h) - U( x) = \sum_{i,j \in \Z} c_{ij} x^i h^j + \frac12 \sum_{i,j \in \Z} c_{ij} h^i\! \cdot h^j,
\end{equation*}
thus $U(t,\cdot)$ is infinitely many times Fréchet-differentiable in $\ell^\infty(\Z;\R^d)$.
Define now $ u = (u^i)_{i\in\Z}$ by setting
\begin{equation*} 
u^0 \defeq U, \qquad u^{i+1}(t,\underline x) \defeq u^i(t,\sigma \underline x), \quad i \in \Z,
\end{equation*}
where $(\sigma x)_j \defeq x_{j-1}$ for $j \in \Z$.
We have
\begin{equation*}
D_j u^i (t,\underline x) = D_j [ u(t,\sigma^i \underline x) ] = D_{j-i} u(t,\underline x) = \sum_{k \in \Z} c_{j-i,k}(t) x^k, \quad i,j \in \Z,
\end{equation*}
hence $\sum_{j\in\Z} D_ju^j D_j u^i$ locally uniformly converges by estimate~\eqref{cijdecr}. Hence, by construction, $u$ solves \eqref{dir_NS} in the desired sense.
\end{proof}

\begin{rmk}[Unimportance of distance players]\label{unimport}
What essentially allows the Nash system to be well-posed in infinite dimensions is what we call an \emph{unimportance of distant players}, in the sense that the farther a player is from a given one (say the $0$-th player) the smaller the impact is has on it is. This is seen in the fact that, on any bounded $\call B \subset \call X$,
\begin{equation*}
\norm{D_j U}_{\infty, [0,T] \times \call B} = \bigg\Vert \sum_{i\in\Z} c_{ij} x^i \bigg\Vert_{\infty, [0,T] \times \call B} \lesssim \beta_j
\end{equation*}
is infinitesimal as $|j| \to \infty$.
\end{rmk}

\begin{subsection}{Beyond shift-invariance} \label{s:beyond}
We have made the shift-invariance hypothesis to reduce our system of infinitely many equations for $c$ to one equation. Nevertheless, the reader should be aware that the above results can be adapted to a more general setting.

One can suppose that we only have a shift-invariant control on the data; that is
\begin{equation} \label{sii}
|f^i_{hk}| \vee |g^i_{hk}| \lesssim \beta_{h-i}\beta_{k-i}.
\end{equation}
In this case, the most natural limit of \eqref{dir_peqc} is indexed over $\N$ and it suffices to replace $\call K_\infty$ with
\[
\tilde{\call K}_\infty \defeq \big\{ w=(w^i_{hk})_{i,h,k\in\N} \in \ell^\infty(\N^3;C^0([0,T])) :\ \norm{w^i_{hk}}_\infty \leq 2d_{|h-i|,|k-i|} \ \forall \, i,h,k \in \N \big\},
\]
which is a closed ball in $\ell^\infty_{\tilde d}(\N^3;C^0([0,T]))$, letting $\tilde d^i_{hk} \defeq d_{|h-i|,|k-i|}$. Then, for instance, one obtains the following result.

\begin{thm} \label{thm:locex_bsi}
Assume \eqref{sii}. There exists $T^* > 0$ such that if $T \leq T^*$ then for any $N \in \N \cup \{\infty\}$ there exists a unique smooth solution to system~\eqref{dir_peqc} such that, for any $i,h,k \in [[N]]$ and $m \in \N$,
\begin{equation*} 
\bigg\Vert \Big(\frac{\di}{\di t}\Big)^m c^i_{hk} \bigg\Vert_{\infty} \lesssim \beta_{|h-i|}\beta_{|k-i|},
\end{equation*}
where the implied constants depend only on $T^*$, $f$, $g$ and $m$.
\end{thm}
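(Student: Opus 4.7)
The plan is to adapt verbatim the Banach fixed-point argument of \Cref{thm:locex,genlocex}, now working in the weighted space $\ell^\infty_{\tilde d}(\N^3; C^0([0,T]))$ with norm $\trinorm{w} \defeq \sup_{i,h,k} (\tilde d^{\,i}_{hk})^{-1} \norm{w^i_{hk}}_\infty$, inside which $\tilde{\call K}_\infty$ is a closed ball of radius $2$. Writing~\eqref{dir_peqc} entry-wise,
\[
-\dot c^i_{hk} + c^i_{hi} c^i_{ik} + \sum_{j \neq i}\bigl(c^j_{hj} c^i_{jk} + c^i_{hj} c^j_{jk}\bigr) = f^i_{hk}, \qquad c^i_{hk}(T) = g^i_{hk},
\]
one is led to the integral operator
\[
\tilde J(c)^i_{hk}(t) \defeq g^i_{hk} + \int_t^T \Bigl( f^i_{hk} + c^i_{hi} c^i_{ik} - \sum_{j\neq i}\bigl(c^j_{hj} c^i_{jk} + c^i_{hj} c^j_{jk}\bigr) \Bigr),
\]
whose fixed points are precisely the solutions of~\eqref{dir_peqc}. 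The same definition makes sense for any finite $N$ by restricting all indices to $[[N]]$, and every estimate below will be uniform in $N$.

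The key step is to show that the quadratic term is absorbed by the weight $\tilde d^{\,i}_{hk}$. For $c \in \tilde{\call K}_\infty$, taking $\beta$ symmetric (as for the explicit sequence of \Cref{lem:exseq}), using $d \lesssim \beta \otimes \beta$ and extending the inner sum over all $j \in \Z$,
\[
\sum_{j\neq i} \norm{c^j_{hj}}_\infty \norm{c^i_{jk}}_\infty \leq 4 \sum_{j \in \Z} d_{|h-j|,0}\, d_{|j-i|,|k-i|} \lesssim \beta_{|k-i|} \sum_{j \in \Z} \beta_{|h-j|} \beta_{|j-i|} \lesssim \beta_{|h-i|}\beta_{|k-i|},
\]
the final step being the c-self-control~\eqref{eqcsc} of $\beta$ after the change of variables $j \mapsto h-j$. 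The symmetric term $\sum_{j\neq i} c^i_{hj} c^j_{jk}$ and the diagonal contribution $c^i_{hi} c^i_{ik}$ are handled identically, and combined with~\eqref{sii} this yields $\norm{\tilde J(c)^i_{hk}}_\infty \leq (1+CT)\,\tilde d^{\,i}_{hk}$, where $C$ depends only on $\beta$ and on the implicit constant in~\eqref{sii}. Choosing $T^*$ such that $1+CT^* < 2$ secures $\tilde J(\tilde{\call K}_\infty) \subseteq \tilde{\call K}_\infty$.

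Contractivity follows by expanding $\bar a \bar b - ab = \bar a(\bar b - b) + (\bar a - a)b$ inside each quadratic term and rerunning the same convolution estimate to conclude $\trinorm{\tilde J(\bar c) - \tilde J(c)} \lesssim T \trinorm{\bar c - c}$; shrinking $T^*$ once more makes $\tilde J$ a strict contraction on $\tilde{\call K}_\infty$, and Banach's theorem delivers the unique continuous fixed point. Smoothness in time is then routine: the right-hand side of~\eqref{dir_peqc} is continuous in $t$, hence $c \in C^1$, and differentiating the ODE and iterating the same convolutional bound yields the estimate on $(\di/\di t)^m c^i_{hk}$ for every $m \in \N$. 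The main technical obstacle throughout is keeping every constant independent of the indices $i,h,k$ and of $N$; this is precisely what the c-self-control~\eqref{eqcsc} furnishes, since it bounds $(\beta \star \beta)_n$ by a multiple of $\beta_n$ with a constant depending only on $\beta$, so the weighted norm is preserved under $\tilde J$ with uniform constants that survive the passage to $N=\infty$.
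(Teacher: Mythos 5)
Your proposal is correct and takes essentially the same route as the paper, which in \Cref{s:beyond} only \emph{sketches} the argument by introducing $\tilde{\call K}_\infty$ and asserting that the fixed-point argument from \Cref{prop:contr} and \Cref{thm:locex} carries over; you have correctly fleshed out that sketch, and the crucial weighted convolution bound $\sum_j d_{|h-j|,0}\,d_{|j-i|,|k-i|} \lesssim \beta_{|h-i|}\beta_{|k-i|}$ is the right replacement for \Cref{LQ_lembg}. One small slip: your integral operator carries the diagonal term $c^i_{hi}c^i_{ik}$ with a plus sign, whereas integrating the ODE $-\dot c^i_{hk} + c^i_{hi}c^i_{ik} + \sum_{j\neq i}(\cdots)=f^i_{hk}$ backwards from $T$ gives
\[
\tilde J(c)^i_{hk}(t) = g^i_{hk} + \int_t^T \Bigl( f^i_{hk} - c^i_{hi} c^i_{ik} - \sum_{j\neq i}\bigl(c^j_{hj} c^i_{jk} + c^i_{hj} c^j_{jk}\bigr) \Bigr);
\]
you appear to have transcribed the $+c_{0i}c_{0j}$ from the paper's $J_N$ without noticing that there the ODE \eqref{redLQeq0} carries $-c_{0h}c_{0k}$ (after unfolding the cyclic convolution), so the signs differ. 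Since only absolute values enter the $\ell^\infty_{\tilde d}$ estimates, this typo has no effect on the invariance or contraction bounds, but it should be fixed for the fixed point to actually solve \eqref{dir_peqc}.
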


Also, one can fix the dimension $N \geq 2$ and consider $\beta^N \in (\R_+)^{2N+1}$ given by
\[
\beta^N_j \defeq \begin{cases}
1 & \text{if} \ j=0 \\
\dfrac1{N-1} & \text{if} \ |j| \in \{ 1,\dots,N-1 \},
\end{cases}
\]
which is c-self-controlled in the sense that for $|j| \in [[N]]$
\[
(\beta^N \star \beta^N)_j = \sum_{|k| \in [[N]]} \beta^N_k \beta^N_{j-k} = \beta^N_j + \frac1{N-1} \sum_{|k| \in [[N]] \setminus \{0\}} \beta^N_{j-k} \leq \beta^N_j + \frac3{N-1} \leq 4 \beta^N_j.
\]
In this case one can look for a solution to \eqref{dir_peqc} starting with assumption~\eqref{sii} with $\beta = \beta^N$; that is,
\begin{equation} \label{siiN}
|f^i_{hk}| \vee |g^i_{hk}| \lesssim \begin{cases}
1 & \text{if} \ h = i= k\\
N^{-1} & \text{if} \ h \neq i = k \ \text{or vice versa} \\
N^{-2} & \text{if} \ h \neq i \neq k.
\end{cases}
\end{equation}
What we get is the following statement.

\begin{thm} \label{thm:locex_bsiN}
Let $N \in \N$, $N\geq 2$ and assume \eqref{siiN}. There exists $T^* > 0$ such that if $T \leq T^*$ then there exists a unique smooth solution to system~\eqref{dir_peqc} such that, for any $i,h,k \in [[N]]$ and $m \in \N$,
\begin{equation*} 
\bigg\Vert \Big(\frac{\di}{\di t}\Big)^m c^i_{hk} \bigg\Vert_{\infty} \lesssim \begin{cases}
1 & \text{if} \ h = i= k\\
N^{-1} & \text{if} \ h \neq i = k \ \text{or vice versa} \\
N^{-2} & \text{if} \ h \neq i \neq k.
\end{cases}
\end{equation*}
where the implied constants depend only on $T^*$, $f$, $g$ and $m$.
\end{thm}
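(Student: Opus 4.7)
The plan is to deduce this essentially as an instance of \Cref{thm:locex_bsi}, taking $\beta = \beta^N$ (extended by zero, and evenly, outside $\{-(N-1),\dots,N-1\}$), while carefully tracking that every implied constant in the underlying fixed-point argument depends on $\beta^N$ only through its uniform c-self-control constant, which equals $4$ for all $N \geq 2$ by the computation just preceding the statement, and through the uniform bound $\norm{\beta^N}_{\ell^1(\Z)} = 3$.

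First I verify that hypothesis~\eqref{siiN} is precisely $|f^i_{hk}| \vee |g^i_{hk}| \lesssim \beta^N_{h-i}\beta^N_{k-i}$ with an $N$-independent constant, since the three cases in~\eqref{siiN} correspond exactly to the three possible values of $\beta^N_{h-i}\beta^N_{k-i}$, namely $1$, $(N-1)^{-1}$ and $(N-1)^{-2}$. This reduces matters to running the contraction-mapping argument of \Cref{prop:contr} on the closed ball
\[
\tilde{\call K}_N \defeq \Big\{ w = (w^i_{hk})_{i,h,k\in[[N]]} \in C^0([0,T])^{N^3} :\ \norm{w^i_{hk}}_\infty \leq 2 \tilde d^i_{hk} \ \forall\, i,h,k \in [[N]] \Big\},
\]
with $\tilde d^i_{hk} \defeq \beta^N_{h-i}\beta^N_{k-i} + |g^i_{hk}| \vee \sup_{[0,T]}|f^i_{hk}|$, equipped with the weighted sup-norm $\trinorm{w}_\infty \defeq \sup_{i,h,k}(\tilde d^i_{hk})^{-1}\norm{w^i_{hk}}_\infty$, and with $\tilde J_N$ the map obtained by integrating the right-hand side of~\eqref{dir_peqc} from $t$ to $T$.

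The core step is a convolution bound: for $c \in \tilde{\call K}_N$ and any $i, h, k \in [[N]]$, using that $\sum_{j \in [[N]]} \leq \sum_{j \in \Z}$ for nonnegative summands and then invoking $\beta^N \star \beta^N \leq 4\beta^N$,
\[
\bigg\Vert \sum_{j \in [[N]]} c^j_{hj}\, c^i_{jk} \bigg\Vert_\infty \lesssim \beta^N_{k-i} \sum_{j \in \Z} \beta^N_{h-j}\,\beta^N_{j-i} = \beta^N_{k-i}\,(\beta^N \star \beta^N)_{h-i} \lesssim \beta^N_{h-i}\beta^N_{k-i},
\]
with $N$-independent constants. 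Analogous bounds hold for the remaining quadratic terms in~\eqref{dir_peqc} (both the symmetric transport contribution $c^i_{hj}c^j_{jk}$ and the self-term $c^i_{hi}c^i_{ik}$). It follows that, for $T \leq T^*$ with $T^*$ depending on $\beta^N$ only through $C_{\mathrm{csc}} = 4$ and $\norm{\beta^N}_{\ell^1(\Z)} = 3$, and hence $N$-independent, $\tilde J_N$ maps $\tilde{\call K}_N$ into itself and is a contraction in $\trinorm{\cdot}_\infty$.

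The main subtlety is precisely guaranteeing that no implied constant silently depends on $N$; this rests entirely on the uniform-in-$N$ bounds for $C_{\mathrm{csc}}(\beta^N)$ and $\norm{\beta^N}_{\ell^1(\Z)}$ recalled above. Once the fixed point is obtained, its uniqueness within $\tilde{\call K}_N$ follows from contractivity, smoothness in $t$ by bootstrapping in the polynomial ODE~\eqref{dir_peqc}, and the higher time-derivative bounds $\norm{(\di/\di t)^m c^i_{hk}}_\infty \lesssim \beta^N_{h-i}\beta^N_{k-i}$ for $m \geq 1$ by induction on $m$, differentiating~\eqref{dir_peqc} and re-invoking the same convolution estimate at each stage, exactly as in the proofs of \Cref{thm:locex,thm:locex_bsi}.
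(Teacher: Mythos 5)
Your proof is correct and matches the paper's intended argument: the paper states \Cref{thm:locex_bsiN} without a separate proof, as an instance of the ``beyond shift-invariance'' machinery with $\beta = \beta^N$, and the only real content is precisely the point you flag and verify — that the implied constants (and hence $T^*$) depend on $\beta^N$ only through its c-self-control constant ($\leq 4$ uniformly in $N \geq 2$) and its $\ell^1$-norm ($= 3$), so they are $N$-independent. Your identification of \eqref{siiN} as $|f^i_{hk}|\vee|g^i_{hk}| \lesssim \beta^N_{h-i}\beta^N_{k-i}$ and the convolution estimate on the quadratic terms of \eqref{dir_peqc} are exactly what is needed.
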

\end{subsection}
Notice that this can be regarded as a result in a non-symmetric mean-field-like setting (see Remark \ref{mfl} for further comments on this terminology), as assumption~\eqref{siiN} is consistent with the fact that we expect the $j$-th derivative of a mean-field-like cost for the $i$-th player to scale by a factor of $N^{-1}$ whenever $j \neq i$.

\subsection{Long-time well-posedness for shift-invariant directed strongly gathering interactions}\label{s:ltwp}

It is clear that the previous construction follows the standard Cauchy-Lipschitz local (in time) existence argument, and the existence (and uniqueness) of a solution can be as usual continued up to a maximal time $T^*$, that is when the quantity $\max_{i,j} \beta_i^{-1}\beta_j^{-1} |c^0_{ij}|$ blows up. So far, we cannot exclude in general that such blow up time $T^*$ is finite.

Upcoming \Cref{def:sg} introduces an additional assumption on the running and terminal costs which will allow us to prove long-time well-posedness for the infinite-dimensional Nash system.
Given $m \leq n$, we will identify $M \in \Sym(m) \subset \Sym(n)$ by considering $\R^m \simeq \R^m \times \{0\} \subset \R^m \times \R^{n-m} = \R^n$ and then extending $M = 0$ on $\R^{n-m}$. Also, given $M \in \Sym(m)$ and $M' \in \Sym(n)$ we will say that $M=M'$ if $M'$ equals the above-mentioned extension of $M$ over $\R^n$.

\begin{defn} \label{def:sg}
Let $\scr M = (M^{(N)})_{N \in \N}$ be a sequence of matrices, with $M^{(N)} \in \Sym(N)$. We say that $\scr M$ is \emph{directed} if there exists $\ell \in \N$ such that eventually $M^{(N)} \equiv M^{(\ell)} \in \Sym(\ell)$. Given $\varrho > 1$, we say that $M \in \Sym(\ell)$ is $\varrho$-\emph{strongly gathering} if the polynomial
\[
\mu(z,w) = \sum_{h,k=0}^{\ell-1} M_{hk} z^h w^k, \quad z,w \in \C
\]
is such that $\mu(z,0) \notin (-\infty, 0)$ if $z \in \varrho \bar{\bb D}$.\footnote{We will use the notation $\bb D_r(z)$ for the open complex disc of radius $r$ about $z$, but we will omit the center when it is $0$ and the radius when it is $1$, so that, e.g.,~we have $\bb D_r = \bb D_r(0) = r \bb D$.} 
\end{defn}

\begin{rmk} The term ``directed'' is related to the fact that the $i$-th player's cost is affected by the states of the ``following'' players $j>i$ in the chain, and this fact might not be immediately clear from the previous definition, which just requires the matrices $M^{(N)}$ to be extensions of a fixed matrix, not depending on $N$. One may then have a look to the matrices of the form $M = w \otimes w$ in (the end of) \Cref{dir_ex1}. In particular,  $w = w^{(N)}$ in \eqref{wexample} gives rise to a directed family only when $m = 0$.

Moreover, in the situations described in \Cref{dir_ex1}, the associated sequence of matrices as the dimension $N$ diverges is not strongly gathering. Indeed, even though $\ell$ stays bounded, one has that $\mu$ is a polynomial with $\mu(1,0) = 0$. In fact, those situations can be seen as limit settings corresponding to taking $\varrho = 1$; we will comment on this case later on in \Cref{dir_sec:lc}. The validity of the strong gathering assumption as it is, can be achieved in different ways; two basic settings are given in the following examples.
\end{rmk}

\begin{ex}
If we want to stick to a matrix $f^0$ of the form~\eqref{dir_qtens}, in order to have $({f^0}^{(N)})_{N \in \N}$ (where $N$ is the dimension) directed with strongly gathering limit one can require that $\ell$ is independent of $N$ large and 
\[
\sum_{j=1}^{\ell-1} w_j = 1-\epsilon, \quad \epsilon > 0,
\]
so that $\phi(z,0) \geq \epsilon$ if $|z| \leq 1$. Put it differently, it suffices to consider
\[
w = (\nu, -w_1, \dots, -w_{\ell-1}), \quad \text{with} \ w_j \geq 0 \ \forall\, j, \ \sum_{j=1}^{\ell-1} w_j = 1, \ \text{and} \ \nu > 1;
\]
this means that we are considering an underlying graph where each node is directly connected with itself as well, and such link has a negative weight. As in this model a positive weight is associated to the tendency, in order to reduce their cost, of each player to get closer to their neighbors, a negative connection with themself is to be interpreted as a drift towards self-annihilation. More prosaically, this means that the state of each player will also tend to the common position $0 \in \R^d$.

Regarding this example, it is also worth pointing out that the common attractive position $0 \in \R^d$ cannot be any other arbitrary point, in the sense that the structure of problem is not invariant under translation of the coordinates. This is due to the fact that we are considering a graph whose nodes have outdegree different from $1$.
\end{ex}

\begin{ex}
Another setting with directedness and strong gathering is that of
\[
q^0 = \nu E^0 + w \otimes w,
\]
where $w$ can be given by~\eqref{dir_wgen} with $\ell$ independent of $N$, $\nu > 0$ and $E^0x = x^0$ for all $x = (x^0, \dots, x^{N-1}) \in (\R^d)^N$. Here
\[
\pair{F^iX_t}{X_t} = \nu \lvert X^i_t \rvert^2 + \bigg\lvert X^i_t - \sum_{j:\ (i,j) \in G_\ell} w_j X^j_t \bigg\rvert^2,
\]
and with a translation of the coordinates we can lead to this situation also costs like
\[
J^i(\alpha) = \frac12 \, \bb E \int_0^T \!\bigg( \lvert\alpha^i\rvert^2 + \nu \lvert X^i_t - y\rvert^2 + \bigg\lvert X^i_t - \sum_{j:\ (i,j) \in G_\ell} w_j X^j_t \bigg\rvert^2 \,\bigg),
\]
for any given $y \in \R^d$. This example, more genuinely than the previous one, shows that the strong gathering assumption entails that we are giving some sort of preference about where the players should aggregate. This will strengthen the attractive structure yielded by a graph satisfying only assumption \eqref{dir_a2} (\Cref{dir_ex1}), thus providing more stability to our game.
\end{ex}

From this section on in this part, the following assumptions will be in force, declined in suitable manners which will be specified.
\begin{namedthm}[\hypertarget{AS}{Assumptions ($\boldsymbol\star$)}]
The matrices $f^i$ and $g^i$ are shift-invariant and $f^0 = f^0(N)$ and $g^0 = g^0(N)$ are directed with limits $f,g \in \Sym(\ell)$ for some $\ell \in \N$. The matrix $f$ is $\varrho$-strongly gathering for some $\varrho > 0$ and $g$ is \emph{compatible on $I$} with $f$ for some interval $I \subset \R$, in the following sense: given
\[
\varphi (z,w) \defeq \sum_{h,k=0}^{\ell-1} f_{hk} z^h w^k, \qquad \Psi(z,w) \defeq \sum_{h,k=0}^{\ell-1} g_{hk} z^h w^k
\]
and setting
\[
\xi \defeq \sqrt{\phi(\cdot,0)},\footnotemark \qquad \psi \defeq \Psi(\cdot,0),
\]
\footnotetext{The symbol $\sqrt{\cdot}$ denotes the principal branch of the square root function.}
we have
\begin{equation} \label{hpcomp}
\inf_{t \in I} |\psi \tanh(t\xi) + \xi| > 0 \quad \text{on} \ \varrho \bb D.
\end{equation}
\end{namedthm}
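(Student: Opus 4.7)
The final labeled block \textbf{Assumptions ($\boldsymbol\star$)} is not a theorem, lemma, proposition, or claim: it is a declaration of standing hypotheses that will be in force throughout \Cref{s:ltwp} and consumed by the subsequent long-time well-posedness result (\Cref{dir_wp}), the long-time limit (\Cref{dir_thm:conv}), and the $\epsilon$-Nash construction of \Cref{s:almostnash}. Shift-invariance of $(f^{i},g^{i})$, directedness of $(f^{0}(N),g^{0}(N))_{N}$, $\varrho$-strong gathering of the limit matrix $f$, and compatibility \eqref{hpcomp} of $g$ with $f$ on $I$ are all stipulated rather than deduced from earlier material. Consequently there is no conclusion to prove here, and a ``proof plan'' is strictly speaking not called for.

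If the adjacent question of \emph{verifying} the compatibility condition \eqref{hpcomp} for concrete pairs $(f,g)$ were being asked, I would proceed as follows. First I would write $\varphi(\cdot,0)$ and $\Psi(\cdot,0)$ explicitly from the entries of $f$ and $g$; by \Cref{def:sg}, strong gathering precisely says $\varphi(z,0)\notin(-\infty,0)$ for $z\in\varrho\bar{\bb D}$, so the principal square root $\xi=\sqrt{\varphi(\cdot,0)}$ is holomorphic on a neighbourhood of $\varrho\bar{\bb D}$ with $\Re\xi(z)>0$ and $\xi(z)\neq 0$ there. Next I would study the function $(t,z)\mapsto \psi(z)\tanh(t\xi(z))+\xi(z)$ on $I\times\varrho\bar{\bb D}$: by continuity and compactness of $\varrho\bar{\bb D}$, the strict positivity of the infimum in \eqref{hpcomp} reduces to pointwise non-vanishing uniformly in $t\in I$. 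The genuinely delicate point would be controlling the range of $\tanh(t\xi(z))$ as $t$ varies over $I$ and $z$ varies over $\varrho\bar{\bb D}$; for $I$ bounded this is immediate, while for unbounded $I=[0,+\infty)$ one must use $\tanh(t\xi)\to\pm 1$ together with a sign condition on $\psi\pm\xi$.

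Two immediate sanity checks confirm the assumptions are non-vacuous. If $g=0$, then $\psi\equiv 0$ and \eqref{hpcomp} reduces to $|\xi|>0$ on $\varrho\bar{\bb D}$, which is automatic from strong gathering. If $g$ enjoys a gathering-type property ensuring $\Re\psi\geq 0$ on $\varrho\bar{\bb D}$, then for real $t\geq 0$ one has $\Re(\psi\tanh(t\xi)+\xi)>0$ because $\Re\xi>0$ and $\Re(\psi\tanh(t\xi))\geq 0$, so $I=[0,+\infty)$ is admissible. The substantive work — long-time existence, convergence as $T\to\infty$, and the $\epsilon$-Nash property — will be carried out in the theorems that follow and will use these assumptions as hypotheses rather than produce them as conclusions.
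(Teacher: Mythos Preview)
Your identification is correct: \textbf{Assumptions ($\boldsymbol\star$)} is a block of standing hypotheses, not a claim, and the paper offers no proof for it either---it is immediately followed only by a Remark giving sufficient conditions under which \eqref{hpcomp} holds.

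One small correction to your supplementary commentary. Your first sanity check ($g=0$) matches the paper's Remark exactly. Your second, however, is flawed: for $z\in\varrho\bar{\bb D}$ the value $\xi(z)$ is in general complex, so $\tanh(t\xi(z))$ is complex even for real $t\geq 0$, and hence $\Re\psi\geq 0$ does \emph{not} by itself give $\Re(\psi\tanh(t\xi))\geq 0$. The paper's Remark instead records the alternative sufficient condition $|\psi|\geq|\xi|$ on $\varrho\bb D$, under which $|\psi\tanh(t\xi)+\xi|\geq |\psi||\tanh(t\xi)|-|\xi|+ \text{(lower-order)}$ can be controlled; you may wish to replace your second check with that one.
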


\begin{rmk}
Note that for any $t \in \R$, one has that $\psi \tanh(t\xi) + \xi$ is bounded on $D$, as $\xi(z)t$ can be a singular point only if $\xi(z)^2 = \phi(z,0) < 0$, which contradicts that $z \in D$. Also note that condition~\eqref{hpcomp} holds with $I = \R$ if, e.g., $g = 0$ or $|\psi| \geq |\xi|$.
\end{rmk}

Considering system \eqref{redLQeq0}, we see that for any $N$ sufficiently large with respect to $\ell$ we have $f_{hk} = 0 = g_{hk}$ if either $h$ or $k$ is negative; hence the limit system as $N \to \infty$ will be given by
\begin{equation} \label{dir_eqcZ}
-\dot c_{hk} - c_{0h} c_{0k} + \sum_{j \in \Z} \big( c_{0,h-j} c_{jk} + c_{0,k-j} c_{jh} \big) = f_{hk}, \quad c_{hk}(T) = g_{hk},
\end{equation}
where $f_{hk}$ and $g_{hk}$ extend to $h,k \in \Z^2$ by letting $f_{hk} = 0 = g_{hk}$ if $(h,k) \notin [[\ell]]^2$.\footnote{Note that the limit operator $q$ is exactly the matrix $f \in \Sym(\ell)$ seen as embedded into the space of symmetric linear operators on $\R^{\Z}$.} Given this system, another reduction is possible. Since $f_{hk} = 0 = g_{hk}$ whenever $h \wedge k < 0$, the assumption that $c_{hk} = 0$ if $h \wedge k < 0$ is not a priori incompatible with the structure of system~\eqref{dir_eqcz}. Therefore we will look for a solution with this additional property; in this way, the coefficients which are not a priori null, $(c_{hk})_{h,k \in \N}$, will define in a natural way a symmetric operator $c(t) \otimes I_d$ on $\call X$ which vanishes on $\ell^\infty(\Z_{<0};\R^d)$, so that it can also be seen as a trivial extension to $\call X$ of a symmetric operator on $\ell^\infty(\N;\R^d)$. This reduces the system of ODEs in~\eqref{dir_eqcZ} to
\begin{equation} \label{dir_eqcz}
-\dot c_{hk} - c_{0h} c_{0k} + \sum_{j = 0}^h c_{0,h-j} c_{jk} + \sum_{j = 0}^k c_{0,k-j} c_{jh} = f_{hk}, \quad c_{hk}(T) = g_{hk},
\end{equation}
which will be the object of our following study, and will eventually provide a solution with the particular form presented in the upcoming definition.

\begin{defn}
A \emph{quadratic shift-invariant directed (QSD) solution} to \eqref{dir_NS} is a classical solution of the form
\begin{equation} \label{dir_uLQl}
u^i(t,x) = \frac12 \sum_{h,k \in \N} c_{hk}(t) \pair{x^{h+i}}{x^{k+i}}_{\R^d} + \int_t^T \tr c(s)\,\di s,
\end{equation}
for some $c \colon [0,T] \to \ell^1(\N^2) \subset \ell^1(\Z^2)$.\footnote{A natural immersion $\imath \colon \ell^1(\N^2) \hookrightarrow \ell^1(\Z^2)$ is given by $\imath(x)^i = x^i$ if $i \in \N$ and $\imath(x)^i = 0$ otherwise.}
\end{defn}

\begin{thm} \label{dir_wp}
Under Assumptions \textnormal{(\hyperlink{AS}{$\star$})} with $[0,T] \subseteq I$, there exists a unique QSD solution to~\eqref{dir_NS} on $[0,T] \times \call X$.
\end{thm}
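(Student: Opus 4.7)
The plan is to reduce the infinite coupled ODE system \eqref{dir_eqcz} to a scalar equation for the generating function
\[
\gamma(t,z,w) \defeq \sum_{h,k \geq 0} c_{hk}(t)\, z^h w^k,
\]
viewed analytically on a bi-disc $\varrho\bar{\bb D}\times\varrho\bar{\bb D}$, and then to recover $c$ as its Taylor coefficients. The discrete convolutions in \eqref{dir_eqcz} become ordinary products on the generating-function side, so (using $c_{hk}=c_{kh}$) the system is formally equivalent to
\begin{equation*}
-\de_t\gamma + \bigl(\gamma(t,z,0)+\gamma(t,0,w)\bigr)\gamma(t,z,w) - \gamma(t,z,0)\gamma(t,0,w) = \phi(z,w),\qquad \gamma(T,\cdot,\cdot)=\Psi.
\end{equation*}
Setting $w=0$ decouples a scalar Riccati equation $-\de_t\gamma(t,z,0)+\gamma(t,z,0)^2=\xi(z)^2$ with terminal value $\psi(z)$, parametrised by $z$.

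I would next integrate this Riccati equation explicitly. Writing $s=T-t$ and introducing $\tau_s(z)\defeq \tanh(\xi(z) s)/\xi(z)$, the standard substitution $\gamma=\xi\tanh(\theta)$ yields
\[
\gamma(t,z,0)= \frac{\psi(z)+\phi(z,0)\,\tau_s(z)}{1+\psi(z)\,\tau_s(z)}.
\]
Since $\zeta\mapsto\tanh\zeta/\zeta$ is an even entire function, $\tau_s$ is a convergent power series in $\xi^2=\phi(\cdot,0)$, hence holomorphic in $z\in\varrho\bar{\bb D}$; the strong gathering assumption keeps $\phi(\cdot,0)$ off the branch cut of $\sqrt{\cdot}$, so $\xi$ itself is holomorphic there. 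The compatibility condition \eqref{hpcomp}, applied for $s\in T-I\supseteq[0,T]$, is precisely the non-vanishing of $\xi+\psi\tanh(\xi s)$, equivalently of the denominator $1+\psi\tau_s$, on $\varrho\bar{\bb D}$; hence $\gamma(\cdot,\cdot,0)$ is holomorphic on $[0,T]\times\varrho\bar{\bb D}$. Once the diagonal is known, symmetry supplies $\gamma(t,0,w)$, and the original PDE becomes a linear first-order ODE in $t$ for $\gamma(t,z,w)$ with holomorphic coefficients, which by an integrating factor admits a unique solution holomorphic in $(z,w)\in\varrho\bar{\bb D}^2$ and smooth in $t\in[0,T]$.

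Holomorphy on a bi-disc strictly larger than the closed unit one produces, via Cauchy's estimates, the geometric bound $\norm{c_{hk}}_\infty\lesssim\varrho^{-(h+k)}$ (the same bound propagating to all $t$-derivatives by differentiating the generating PDE), so $c(t)\in\ell^1(\N^2)$ uniformly in $t\in[0,T]$. Matching coefficients in the PDE recovers \eqref{dir_eqcz}, and plugging $c$ into the ansatz \eqref{dir_uLQl} delivers a classical solution of \eqref{dir_NS} via the computation of Section~2.2, all relevant series normally convergent on bounded subsets of $[0,T]\times\call X$. Uniqueness is obtained by reversing the argument: any QSD solution produces a holomorphic $\gamma$ on $\bar{\bb D}^2$ satisfying the generating PDE with the same terminal data, whose diagonal is forced to be the explicit Riccati flow and whose off-diagonal part is then pinned down by the subsequent linear equation. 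The crux of the proof is the global-in-time holomorphic solvability of the parametric Riccati equation on a disc strictly larger than $\bar{\bb D}$: strong gathering gives a single-valued analytic $\xi$, while \eqref{hpcomp} rules out blow-up of the Riccati flow in the parameter $z$ throughout $[0,T]$. Dropping either assumption allows $\gamma$ to develop singularities inside $\bar{\bb D}^2$, the Taylor coefficients lose $\ell^1$ summability, and the reconstruction of a classical Nash solution fails.
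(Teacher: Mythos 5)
Your proposal follows essentially the same route as the paper: reduce the hierarchy \eqref{dir_eqcz} to a Riccati-type PDE for the generating function, decouple a scalar parametric Riccati equation on the diagonal $w=0$, use strong gathering and compatibility to get holomorphy on a bi-disc of radius $\varrho>1$, and extract geometric decay of $c_{hk}$ via Cauchy's estimates (this is the content of \Cref{dir_lem:uniquesolc}, \Cref{dir_lem:xi}, \Cref{dir_lem:cl1} and the two lemmata that follow). Your reverse-engineering of uniqueness from the generating function is a valid alternative to the direct induction of \Cref{dir_lem:uniquesolc}, with the small caveat that $\ell^1$ summability of $c$ only yields $\gamma(t,\cdot,\cdot)$ holomorphic on the open bi-disc $\bb D^2$ (continuous on the closed one), which is nonetheless enough since the Riccati ODE determines its solution pointwise in $z$.

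There is, however, one genuine gap in the reasoning. You assert that ``$\zeta\mapsto\tanh\zeta/\zeta$ is an even entire function'' and deduce that $\tau_s=\tanh(\xi s)/\xi$ is automatically holomorphic on $\varrho\bar{\bb D}$ as a power series in $\xi^2=\phi(\cdot,0)$. This is false: $\cosh$ vanishes at $\zeta=i\pi(k+\tfrac12)$, so $\tanh\zeta/\zeta$ has nonremovable simple poles there, and its Maclaurin series converges only for $|\zeta|<\pi/2$, a radius that $|\xi(z)s|$ certainly exceeds in general. Consequently the argument as written would make the strong gathering hypothesis essentially superfluous for the Riccati flow, which cannot be right. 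The correct reason $\tau_s$ (and hence $\gamma(t,\cdot,0)$) extends holomorphically across $\varrho\bar{\bb D}$ is that the poles of $\cosh(\xi(z)s)$ occur precisely at $\phi(z,0)s^2=-(k+\tfrac12)^2\pi^2\in(-\infty,0)$, and these values are excluded by the strong gathering hypothesis; alternatively, as in the paper's proof of \Cref{dir_lem:xi}, one avoids dividing by $\xi$ altogether and checks that the numerator $\xi\sinh(\xi t)+\psi\cosh(\xi t)$ and the factored denominator $\psi\,\mathrm{sinhc}(\xi t)t+\cosh(\xi t)$ are genuine power series in $\phi(z,0)$, with non-vanishing of the latter guaranteed by \eqref{hpcomp}. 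Once this step is repaired, the rest of your argument goes through and matches the paper's.
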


The following lemmata, in whose statement the hypotheses of \Cref{dir_wp} will be implied, provide the steps of our proof of \Cref{dir_wp}.

\begin{lem} \label{dir_lem:uniquesolc}
There exists a unique sequence $(c_{hk})_{h,k \in \N} \subset C([0,T]) \cap C^\infty((0,T))$, with $c_{hk} = c_{kh}$, which solves the infinite dimensional system~\eqref{dir_eqcz} on $[0,T]$.
\end{lem}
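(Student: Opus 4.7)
The plan is to exploit the triangular (cascade) nature of system \eqref{dir_eqcz}. After isolating in the two convolution sums the contributions at $j=0$ (which generate a $2c_{0h}c_{0k}$ term cancelling the $-c_{0h}c_{0k}$ on the left) and at $j=h$, $j=k$ (which generate $2c_{00}c_{hk}$), the equation for $c_{hk}$ can be rewritten as
\[
-\dot c_{hk} + 2\,c_{00}\,c_{hk} \;=\; f_{hk} - c_{0h}\,c_{0k} - \sum_{j=1}^{h-1} c_{0,h-j}\,c_{jk} - \sum_{j=1}^{k-1} c_{0,k-j}\,c_{hj}, \qquad c_{hk}(T)=g_{hk}.
\]
For $(h,k)\neq(0,0)$ this is a linear inhomogeneous scalar ODE for $c_{hk}$ whose coefficient and forcing only involve $c_{00}$ and entries $c_{jl}$ with strictly smaller indices, in the sense that $j<h$, or else $j\leq h$ and $l<k$. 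Hence the whole sequence can be constructed iteratively.

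First, one solves the scalar Riccati equation $-\dot c_{00} + c_{00}^{2} = f_{00}$, $c_{00}(T)=g_{00}$; its closed-form $\tanh$-expression is smooth and bounded on $[0,T]$ precisely because the compatibility condition \eqref{hpcomp} at $z=0$ prevents the relevant denominator from vanishing for $t\in I\supseteq[0,T]$. Next, one constructs the entries $c_{0k}$ for $k\geq 1$ by induction on $k$: each satisfies a linear ODE $-\dot c_{0k} + 2c_{00}c_{0k} = R_{k}$ with $R_k$ already known from the induction hypothesis, and existence, uniqueness, and $C^{\infty}$-regularity on $[0,T]$ follow from the integrating factor $\exp\bigl(-2\int_{t}^{T}c_{00}\bigr)$. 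Finally, one obtains the general $c_{hk}$ by a double induction with outer index $h\geq 1$ and inner index $k\geq h$, extending by symmetry $c_{hk}\defeq c_{kh}$ for $k<h$; a direct substitution shows that the equation at $(k,h)$ coincides with the one at $(h,k)$ after using $c_{jl}=c_{lj}$, so the cascade produces a symmetric sequence which solves the full system.

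Uniqueness follows by the same cascade: any solution $(\tilde c_{hk})$ must have its $\tilde c_{00}$-entry satisfy the Riccati equation, which has a unique solution on its maximal interval of existence by the local Lipschitz continuity of its right-hand side; and then, inductively, each $\tilde c_{hk}$ is pinned down by the unique solution of its own linear ODE. The only genuinely nontrivial point is the global-in-time solvability of the Riccati equation for $c_{00}$, and this is exactly what condition \eqref{hpcomp} at $z=0$ delivers. The stronger version of \eqref{hpcomp} over $\varrho\bar{\bb D}$ is not needed at this stage, but it will enter in the subsequent lemmas to package the $c_{hk}$'s into the generating functions $\gamma(z,t)=\sum_{k}c_{0k}(t)z^{k}$ and $\Gamma(z,w,t)=\sum_{h,k}c_{hk}(t)z^{h}w^{k}$, which respectively satisfy the scalar Riccati PDE $-\partial_{t}\gamma+\gamma^{2}=\xi^{2}$ and a linear transport equation, and from which the decay of $c_{hk}$ needed to upgrade the ODE solution to the QSD solution of \Cref{dir_wp} will be extracted via Cauchy's formula.
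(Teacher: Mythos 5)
Your proof is essentially the paper's: solve the scalar Riccati equation for $c_{00}$ (whose global solvability on $[0,T]$ comes from the compatibility condition~\eqref{hpcomp} at $z=0$, together with $f_{00}=\varphi(0,0)>0$ from strong gathering), then obtain all remaining $c_{hk}$ by a cascade of linear ODEs ordered by the strict Pareto preference on index pairs, and conclude symmetry from the invariance of the system under the swap $(h,k)\mapsto(k,h)$. One small caveat: the displayed rewriting is only valid for $h,k\geq 1$, since when $\min(h,k)=0$ (and $(h,k)\neq(0,0)$) the $j=0$ and $j=h$ (resp.\ $j=k$) contributions coincide and the equation is simply $-\dot c_{0k}+2c_{00}c_{0k}=f_{0k}-\sum_{j=1}^{k-1}c_{0,k-j}c_{0j}$, without the extra $-c_{0h}c_{0k}$ on the right; since you treat the row $c_{0k}$ separately by its own induction, the cascade argument is unaffected, but the blanket claim that the displayed formula covers all $(h,k)\neq(0,0)$ should be dropped.
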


\begin{proof}
We perform the change of variables $t \mapsto T-t$ and prove that there exists a unique solution on $[0,T]$ with initial condition $c(0)=g$ to the forward system
\begin{equation} \label{dir_eqczf}
\dot c_{hk} - c_{0h} c_{0k} + \sum_{j = 0}^h c_{0,h-j} c_{jk} + \sum_{j = 0}^k c_{0,k-j} c_{jh} = f_{hk}
\end{equation}
which smoothly extends to $\R$. Then, the solution to \eqref{dir_eqcz} on $[0,T]$ with $c(T)=0$ will be given by the restriction to $[0,T]$ of $\hat c(T-{\cdot})$, where $\hat c$ is the unique solution to \eqref{dir_eqczf} on $\R$ with $\hat c(0)=g$.
Note that $\hat c_{00}$ is the solution to the Riccati equation $\dot{\hat c}_{00} + \hat c_{00}^2 = f_{00}$, where $f_{00} = \varphi(0,0) > 0$ by strong gathering, hence
\[
\hat c_{00}(t) = \nu \, \frac{\nu \sinh(\nu t) + \frk g \cosh(\nu t)}{\frk g \sinh(\nu t) + \nu \cosh(\nu t)}, \quad \frk g \defeq g_{00}, \ \nu \defeq \sqrt{f_{00}}\,.
\]
All the other $\hat c_{hk}$'s satisfy first-order ODEs with coefficient which are second-order polynomials depending only on $f_{hk}$ and $\hat c_{h'k'}$ with $(h',k') \prec (h,k)$, where $\prec$ denotes the strict Pareto preference.\footnote{That is, $(h',k') \prec (h,k)$ if and only if $h'\leq h$ and $k'\leq k$ with at least one strict inequality.} Therefore, existence and uniqueness of the solution to the infinite system may be proved by induction. Indeed, suppose $(\hat c_{0k'})_{0\leq k'<k} \subset C^\infty(\R)$ are given, and note that
\[
\dot{\hat c}_{0k} + 2\hat c_{00}\hat c_{0k} = f_{0k} - \sum_{j=1}^{k-1} \hat c_{0,k-j}\hat c_{0j};
\]
then $\hat c_{0k}$ is unique and smooth as well. This proves the existence and uniqueness of $\hat c_{0k}$ for all $k \in \N$; then, looking at this argument as the base step for a new induction over $h$, one proves analogously the existence and uniqueness of $\hat c_{hk}$ for all $h \in \N$ and any $k \in \N$.
Finally, $c_{hk} = c_{kh}$ since equations~\eqref{dir_eqc} are invariant with respect to the swap $(h,k) \mapsto (k,h)$.
\end{proof}

The arguments below show that the coefficients $c_{hk}$ can be thought as derivatives of a generating function $\hat\Xi$, which will play a fundamental role in the long-time analysis.

\begin{lem} \label{dir_lem:xi}
The solution to \eqref{dir_eqcz} on $[0,T]$ is given by
\begin{equation} \label{dir_cderxi}
c_{hk}(t) = \frac1{h!\,k!} \, \frac{\de^{h+k}}{\de z^h \de w^k}\bigg|_{(0,0)} \hat\Xi(T-t), \quad \forall\, t \in [0,T]
\end{equation}
for some function $\hat\Xi \colon I \times \varrho \bb D^2 \to \C$, of class $C^\infty$ with respect to $t \in I$ and analytic in $\varrho \bb D^2$, such that $\hat\Xi(\cdot,z,w) = \hat\Xi(\cdot,w,z)$ and $\hat\Xi(0,\cdot,\cdot) = \Psi$.
\end{lem}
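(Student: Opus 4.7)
The plan is to construct $\hat\Xi$ by solving a functional ODE that packages the infinite system \eqref{dir_eqcz} into a single equation on a generating series. I work with the forward system \eqref{dir_eqczf} introduced in the proof of \Cref{dir_lem:uniquesolc}, where $\hat c(\tau) = c(T-\tau)$ satisfies \eqref{dir_eqczf} with $\hat c(0) = g$. Formally multiplying \eqref{dir_eqczf} by $z^h w^k$, summing over $h,k \geq 0$, and recognizing each sum $\sum_{j=0}^h \hat c_{0,h-j}\hat c_{jk}$ as a Cauchy product in the index $h$, the candidate $\tildee{\Xi}(\tau,z,w) = \sum_{h,k\geq 0} \hat c_{hk}(\tau)\, z^h w^k$ must satisfy
\[
\de_\tau \tildee{\Xi} + [\zeta(\tau,z) + \zeta(\tau,w)]\,\tildee{\Xi} - \zeta(\tau,z)\zeta(\tau,w) = \varphi(z,w), \qquad \tildee{\Xi}(0,\cdot,\cdot) = \Psi,
\]
where $\zeta(\tau,z) \defeq \tildee{\Xi}(\tau,z,0)$. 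Specializing to $w=0$ closes this into a scalar Riccati equation for $\zeta$ (the cross terms involving $\zeta(\tau,0)$ cancel),
\[
\de_\tau \zeta + \zeta^2 = \xi^2, \qquad \zeta(0,\cdot) = \psi,
\]
whose solution, obtained via the substitution $\zeta = \de_\tau \log u$ (so that $u$ solves the linear ODE $\de_\tau^2 u = \xi^2 u$), is explicitly
\[
\zeta(\tau,z) = \xi(z)\,\frac{\xi(z)\sinh(\tau\xi(z)) + \psi(z)\cosh(\tau\xi(z))}{\xi(z)\cosh(\tau\xi(z)) + \psi(z)\sinh(\tau\xi(z))}.
\]

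The main technical point is the analyticity of $\zeta$ on all of $\varrho\bb D$ for every $\tau \in I$. The $\varrho$-strong gathering of $f$ yields $\varphi(z,0) \notin (-\infty,0)$ on $\varrho\overline{\bb D}$, so $\xi = \sqrt{\varphi(\cdot,0)}$ is well-defined and analytic on $\varrho\bb D$. The compatibility condition \eqref{hpcomp} is precisely the non-vanishing of $\cosh(\tau\xi)\,[\xi + \psi\tanh(\tau\xi)]$ uniformly for $\tau \in I$ and $z \in \varrho\bb D$, hence $\zeta(\tau,\cdot)$ is analytic on $\varrho\bb D$ and $C^\infty$ in $\tau \in I$. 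With $\zeta$ in hand, the equation for $\tildee{\Xi}$ is a linear inhomogeneous ODE in $\tau$ parametrized by $(z,w)$. Letting $\Lambda(\tau,z,w) \defeq \int_0^\tau [\zeta(s,z) + \zeta(s,w)]\,ds$, the integrating-factor method produces the closed-form expression
\[
\tildee{\Xi}(\tau,z,w) = e^{-\Lambda(\tau,z,w)}\bigg[ \Psi(z,w) + \int_0^\tau e^{\Lambda(s,z,w)}\bigl[\varphi(z,w) + \zeta(s,z)\zeta(s,w)\bigr]\,ds \bigg],
\]
which is manifestly $C^\infty$ in $\tau \in I$ and jointly analytic in $(z,w) \in \varrho\bb D^2$. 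The symmetry $\tildee{\Xi}(\tau,z,w) = \tildee{\Xi}(\tau,w,z)$ is inherited from the symmetry of $\varphi$ and $\Psi$ together with the symmetric role of $z,w$ in the formula, and $\tildee{\Xi}(0,\cdot,\cdot) = \Psi$ by construction.

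To conclude, I would define $\tilde c_{hk}(\tau) \defeq \frac{1}{h!\,k!}\,\de^{h+k}\tildee{\Xi}/\de z^h\,\de w^k|_{(0,0)}(\tau)$ and differentiate the PDE for $\tildee{\Xi}$ at $(z,w) = (0,0)$: the joint analyticity allows termwise differentiation of the series, and the Cauchy product structure of the nonlinearity reproduces exactly the convolutive sums of \eqref{dir_eqczf}, so $(\tilde c_{hk})$ solves the infinite system \eqref{dir_eqczf} with $\tilde c(0) = g$. Uniqueness from \Cref{dir_lem:uniquesolc} then yields $\tilde c_{hk} = \hat c_{hk}$, so that $c_{hk}(t) = \hat c_{hk}(T-t)$ coincides with the Taylor coefficients at $(0,0)$ of $\hat\Xi(t,\cdot,\cdot) \defeq \tildee{\Xi}(T-t,\cdot,\cdot)$, which is precisely formula \eqref{dir_cderxi}. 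The main obstacle throughout is controlling the Riccati denominator uniformly as $(\tau,z)$ ranges over $I \times \varrho\bb D$—this is exactly what assumption \eqref{hpcomp} is engineered to deliver, and it is what pins $I$ as the natural interval of validity for the smooth and analytic extension of $\hat\Xi$.
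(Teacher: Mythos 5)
Your proof is correct and takes essentially the same route as the paper: derive the functional ODE for the generating function, close it at $w=0$ to a scalar Riccati equation, solve that via the logarithmic-derivative substitution, recover the full $\hat\Xi$ by the integrating-factor method, control analyticity on $\varrho\bb D^2$ via strong gathering and \eqref{hpcomp}, and then match Taylor coefficients using the uniqueness from \Cref{dir_lem:uniquesolc}. One small slip in the last line: the correct identification is $\hat\Xi \defeq \tildee{\Xi}$ (the forward-time generating function), not $\hat\Xi(t,\cdot,\cdot) \defeq \tildee{\Xi}(T-t,\cdot,\cdot)$; the time reversal is already built into \eqref{dir_cderxi} through the argument $T-t$, and with your renaming $\hat\Xi(0,\cdot,\cdot)$ would be $\tildee{\Xi}(T,\cdot,\cdot)$ rather than $\Psi$, and \eqref{dir_cderxi} would produce $c_{hk}(T-t)$ in place of $c_{hk}(t)$.
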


\begin{proof}
Suppose that, for all $z,w \in D$ fixed, there exists a solution $\hat \Xi$ on $I$ to
\begin{equation} \label{dir_eqxi}
\de_t \hat \Xi(t,z,w) - \hat \Xi(t,z,0) \hat\Xi(t,0,w) + \big( \hat \Xi(t,z,0) + \hat \Xi(t,0,w) \big) \hat \Xi(t,z,w) = \varphi(z,w),
\end{equation}
with the desired properties of smoothness, invariant with respect the swap $(z,w) \mapsto (w,z)$ and such that $\hat \Xi(0,z,w) = \Psi(z,w)$.
Then by taking the derivatives $\de^h_z\de^k_w|_{(0,0)}$ one recovers equation~\eqref{dir_eqczf}, and thus the coefficients given by \eqref{dir_cderxi} satisfy \eqref{dir_eqcz} on $[0,T]$.\footnote{In other words, we are saying that $\de^h_z \de^k_w \hat \Xi(t,0,0) = h!\, k!\, \hat c_{hk}(t)$, where the coefficients $\hat c_{hk}$ are those in the proof of \Cref{dir_lem:uniquesolc}.}
To see that \eqref{dir_eqxi} admits such a solution, note that $\hat \Xi(t,z,0)$ solves the Riccati equation $\de_t \hat \Xi(t,z,0) + \hat \Xi(t,z,0)^2 = \varphi(z,0)$; hence,
\[
\hat \Xi(t,z,0) = \xi(z) \, \frac{\xi(z) \sinh(\xi(z) t) + \psi(z) \cosh(\xi(z) t)}{\psi(z) \sinh(\xi(z) t) + \xi(z) \cosh(\xi(z) t)}, \quad \psi \defeq \Psi(\cdot,0).
\]
Note that letting
\[
\call E(t,z;\zeta_1,\zeta_2) \defeq \zeta_1(z) \sinh(\xi(z) t) + \zeta_2(z) \cosh(\xi(z) t)
\]
one can write
\[
\hat \Xi(t,z,0) = \xi(z) \, \frac{\call E(t,z;\xi,\psi)}{\call E(t,z;\psi,\xi)} = \frac{\de}{\de t} \log \call E(t,z;\psi,\xi).
\]
For any $t \in I$ fixed, this function is well-defined for $z \in \varrho \bb D$ and analytic therein by the compatibility assumption~\eqref{hpcomp}. At this point, \eqref{dir_eqxi} becomes a first-order ODE in $t \in I$, for all $z,w \in \varrho \bb D$ fixed, whose solution is given by
\begin{equation} \label{dir_xitint}  \begin{split}
\hat \Xi(t,z,w) = \frac{\xi(z) \xi(w)}{\call E(t,z;\psi,\xi) \call E(t,w;\psi,\xi)} \bigg( &\Psi(z,w) + \int_0^t \Big( \call E(t,z;\xi,\psi) \call E(t,w;\xi,\psi) \\
&\ + \frac{\varphi(z,w)}{\xi(z) \xi(w)} \call E(t,z;\psi,\xi) \call E(t,w;\psi,\xi) \Big) \bigg).
\end{split} \end{equation}
Note that $\hat \Xi(t,\cdot,\cdot)$ is well-defined for $z,w \in \varrho \bb D$ by the same argument we applied to $\hat \Xi(t,\cdot,0)$. Also, it is trivial that $\hat \Xi(\cdot,z,w) \in C^\infty(I)$, and by differentiating under the integral sign one proves the analyticity of $\hat \Xi(t,\cdot,\cdot)$.
\end{proof}

\begin{rmk} \label{LQ_computeintegral}
Let $\tilde{\call E}(t,z;\zeta_1,\zeta_2) \defeq {\call E}(t,z;\zeta_2,\zeta_1)$. As, omitting the dependence on $\zeta_1$ and $\zeta_2$ in $\call E$, we have $\frac{\de}{\de t} \call E(t,z) = \xi(z) \tilde{\call E}(t,z)$, it is easy to see that
\[
\frac\de{\de t} \big( \tilde{\call E}(t,z) \call E(t,w) \pm \call E(t,z) \tilde{\call E}(t,w) \big) = (\xi(z)\pm\xi(w)) \big( \call E(t,z) \call E(t,w) \pm \tilde{\call E}(t,z) \tilde{\call E}(t,w) \big)
\]
and thus
\[
\int_0^t \call E(s,z) \call E(s,w)\,\di s = \frac12 \Big( \frac{\tilde{\call E}(\cdot,z) \call E(\cdot,w) + \call E(\cdot,z) \tilde{\call E}(\cdot,w)}{\xi(z)+\xi(w)} + \frac{\tilde{\call E}(\cdot,z) \call E(\cdot,w) - \call E(\cdot,z) \tilde{\call E}(\cdot,w)}{\xi(z)-\xi(w)} \Big)\bigg|_0^t.
\]
Letting $(\zeta_1,\zeta_2) \in \{(\xi,\psi),(\psi,\xi)\}$ one can then compute the integral in \eqref{dir_xitint}. Set
\begin{equation} \label{dir_spm}
\sigma^\pm(t,z,w) = \frac{\call L(t,z) + \call L(t,w)}{\xi(z) + \xi(w)} \pm \frac{\call L(t,z) - \call L(t,w)}{\xi(z) - \xi(w)}, \qquad \call L(t,\cdot) \defeq \frac{\call E(t,\cdot;\xi,\psi)}{\tilde{\call E}(t,\cdot;\xi,\psi)} ;
\end{equation}
then
\begin{equation} \label{dir_Xit}
2\,\hat\Xi(t,z,w) = \tilde\Psi(z,w) + \varphi(z,w) \sigma^+(t,z,w) + \xi(z)\xi(w) \sigma^-(t,z,w),
\end{equation}
where
\[
\tilde \Psi (z,w) = \Psi(z,w) - \varphi(z,w) \sigma^+(0,z,w) + \xi(z)\xi(w) \sigma^-(0,z,w).
\]
\end{rmk}

\begin{lem} \label{dir_lem:cl1}
Let $c=(c_{hk})_{h,k\in\N}$ be the solution to system~\eqref{dir_eqcz} on $[0,T]$. Then, for any $r \in (1,\varrho)$,
\begin{equation} \label{dir_estchkL}
\lVert c_{hk} \rVert_{\infty;[0,T]} \leq \frac{K(r,T)}{r^{h+k}} \quad \forall\, h,k \in \N,
\end{equation}
for some constant $K(r,T)$ depending only on $T$, $r$, $f$ and $g$. In particular, $c \in C^0([0,T]; \ell^1(\N^2))$ and the same is true for $\dot c$.\footnote{And for all higher order derivatives.}
\end{lem}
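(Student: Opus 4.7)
The strategy is to view the coefficients $c_{hk}(t)$ as Taylor coefficients of the generating function $\hat\Xi$ provided by \Cref{dir_lem:xi}, and then extract the desired geometric decay from Cauchy's integral formula on a bidisc.

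\medskip

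\noindent\textbf{Step 1 (Cauchy estimates for Taylor coefficients).} By \Cref{dir_lem:xi}, for each $t \in [0,T]$ the function $(z,w)\mapsto \hat\Xi(T-t,z,w)$ is holomorphic on $\varrho\bb D^2$, and the identity \eqref{dir_cderxi} says precisely that
\[
c_{hk}(t) = \frac{1}{h!\,k!}\,\frac{\partial^{h+k}\hat\Xi}{\partial z^h \partial w^k}(T-t,0,0)
\]
is the $(h,k)$-th Taylor coefficient of $\hat\Xi(T-t,\cdot,\cdot)$ at the origin. By the two-variable Cauchy integral formula, for any $r\in(1,\varrho)$,
\[
|c_{hk}(t)| \;\leq\; \frac{1}{r^{h+k}}\,\sup_{|z|=|w|=r} |\hat\Xi(T-t,z,w)|.
\]

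\medskip

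\noindent\textbf{Step 2 (Uniform boundedness of $\hat\Xi$ on $[0,T]\times\bar{\bb D}_r^{\,2}$).} The key point is that the explicit representation \eqref{dir_Xit} (equivalently \eqref{dir_xitint}) shows that $\hat\Xi$ is built from $\varphi$, $\Psi$, $\xi=\sqrt{\varphi(\cdot,0)}$, $\psi=\Psi(\cdot,0)$ and the quantities $\call E(t,\cdot;\xi,\psi)$, $\tilde{\call E}(t,\cdot;\xi,\psi)$. All of $\varphi,\Psi,\xi,\psi$ are analytic on $\varrho\bb D$ (for $\xi$ use that $\varphi(\cdot,0)\notin(-\infty,0)$ on $\varrho\bar{\bb D}$, which is exactly the $\varrho$-strong gathering of $f$), and the combinations $\call L(t,\cdot)$, $\sigma^{\pm}(t,\cdot,\cdot)$ in \eqref{dir_spm} are well defined and jointly continuous in $(t,z,w)\in I\times\varrho\bb D^2$ thanks to the compatibility assumption \eqref{hpcomp}, which prevents the denominator $\call E(t,\cdot;\psi,\xi) = \psi\sinh(t\xi)+\xi\cosh(t\xi)$ from vanishing and controls the removable singularity of $\sigma^-$ along $\xi(z)=\xi(w)$. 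Since $[0,T]\subseteq I$ and $\bar{\bb D}_r^{\,2}\Subset\varrho\bb D^2$, the function $\hat\Xi$ is continuous on the compact set $[0,T]\times\bar{\bb D}_r^{\,2}$, hence bounded there by a constant $K(r,T)$ depending only on $r$, $T$, $f$ and $g$. Inserting this into Step~1 yields \eqref{dir_estchkL}.

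\medskip

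\noindent\textbf{Step 3 ($\ell^1$-regularity of $c$ and $\dot c$).} Fixing any $r\in(1,\varrho)$ we obtain $\sum_{h,k\in\N}\|c_{hk}\|_{\infty;[0,T]}\leq K(r,T)\big(\tfrac{r}{r-1}\big)^2<\infty$, so the series defining $c(t)$ in $\ell^1(\N^2)$ is normally convergent on $[0,T]$; since each $c_{hk}\in C^0([0,T])$, the sum $t\mapsto c(t)$ is continuous into $\ell^1(\N^2)$ by the Weierstrass M-test. For $\dot c$, one option is to differentiate \eqref{dir_eqxi} in $t$: $\partial_t\hat\Xi$ remains analytic in $(z,w)\in\varrho\bb D^2$ and continuous in $t\in I$, so Steps~1--2 applied to $\partial_t\hat\Xi$ give $\|\dot c_{hk}\|_{\infty;[0,T]}\leq K'(r,T)\,r^{-(h+k)}$, and the same M-test argument concludes. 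Alternatively, one just uses equations \eqref{dir_eqcz} to express $\dot c_{hk}$ as $-c_{0h}c_{0k}+\sum_{j=0}^{h}c_{0,h-j}c_{jk}+\sum_{j=0}^{k}c_{0,k-j}c_{jh}-f_{hk}$ and bounds each summand using \eqref{dir_estchkL} together with $f\in\ell^1$; the same argument iterates to cover all higher time derivatives.

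\medskip

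\noindent\textbf{Main obstacle.} The only nontrivial point is Step~2: one must check that the apparent singularities of the explicit formula \eqref{dir_Xit} are harmless on the closed polydisc $\bar{\bb D}_r^{\,2}$ with $r<\varrho$, so that $\sup_{[0,T]\times\bar{\bb D}_r^{\,2}}|\hat\Xi|$ is finite. This is where the $\varrho$-strong gathering of $f$ (for $\xi$ to be holomorphic on $\varrho\bb D$) and the compatibility condition \eqref{hpcomp} on $I\supseteq[0,T]$ (for the denominators $\call E(t,\cdot;\psi,\xi)$ to stay away from zero uniformly in $t$) are used in an essential way; everything else is a routine Cauchy-estimate plus M-test argument.
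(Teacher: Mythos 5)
Your proposal is correct and follows essentially the same route as the paper: apply Cauchy's integral formula on a polydisc $\bar{\bb D}_r^{\,2}$ with $1<r<\varrho$ to the generating function $\hat\Xi$ from Lemma~\ref{dir_lem:xi}, bound the resulting supremum by continuity of $\hat\Xi$ on the compact set $[0,T]\times\bar{\bb D}_r^{\,2}$ (this is where strong gathering and compatibility enter), and deduce $\ell^1$-summability from the geometric decay and higher-order regularity by differentiating the ODE or the generating-function equation. The paper states the Cauchy bound over $\de\call Q_r$ and handles $\dot c$ via the ODE system, exactly as in your alternative in Step~3.
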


\begin{proof}
By the strong gathering assumption, the function $\hat \Xi(t,\cdot,\cdot)$ given in \Cref{dir_lem:xi} is analytic in a neighborhood of $\call Q_r \defeq \barr{\bb D}_r \times \barr{\bb D}_r$, with $r \in (1,\varrho)$. Then, by Cauchy's theorem on derivatives,
\begin{equation} \label{dir_estchk}
\lVert c_{hk} \rVert_{\infty;[0,T]} \leq \frac1{r^{h+k}} \max_{[0,T] \times \de \call Q_r} \lvert \hat\Xi \rvert.
\end{equation}
This proves that $c \in C^0([0,T]; \ell^1(\N^2))$, and further regularity is easily proven by induction by exploiting system~\eqref{dir_eqcz}.
\end{proof}

\begin{lem} \label{dir_lem:bf}
Let $c$ be the solution to~\eqref{dir_eqcz} on $[0,T]$. The functions $u^i$ given by \eqref{dir_uLQl} are well-defined for $(t,x) \in [0,T] \times \call X$, differentiable with respect to $t$, and twice Fréchet-differentiable with respect to $x$.
\end{lem}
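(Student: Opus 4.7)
The plan is to exploit the geometric decay $\|c_{hk}\|_{\infty;[0,T]} \leq K r^{-(h+k)}$, provided by \Cref{dir_lem:cl1} for any $r \in (1,\varrho)$, to justify every manipulation of the series defining $u^i$. For any $x \in \call X$ with $\|x\|_\infty \leq R$, one has the uniform pointwise bound
\[
|c_{hk}(t)\pair{x^{h+i}}{x^{k+i}}_{\R^d}| \leq d R^2 K r^{-(h+k)},
\]
whose right-hand side is summable over $(h,k) \in \N^2$. Hence the series in \eqref{dir_uLQl} converges absolutely and normally on $[0,T] \times \{x \in \call X : \|x\|_\infty \leq R\}$, and the trace-integral term is well-defined because $\tr c(s) = \sum_k c_{kk}(s)$ is absolutely convergent and continuous in $s$. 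In particular, $u^i$ is well-defined and continuous.

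For the $t$-derivative, \Cref{dir_lem:cl1} ensures $\dot c \in C^0([0,T]; \ell^1(\N^2))$ with an analogous geometric bound (obtained by differentiating \eqref{dir_eqcz} and using the same product-decay algebra). The same normal-convergence argument then legitimizes termwise differentiation and yields
\[
\partial_t u^i(t,x) = \frac12 \sum_{h,k \in \N} \dot c_{hk}(t)\pair{x^{h+i}}{x^{k+i}}_{\R^d} - \tr c(t),
\]
which is continuous in $(t,x)$.

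For Fréchet differentiability in $x$, using the symmetry $c_{hk} = c_{kh}$ I would expand
\[
u^i(t,x+y) - u^i(t,x) = \sum_{h,k \in \N} c_{hk}(t)\pair{x^{h+i}}{y^{k+i}}_{\R^d} + \frac12 \sum_{h,k \in \N} c_{hk}(t)\pair{y^{h+i}}{y^{k+i}}_{\R^d}.
\]
The first sum defines a continuous linear functional of $y$ whose operator norm is controlled by $d \|x\|_\infty \|c(t)\|_{\ell^1(\N^2)}$, while the second term is bounded by $\frac{d}{2}\|c(t)\|_{\ell^1(\N^2)}\|y\|_\infty^2 = o(\|y\|_\infty)$. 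This establishes Fréchet differentiability of $u^i(t,\cdot)$, with derivative equal to the first sum. Applying the same expansion to this (now bilinear) functional identifies
\[
D_x^2 u^i(t,x)[y,z] = \sum_{h,k \in \N} c_{hk}(t)\pair{y^{h+i}}{z^{k+i}}_{\R^d}
\]
as the second Fréchet derivative, whose continuity follows from the uniform $\ell^1$-bound on $c(t)$.

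No genuine obstacle arises: the only delicate point is the justification of termwise differentiation, which is handled throughout by the normal convergence inherited from the geometric decay of the coefficients $c_{hk}$ and $\dot c_{hk}$. Everything else is routine bookkeeping, and the same template would deliver $C^k$ regularity in $t$ and higher-order Fréchet regularity in $x$ if needed.
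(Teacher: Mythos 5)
Your proposal is correct and follows essentially the same route as the paper: well-posedness and normal convergence from the geometric decay of \Cref{dir_lem:cl1}, $t$-differentiability from the $\ell^1$-control of $\dot c$ together with the fundamental theorem of calculus for the trace-integral term, and Fréchet differentiability by expanding $u^i(t,x+y)-u^i(t,x)$ exactly into a linear term plus a quadratic remainder that is $O(\|y\|_\infty^2)$. The paper additionally observes that a shift of coordinates reduces all $i$ to $i=0$, but this is cosmetic and your uniform-in-$i$ bounds accomplish the same.
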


\begin{proof}
Since a suitable shift of coordinates in $\call X$ transforms $u^i(t,\cdot)$ into $u^0(t,\cdot)$, it is sufficient to prove the result for $u^0$. The well-posedness follows from \Cref{dir_lem:cl1}, and the differentiability with respect to $t$ follows from the same lemma and the fundamental theorem of calculus. The differentiability with respect to $x$ follows again from \Cref{dir_lem:cl1}, as it is trivial to see that for any $h \in \call X$ small one has
\[
u^0(t,x+h) - u^0(t,x) - \pair{(c(t) \otimes I_d)x}{h}_{\call X} - \frac12 \pair{(c(t) \otimes I_d)h}{h}_{\call X} = 0,
\]
where, with an obvious notation, $\pair{(c(t) \otimes I_d)y}{z}_{\call X} = \sum_{h,k \in \N} c_{hk}(t) \pair{y^h}{z^k}_{\R^d}$.
\end{proof}

\begin{lem}
Let $u^i$ be given by \eqref{dir_uLQl}, where $c$ is the solution to \eqref{dir_eqcz} on $[0,T]$. Let $\call B_R$ be the closed ball of radius $R$ in $\call X$. Then, for any $r \in (1,\varrho)$,
\[
\sum_{j \in \Z \setminus \{i\}} \left\lVert D_ju^j D_ju^i \right\rVert_{\infty;[0,T]\times \call B_R} \leq \frac1{r-1} \bigg( \frac{R K r}{r-1} \bigg)^2,
\]
where $K=K(r,T)$ is the constant appearing in \Cref{dir_lem:cl1}.
\end{lem}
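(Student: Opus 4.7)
The plan is to exploit the explicit form \eqref{dir_uLQl} of $u^i$ together with the decay estimate \eqref{dir_estchkL}, so that the series in question reduces to a geometric series in $r^{-1}$.

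First, I would compute $D_j u^i$ explicitly. Differentiating \eqref{dir_uLQl} with respect to $x^j$ and using the symmetry $c_{hk} = c_{kh}$, one obtains
\[
D_j u^i(t,x) = \sum_{k \in \N} c_{j-i, k}(t)\, x^{k+i},
\]
with the convention that $c_{hk} = 0$ whenever $h < 0$ (which is consistent with the QSD structure where $c_{hk}$ is indexed over $\N^2$). In particular $D_j u^i \equiv 0$ for $j < i$, so the series in the statement is in fact supported on indices $j > i$. For $x \in \call B_R$ we have $|x^{k+i}|_{\R^d} \leq R$ for every $k$, so by \Cref{dir_lem:cl1}
\[
|D_j u^i(t,x)|_{\R^d} \leq R \sum_{k \in \N} |c_{j-i,k}(t)| \leq R \, K(r,T) \sum_{k \in \N} \frac{1}{r^{j-i+k}} = \frac{R K r}{(r-1)\, r^{j-i}}.
\]
The analogous estimate applied to $j=i$ in the indexing of $D_j u^j$ (i.e.\ $h=0$) yields $|D_j u^j(t,x)|_{\R^d} \leq RKr/(r-1)$ uniformly in $j$.

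Combining these two bounds with Cauchy--Schwarz in $\R^d$,
\[
\| D_j u^j \cdot D_j u^i \|_{\infty;[0,T]\times\call B_R} \leq \frac{(RKr)^2}{(r-1)^2}\cdot \frac{1}{r^{j-i}} \qquad (j > i),
\]
and summing the geometric series $\sum_{m\geq 1} r^{-m} = (r-1)^{-1}$ gives precisely
\[
\sum_{j \in \Z \setminus \{i\}} \| D_j u^j\, D_j u^i \|_{\infty;[0,T]\times\call B_R} \leq \frac{1}{r-1}\bigg(\frac{RKr}{r-1}\bigg)^{\!2},
\]
as claimed.

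There is no real obstacle here: the only minor care required is to check that the directionality built into the QSD ansatz (i.e.\ $c_{hk}=0$ for $h<0$) makes $D_j u^i$ vanish for $j<i$, so that after the shift $m = j-i \geq 1$ the series is a pure geometric series, and to track the factor $r/(r-1)$ coming from summing $\sum_{k\geq 0} r^{-k}$ twice (once for each factor $D_j u^{\cdot}$).
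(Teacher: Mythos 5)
Your proof is correct and follows essentially the same route as the paper: you compute $D_ju^i$ from the QSD ansatz, observe it vanishes for $j<i$, invoke the decay bound of Lemma \ref{dir_lem:cl1} (estimate \eqref{dir_estchkL}) to bound each factor by a geometric tail, and sum the resulting geometric series in $j-i$. The only cosmetic difference is that the paper bounds the product $|D_ju^j\,D_ju^i|$ in one step by a double sum $R^2\sum_{h,k}|c_{0h}||c_{j-i,k}|$, whereas you bound each factor separately and then multiply — the arithmetic is identical.
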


\begin{proof}
We have $D_j u^i(t,x) = 0$ if $j < i$, and $D_j u^i(t,x) = \sum_{h \in \N} c_{j-i,h}(t) x^h$ if $j \geq i$. Then, for $x \in \call B_R$, and $r \in (1,\varrho)$ fixed, estimate~\eqref{dir_estchkL} yields
\[
\left\lvert D_ju^j(t,x) D_ju^i(t,x) \right\rvert \leq R^2 \sum_{h,k \in \N} \lvert c_{0h}(t) \rvert \lvert c_{j-i,k}(t) \rvert \leq \bigg( \frac{RKr}{r-1} \bigg)^2 \frac{1}{r^{j-i}},
\]
for all $t \in [0,T]$. The thesis now follows by computing $\sum_{j > i} r^{i-j}$.
\end{proof}

Since, by construction, choosing $u^i$ as in \eqref{dir_uLQl} satisfying \ref{dir_s1} and \ref{dir_s2} in \Cref{dir_def:cs} yields a classical solution, the proof of \Cref{dir_wp} is complete.

\subsection{Almost-optimal controls for the \texorpdfstring{$N$}{N}-player game}\label{s:almostnash}

We did not prove long-time well-posedness of the Nash system for the $N$-player game, with $N > \ell$ finite; nevertheless, in \Cref{coreNE} below we show that on the horizon $[0,T]$ the infinite-dimensional optimal control for the $i$-th player, given by $\bar\alpha^{*i}(t,X_t) \defeq \sum_{j\geq i} c_{0,j-i}(t) X_t^j$,  if suitably ``projected'' onto $(\R^d)^N$ provides an $\epsilon$-Nash equilibrium for the $N$-player game, with $\epsilon \to 0$ as $N \to \infty$. In claiming so, we consider classes of admissible controls in the sense of the following definition.

\begin{defn}\label{admissibleL}
Let $R,L \geq 0$. A control $\alpha \colon [0,T] \times (\R^d)^N \to (\R^d)^N$ in feedback form belongs to $\call A_{R,L}$ if, for all $t \in [0,T]$, $x,y \in (\R^d)^N$,
\[
 |\alpha(t,0)| \leq R, \qquad |\alpha(t,x) - \alpha(t,y)| \leq L|x-y|.
\]
The Lipschitz constant $L$ is said to be \emph{admissible} if $L \geq \norm{c_{0\cdot}}_{C^0([0,T];\ell^1(\N))}$.
\end{defn}

\begin{rmk} \label{rmksolvSDE}
For such controls, it is known (cf., e.g., \cite[Theorems~9.1 and~9.2]{baldi}) that
\begin{equation} \label{SDE_dN}
\begin{cases}
\di X_t = \alpha(t,X_t)\,\di t + \sqrt2 \,\di B_t & t \in [0,T]\\
X_0 = x_0 \in (\R^d)^N
\end{cases}
\end{equation}
has a unique solution, satisfying $\bb E\,\sup_{[0,T]} |X|^2 \leq C$ where $C$ is a locally bounded function of $R$, $L$ and $T$, directly proportional to $1+|x_0|^2$.
\end{rmk}

\begin{thm} \label{LQ_eNE}
Consider the $N$-player game on $[0,T]$ with time evolution of the state of the players given by \eqref{SDE_dN} and costs given by \eqref{dir_costi} with $f^0,g^0 \geq 0$. Let Assumptions \textnormal{(\hyperlink{AS}{$\star$})} be in force with $[0,T] \subseteq I$.
Let $c$ solve \eqref{dir_eqcz} and define the control $\alpha^*$ by
\[
- \alpha^{*i}(t,X^0_t,\dots,X^{N-1}_t) \defeq \sum_{j=0}^{N-i-1} c_{0j}(t) X^{j+i}_t + \sum_{j=N-i}^{N-1} c_{0j}(t) X^{j+i-N}_t, \qquad i \in [[N]].
\]
Then, for any $R \geq 0$ and admissible $L$, for any $i \in [[N]]$ and any $(\alpha^{*,-i},\psi) \in \call A_{R,L}$,\footnote{With this notation we mean that all components are those of $\alpha^*$ but the $i$-th one, which is a suitable $\R^d$-valued function $\psi = \psi(t,x)$. Note that $\alpha^* \in \call A_{R,L}$ for any $R$ and admissible $L$.}
\[
J^i(\alpha^*) \leq J^i((\alpha^{*,-i},\psi)) + \hat C(\delta^M + (\delta^{-M} + N ) \delta^N) \qquad \forall\, \delta \in (\varrho^{-1},1), \ \forall\;\! M \geq \ell,
\]
where $\ell$ is the dimension appearing in Assumptions \textnormal{(\hyperlink{AS}{$\star$})} and the constant $\hat C$ is a locally bounded function of $R$, $L$, $T$ and $\delta$, directly proportional to $1+|x_0|^2$.
\end{thm}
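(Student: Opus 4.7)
The plan is to perform a verification argument at the level of the HJB equation for player $i$'s single-agent control problem, with the other players' feedbacks frozen to $\alpha^{*,-i}$. The key object is the candidate quasi-value function
\[
\tilde u^{i,M}(t,x) \defeq \tfrac{1}{2} \sum_{h,k \in [[M]]} c_{hk}(t)\, \langle x^{[h+i]_N}, x^{[k+i]_N}\rangle + d \int_t^T \sum_{h \in [[M]]} c_{hh}(s)\,\di s,
\]
where $c$ is the solution to \eqref{dir_eqcz} from \Cref{dir_lem:uniquesolc} and $x \in (\R^d)^N$; this is obtained from \eqref{dir_uLQl} by truncating at level $M$ and wrapping indices cyclically. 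Since $M \geq \ell$ and $g_{hk} = 0$ for $h\vee k \geq \ell$, the change of variables $p = [h+i]_N$, $q = [k+i]_N$ together with the shift-invariance of $g^i$ shows $\tilde u^{i,M}(T,\cdot) \equiv \bar G^i$ identically on $(\R^d)^N$.

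The technical core is the estimate of the HJB residual
\[
R^{i,M} \defeq -\partial_t \tilde u^{i,M} - \Delta \tilde u^{i,M} + \tfrac{1}{2} |D_i \tilde u^{i,M}|^2 - \sum_{j \neq i} \alpha^{*j} \cdot D_j \tilde u^{i,M} - \bar F^i.
\]
A direct computation yields $D_p \tilde u^{i,M}(t,x) = \sum_{k \in [[M]]} c_{[p-i]_N, k}(t)\, x^{[k+i]_N}$ whenever $[p-i]_N \in [[M]]$ and zero otherwise, and (for $M \leq N$) $\Delta \tilde u^{i,M} = d \sum_{h \in [[M]]} c_{hh}$ exactly. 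Substituting for $-\dot c_{hk}$ via \eqref{dir_eqcz} produces three cancellations: the $f_{hk}$ term reconstitutes $\bar F^i$, the diagonal $-c_{0h}c_{0k}$ contribution cancels $\tfrac{1}{2}|D_i \tilde u^{i,M}|^2$, and the convolutions $\sum_{j \geq 1} c_{0,h-j}c_{jk}$ cancel the short-range ($k' < M-m$) part of $-\sum_{j\neq i}\alpha^{*j}\cdot D_j \tilde u^{i,M}$. What remains is the explicit boundary sum
\[
R^{i,M}(t,x) = \sum_{m=1}^{M-1} \sum_{k'=M-m}^{N-1} \sum_{k=0}^{M-1} c_{m,k}(t)\, c_{0,k'}(t)\, \langle x^{[m+k'+i]_N}, x^{[k+i]_N}\rangle,
\]
which, via $|c_{hk}| \leq K(\delta,T)\, \delta^{h+k}$ from \Cref{dir_lem:cl1} and careful tracking of the wrap-around tails $c_{0,k'}$ for $k' \geq N$ missing from $\alpha^{*j}$ relative to the infinite-game feedback, is bounded pointwise by $C(\delta,T)(1+|x|^2)(\delta^M + (\delta^{-M} + N)\delta^N)$. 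A parallel tail estimate yields the gradient mismatch $|\alpha^{*i}(t,x) + D_i \tilde u^{i,M}(t,x)| \leq C(\delta,T)\, \delta^M\, (1+|x|)$.

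Once these estimates are available, the verification proceeds by standard means. Given $(\alpha^{*,-i},\psi) \in \call A_{R,L}$, the SDE \eqref{SDE_dN} admits a unique solution $X$ with $\bb E \sup_{[0,T]}|X|^2 \leq C(R,L,T)(1+|x_0|^2)$ by \Cref{rmksolvSDE}. Applying It\^o's formula to $\tilde u^{i,M}(\cdot,X_\cdot)$, substituting $\partial_t \tilde u^{i,M} + \Delta \tilde u^{i,M}$ from the HJB-with-residual identity, completing the square in $\psi$, and using $\tilde u^{i,M}(T,X_T) = \bar G^i(X_T)$, expectation produces the exact identity
\[
J^i\big((\alpha^{*,-i},\psi)\big) = \tilde u^{i,M}(0,x_0) + \bb E \int_0^T \tfrac{1}{2} |\psi + D_i \tilde u^{i,M}|^2\, \di t - \bb E \int_0^T R^{i,M}(t,X_t)\, \di t.
\]
Dropping the nonnegative square yields $J^i((\alpha^{*,-i},\psi)) \geq \tilde u^{i,M}(0,x_0) - \bb E\int_0^T |R^{i,M}|\, \di t$, while plugging $\psi = \alpha^{*i}$ and bounding the square via the mismatch estimate yields $J^i(\alpha^*) \leq \tilde u^{i,M}(0,x_0) + \hat C\,\delta^{2M}(1+|x_0|^2) + \bb E\int_0^T |R^{i,M}|\, \di t$. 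Subtracting, the two $\tilde u^{i,M}(0,x_0)$'s cancel, and combining with $\bb E\sup|X|^2 \lesssim 1+|x_0|^2$ delivers the claim.

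The main obstacle is the bookkeeping of cancellations in the residual: the equation \eqref{dir_eqcz} is precisely designed to encode the structure of the infinite-dimensional Nash system, but the truncation at level $M$ in the candidate together with the truncation at level $N$ in the feedbacks $\alpha^{*j}$ break this cancellation along specific "boundary" index ranges, and one has to check carefully which ranges cancel. The three error scales $\delta^M$, $\delta^{N-M} = \delta^{-M}\delta^N$, and $N\delta^N$ arise respectively from the pure truncation tail, from the interaction of truncation with the cyclic wrap in the convolution variable $k'$, and from summing over the $N-1$ other players the wrap-around corrections in $[k'+j]_N$; keeping them separate is what allows the free parameter $M \geq \ell$, which one then optimizes (e.g.\ $M \asymp N/2$) to obtain exponential-in-$N$ smallness.
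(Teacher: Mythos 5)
Your approach is a genuinely different route from the paper's. The paper works with the \emph{infinite-dimensional} QSD solution $u$ and performs a pathwise comparison: it starts from the convexity of $G$ (that is, $f^0,g^0\geq0$) to bound $G(X^*_T)-G(X_T)\leq\pair{Du^0(T,X^*_T)}{X^*_T-X_T}$, applies It\^o's formula to the bilinear form $\sum_{j,k}c_{jk}(\cdot)\pair{X^{*k}_\cdot}{X^{*j}_\cdot-X^j_\cdot}$, substitutes the ODE \eqref{dir_eqcz}, and closes the estimate with the convexity of $F$ and of $\frac12|\cdot|^2$; the boundary error $\call E$ is the mismatch between the infinite-index convolution built into $c$ and the cyclic, $N$-index sums defining $\alpha^*$. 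You instead construct a \emph{finite-dimensional}, cyclically wrapped truncation $\tilde u^{i,M}$, compute the residual of the single-player HJB with the opponents frozen, and run a standard verification (It\^o plus completing the square). This is cleaner in two respects: it avoids the convexity hypothesis on $f^0,g^0$ entirely (you only complete a square, never compare $G$ or $F$ along two trajectories), and it yields the exact identity $J^i((\alpha^{*,-i},\psi))=\tilde u^{i,M}(0,x_0)+\bb E\int\frac12|\psi+D_i\tilde u^{i,M}|^2-\bb E\int R^{i,M}$ from which the $\epsilon$-Nash bound drops out by subtraction. Both arguments rest on the same key input, the decay $|c_{hk}|\lesssim r^{-(h+k)}$ from \Cref{dir_lem:cl1}, and both need the bookkeeping of the wrap-around index ranges.

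Two small corrections you should make. First, the residual as you have written it,
\[
R^{i,M}=\sum_{m=1}^{M-1}\sum_{k'=M-m}^{N-1}\sum_{k=0}^{M-1}c_{mk}\,c_{0k'}\pair{x^{[m+k'+i]_N}}{x^{[k+i]_N}},
\]
is bounded in absolute value by $C(\delta,T)\,(M-1)\,\delta^M\,|x|^2$: the $\delta^N$ and $N\delta^N$ pieces you list do not appear in this expression (they are an artefact of the paper's decomposition, which carries additional wrap-around terms precisely because it uses the untruncated infinite-dimensional $u^0$). This is actually a \emph{sharper} bound than what is needed, and $M\delta^M\leq C_{\delta,\delta'}(\delta')^M$ for any $\delta'\in(\delta,1)$ absorbs the linear factor into a renamed $\delta$; but state this explicitly rather than asserting the paper's error shape. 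Second, be explicit that by the shift-invariance of $f^i,g^i$ and the circulant structure of $D\alpha^*$ one may take $i=0$, which eliminates the index wrapping inside $\tilde u^{i,M}$ and simplifies the terminal-condition and Laplacian computations (this is also what the paper does at the outset).
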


\begin{proof}
Since $f^i$ and $g^i$ are shift-invariant and $\alpha^*$ is linear in the state variable with $D\alpha^*$ circulant, without loss of generality we can prove the thesis for $i = 0$. We denote by $X^*$ the solution to \eqref{SDE_dN} when $\alpha = \alpha^*$ and by $X$ the solution to \eqref{SDE_dN} when $\alpha = (\psi, \alpha^{*1},\dots,\alpha^{*,N-1}) \eqdef \hat\alpha^*$. We wish to estimate from above the quantity
\begin{equation} \label{diffJ} \begin{multlined}[t][.85\displaywidth]
J^0(\alpha^*) - J^0(\hat\alpha^*) = \frac12 \, \bb E\bigg[ \int_0^T \!\! \big( |\alpha^{*0}(t,X^*_t)|^2 - |\psi(t,X_t)|^2 + F(X^*_t) - F(X_t) \big)\,\di t \\
+ G(X^*_T) - G(X_T) \bigg],
\end{multlined} \end{equation}
where $F \defeq \pair{F^0\cdot}{\cdot}$ and $G \defeq \pair{G^0\cdot}{\cdot}$.
Let $u$ be the QSD solution to \eqref{dir_NS} on $[0,T] \times \call X$. Since $G$ is convex and $DG = Du^0(T,\cdot)$,
\[\begin{split}
G(X^*_T) - G(X_T) &\leq \pair{Du^0(T,X^*_T)}{X^*_T - X_T}_{\R^{dN}} = \sum_{j=0}^{N-1} \sum_{k = 0}^{\ell-1} c_{jk}(T) \pair{X^{*k}_T}{X^{*j}_T-X^j_T}_{\R^d} \\
&= \sum_{j=0}^{N-1} \sum_{k = 0}^{\ell-1} \bigg( \int_0^T \dot c_{jk}(t) \pair{X^{*k}_t}{X^{*j}_t-X^j_t}_{\R^d}\, \di t \\
&\quad\ + c_{jk}(t) \pair{\di X^{*k}_t}{X^{*j}_t-X^j_t}_{\R^d} + c_{jk}(t) \pair{X^{*k}_t}{\di(X^{*j}_t-X^j_t)}_{\R^d} \bigg).
\end{split}\]
Note that we can replace $\ell$ with any $M \geq \ell$ because $c_{jk}(T) = g_{jk}(T) = 0$ if $k > \ell$. 
As $c$ solves \eqref{dir_eqcz} we obtain
\[
\begin{split}
G(X^*_T) - G(X_T) &\leq \sum_{j=0}^{N-1} \sum_{k = 0}^{M-1} \bigg( \int_0^T \sum_{h = 1}^{j} c_{0,j-h}(t) c_{hk}(t) \pair{X^{*k}_t}{X^{*j}_t-X^j_t}_{\R^d} \, \di t \\
&\quad + \int_0^T \sum_{h = 0}^{k} c_{0,k-h}(t) c_{hj}(t) \pair{X^{*k}_t}{X^{*j}_t-X^j_t}_{\R^d} \,\di t \\
&\quad - f_{jk} \int_0^T \pair{X^{*k}_t}{X^{*j}_t-X^j_t}_{\R^d}\,\di t \\
&\quad+ \int_0^T c_{jk}(t) \pair{\alpha^{*k}(t,X^*_t)}{X^{*j}_t-X^j_t}_{\R^d} \,\di t \\
&\quad + \int_0^T c_{jk}(t) \pair{X^{*k}_t}{\alpha^{*j}(t,X^*_t)-\hat\alpha^{*j}(t,X_t)}_{\R^d} \,\di t \bigg) + Z_T,
\end{split}
\]
where $(Z_t)_{0\leq t\leq T}$ is a martingale starting from $0$. Straightforward computations show that, omitting the dependence on $t$,
\[
- \sum_{k=0}^{M-1} c_{jk} \alpha^{*k}(X^*) = \sum_{k=0}^{M-1}\sum_{h=0}^{k} c_{0,k-h} c_{hj} {X^{*k}} + \sum_{h=N-M+1}^{N-1} \sum_{k=N-h}^{M-1} c_{0h} c_{jk} X^{*,h+k-N}
\]
and
\[ \begin{split}
- \sum_{j=0}^{N-1} c_{jk}(\alpha^{*j}(X^*)-\hat\alpha^{*j}(X)) &= - c_{0k}(\alpha^{*0}(X^*) - \psi(X)) + \sum_{j=0}^{N-1} \sum_{h=1}^{j} c_{hk}c_{0,j-h} (X^{*j}-X^j) \\
&\quad\ + \sum_{h=N}^{2N-2} \sum_{j=h-N+1}^{N-1} c_{jk} c_{0,h-j} (X^{*,h-N} - X^{h-N});
\end{split}\]
therefore,
\begin{equation} \label{estDeltaG} \begin{split}
G(X^*_T) - G(X_T) &\leq - \sum_{k=0}^{M-1} \int_0^T c_{0k}(t) \pair{X^{*k}_t}{\alpha^{*0}(X^*) - \psi(X)}_{\R^d} \, \di t \\
&\quad\ - \sum_{j=0}^{N-1} \sum_{k = 0}^{M-1} f_{jk} \int_0^T \pair{X^{*k}_t}{X^{*j}_t-X^j_t}_{\R^d}\,\di t - \int_0^T \call E_t \,\di t,
\end{split}\end{equation}
where we have set
\[  \begin{split}
\call E &\defeq \sum_{j=0}^{N-1} \sum_{h=N-M+1}^{N-1} \sum_{k=N-h}^{M-1} c_{0h} c_{jk} \pair{X^{*,h+k-N}}{X^{*j}-X^j}_{\R^d} \\
&\quad\ + \sum_{k=0}^{M-1} \sum_{h=N}^{2N-2} \sum_{j=h-N+1}^{N-1} c_{jk} c_{0,h-j} \pair{X^{*k}}{X^{*,h-N} - X^{h-N}}_{\R^d}.
\end{split}\]
Note now that by the convexity of $\frac12|\cdot|^2$, omitting the dependence on $t$,
\begin{equation} \label{conv12n}
\frac12 |\alpha^{*0}(X^*)|^2 - \frac12 |\psi(X)|^2 - \pair{\alpha^{*0}(X^*)}{\alpha^{*0}(X^*) - \psi(X)}_{\R^d} \leq 0
\end{equation}
and by the convexity of $F$
\begin{equation} \label{proofconvF}
F(X^*) - F(X) - \pair{DF(X^*)}{X^*-X}_{\R^d} \leq 0.
\end{equation}
Using \eqref{estDeltaG}, \eqref{conv12n} and \eqref{proofconvF} in \eqref{diffJ} we obatin
\begin{equation} \label{estJJ1}
J^0(\alpha^*) - J^0(\hat\alpha^*) \leq \bb E \bigg[ \sum_{k=\ell}^{N-1} \int_0^T c_{0k}(t) \pair{X^{*k}_t}{\alpha^{*0}(X^*) - \psi(X)}_{\R^d} \, \di t - \int_0^T \call E_t \,\di t \bigg].
\end{equation}
As, whenever $\{ Y,Z \} \subseteq \{ X^*,X \}$,
\[
\sup_{j,k \in [[N]]} \bb E \int_0^T |\pair{Y^{k}_t}{Z^{j}_t}_{\R^d}| \,\di t \leq T C,
\]
where $C = C(|x_0|,R,L,T)$ is the constant appearing in \Cref{rmksolvSDE}, we have
\[ \begin{split}
\bigg| \,\bb E \int_0^T \call E_t \, \di t\, \bigg| &\leq 2TC \bigg( \norm{c}_{C^0([0,T];\ell^1(\N^2))} \sum_{h \geq N-M} \norm{c_{0h}}_{\infty;[0,T]} \\
&\quad\ + \sum_{h\geq N} \sum_{j=0}^{N-1} \norm{c_{j\cdot}}_{C^0([0,T];\ell^1(\N))} \norm{c_{0,h-j}}_{\infty;[0,T]} \bigg),
\end{split}\]
so that by \Cref{dir_lem:cl1}, for any $\delta \in (\varrho^{-1},1)$, there exists $K > 0$ such that
\begin{equation} \label{estEiE}
\bigg| \,\bb E \int_0^T \call E_t \, \di t\, \bigg| \leq \tilde C ( \delta^{-M} + N ) \delta^N, \quad \tilde C \defeq \frac{2TCK^2}{(1-\delta)^3}.
\end{equation}
On the other hand,
\begin{equation} \label{estprimap}
\sup_k \, \bb E \int_0^T |\pair{X^{*k}_t}{\alpha^{*0}(X^*) - \psi(X)}_{\R^d}| \leq 2R\sqrt{TC} + 2LC,
\end{equation}
where we used again that $\alpha^*, \hat\alpha^* \in \call A_{R,L}$. Therefore, from \eqref{estJJ1}, \eqref{estEiE}, \eqref{estprimap} and \Cref{dir_lem:cl1} we obtain
\[
J^0(\alpha^*) - J^0(\hat\alpha^*) \leq  \hat C (\delta^M + (\delta^{-M} + N ) \delta^N), \quad \hat C \defeq 2(R\sqrt{TC} + LC)\frac{K}{1-\delta} \vee \tilde C.
\]
This concludes the proof.
\end{proof}

\begin{cor} \label{coreNE}
Let $\Omega \subset \R^d$ bounded, and assume the same as in \Cref{LQ_eNE}, but with $x_0 \in \Omega^N$. Let $R \geq 0$ and $L$ be admissible (in the sense of Definition \ref{admissibleL}); consider as admissible those controls belonging to $\call A_{R,L}$. Then, for any $\epsilon > 0$ there exists $N_0 = N_0(\epsilon,R,L,\varrho, \ell, \Omega)$ such that if $N \geq N_0$ then the control $\alpha^*$ provides an $\epsilon$-Nash equilibrium of the game.
\end{cor}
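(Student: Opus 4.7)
The plan is to invoke \Cref{LQ_eNE} directly and calibrate $M$ as a function of $N$ so that all three error contributions decay to zero. Since $\Omega \subset \R^d$ is bounded and $x_0 \in \Omega^N$, we have $|x_0|^2 \leq N D_\Omega^2$ with $D_\Omega \defeq \sup_{x\in\Omega}|x|$. Because the constant $\hat C$ in \Cref{LQ_eNE} is directly proportional to $1+|x_0|^2$ and otherwise locally bounded in $R, L, T, \delta$, this yields the polynomial bound $\hat C \leq C_1 N$ for some $C_1 = C_1(R, L, T, \delta, \Omega)$, valid for $N \geq 1$.

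Next, I would fix once and for all some $\delta \in (\varrho^{-1}, 1)$, say $\delta \defeq (1+\varrho^{-1})/2$, so that $C_1$ becomes a function of $R, L, T, \varrho, \Omega$ only. The error estimate in \Cref{LQ_eNE} then reads
\[
J^i(\alpha^*) - J^i((\alpha^{*,-i}, \psi)) \leq C_1 N\bigl(\delta^M + \delta^{N-M} + N\delta^N\bigr).
\]
Both $M$ and $N-M$ must diverge for the right-hand side to vanish (in balance with the $N$ prefactor coming from $\hat C$), so the natural choice is $M \defeq \lfloor N/2\rfloor$, which also satisfies $M \geq \ell$ as soon as $N \geq 2\ell$. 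With this choice the bound collapses to $2 C_1 N\delta^{N/2} + C_1 N^2\delta^N$, which is exponentially small in $N$.

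Therefore, given $\epsilon > 0$, one picks $N_0 = N_0(\epsilon, R, L, \varrho, \ell, \Omega)$ so large that the last expression is smaller than $\epsilon$ for every $N \geq N_0$. Since the estimate in \Cref{LQ_eNE} is uniform over $i \in [[N]]$ and over admissible deviations $\psi \in \call A_{R,L}$, this is precisely the definition of an $\epsilon$-Nash equilibrium. There is no real obstacle beyond \Cref{LQ_eNE} itself; the only subtlety is that $\hat C$ grows linearly in $N$ through $|x_0|^2$ (since $x_0 \in \Omega^N$), which is what forces the proportional scaling $M \asymp N$ rather than allowing $M$ to be chosen independently of $N$.
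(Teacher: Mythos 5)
Your proof is correct and follows essentially the same route as the paper: bound $\hat C$ by $C_1 N$ using $x_0 \in \Omega^N$, fix $\delta \in (\varrho^{-1},1)$, and choose $M = M(N)$ so that all three terms decay. The only (inessential) difference is your choice $M = \lfloor N/2 \rfloor$ versus the paper's $M = \lfloor \sqrt{N} \rfloor$; both make $\delta^M$, $\delta^{N-M}$, and $N\delta^N$ decay fast enough to beat the $N$ prefactor, and your choice incidentally shows that the paper's stated condition $M = o(N)$ is not actually needed — only $M \to \infty$ and $N - M \to \infty$ superlogarithmically.
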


\begin{proof}
It suffices to require that
\begin{equation} \label{condforeps}
\hat C (\delta^M + (\delta^{-M} + N ) \delta^N) \leq \epsilon \quad \forall\, N \geq N_0,
\end{equation}
where, for instance, one sets $\delta = \frac12(1+\varrho^{-1})$. Choose $M = M(N)$ such that $M \to \infty$ and $M = o(N)$ as $N \to \infty$; for example, $M = \lfloor \sqrt{N} \rfloor$ for all $N > \ell^2$. Since $x_0 \in \Omega^N$ there exists a constant $\hat C'$, independent of $N$, such that $\hat C \leq N \hat C'$. Then the conclusion follows from the fact that the left-hand side of \eqref{condforeps} goes to $0$ as $N \to \infty$.
\end{proof}

\subsection{The ergodic Nash system}

Consider now the related ergodic problem, with costs
\[
\bar J^i(\alpha) = \liminf_{T \to +\infty} \,\frac1{2T}\, \bb E \int_0^T \!\big( \lvert \alpha^i \rvert^2 + \pair{F^i X}{X} \big),
\]
where the dynamics and the assumptions on $F^i$ are the same as before. The corresponding Nash system reads
\begin{equation} \label{dir_NSe}
\lambda^i - \Delta v^i + \frac12 \lvert D_i v^i \rvert^2 + \sum_{j\neq i} D_j v^j D_j v^i = \bar F^i \qquad \text{on} \ \call Y,
\end{equation}
where $\call Y$ is either $(\R^d)^N$, for the $N$-player game, or $\call X$, for the limit game with infinitely many players. In the latter case we give notions of classical solution and QSD solution which are analogous as those in the previous section.

\begin{defn} \label{dir_def:cse}
A sequence of pairs $((\lambda^i, v^i))_{i\in\Z}$ of real numbers $\lambda^i$ and $\R$-valued functions $v^i$ defined on $\call X$ is a classical solution to the ergodic Nash system~\eqref{dir_NSe} on $\call X$ if the following hold:
\begin{enumerate}[label=\textbf{(E\arabic*)}]
\item \label{dir_e1} each $v^i$ is of class $C^2$ with respect to $x \in \call X$, in the Fréchet sense;
\item \label{dir_e2} for each $i \in \N$, the series $\sum_{j\neq i} D_j v^j D_j v^i$ uniformly converges on all bounded subsets of $\call X$;
\item \label{dir_e3} system~\eqref{dir_NSe} is satisfied pointwise for all $x \in \call X$;
\end{enumerate}
A \emph{QSD ergodic (QSDE) solution} will be a classical solution to \eqref{dir_NSe} with
\begin{equation} \label{dir_defQSDE}
\lambda^i \equiv \lambda = \tr \bar c, \qquad v^i(x) = \frac12 \sum_{h,k \in \N} \bar c_{hk} \pair{x^{h+i}}{x^{k+i}}_{\R^d},
\end{equation}
for some $\bar c \in \ell^1(\N^2)$.
\end{defn}

\begin{rmk}
By the structure of \eqref{dir_NSe}, it is clear that if $((\lambda^i, v^i))_{i\in\Z}$ is a classical solution, then so will be $((\lambda^i, v^i+\mu^i))_{i\in\Z}$ for any choice of real numbers $\mu^i$. We will prove that there exists a special choice of $\mu \in \R$, and of $\bar c \in \ell^1(\N^2)$, such that the solution $((\lambda, v^i+\mu))_{i\in\Z}$, with $\lambda$ and $v^i$ given by \eqref{dir_defQSDE}, is in a precise sense the limit of the QSD solution as $T \to +\infty$ (see~\Cref{dir_thm:conv}).
\end{rmk}

Arguing as in the previous section, the coefficients $\bar c_{hk}$ of a QSDE solution are given by the solutions of the following system:
\begin{equation} \label{dir_eqcze}
-   c_{0h}   c_{0k} + \sum_{j = 0}^h   c_{0,h-j}   c_{jk} + \sum_{j = 0}^k   c_{0,k-j}   c_{jh} = f_{hk}.
\end{equation}
It is immediate to see that if $ c$ solves \eqref{dir_eqcze}, then so does $- c$, hence we cannot have a unique solution to this limit system.

\begin{lem} \label{dir_lem:solce}
There are exactly two sequences $(c_{hk}^\pm)_{h,k \in \N}$ which solve \eqref{dir_eqcze}. Such sequences are one the opposite of the other; that is, $c^- = - c^+$.
\end{lem}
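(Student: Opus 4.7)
\emph{Proof plan.} The argument rests on two algebraic observations plus a direct induction. First, every nonlinear term in \eqref{dir_eqcze} is quadratic in $c$, so the system is manifestly invariant under the global negation $c \mapsto -c$; hence once one solution has been exhibited, its negation is automatically another. Second, plugging $(h,k) = (0,0)$ into \eqref{dir_eqcze} yields after obvious cancellations the scalar equation $c_{00}^2 = f_{00}$, and the strong gathering hypothesis forces $f_{00} = \varphi(0,0) = \xi(0)^2 > 0$; so $c_{00} \in \{+\nu,-\nu\}$ with $\nu \defeq \sqrt{f_{00}}$, and any solution is thus pinned to one of these two values at $(0,0)$.

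The core of the proof would be to show that once the sign of $c_{00}$ is fixed, the entire sequence $(c_{hk})$ is uniquely determined. I would proceed by induction on the level $n \defeq h+k$, working on the upper triangle $h \leq k$ and extending by the symmetry $c_{hk} = c_{kh}$ (which is consistent since \eqref{dir_eqcze} is invariant under $(h,k) \mapsto (k,h)$). At the inductive step, I would isolate the contributions from $j = h$ in the first sum of \eqref{dir_eqcze} and from $j = k$ in the second sum; each of these equals $c_{00}\, c_{hk}$, so after rearranging, the equation reads
\[
2\, c_{00}\, c_{hk} = f_{hk} - c_{0h} c_{0k} - \sum_{j=1}^{h-1} c_{0,h-j}\, c_{jk} - \sum_{j=1}^{k-1} c_{0,k-j}\, c_{jh}.
\]
Every coefficient on the right-hand side has level strictly less than $n$, so the inductive hypothesis makes the right-hand side known; since $c_{00} \neq 0$, this uniquely determines $c_{hk}$. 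Running the induction with $c_{00} = +\nu$ produces a sequence $c^+$ satisfying every instance of \eqref{dir_eqcze}, and by the sign symmetry above the choice $c_{00} = -\nu$ must yield $c^- = -c^+$.

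The only subtlety I foresee is the diagonal case $h = k$, where the two sums in \eqref{dir_eqcze} coincide and the extraction of the leading term is doubled: from $-c_{0h}^2 + 2\sum_{j=0}^{h} c_{0,h-j}\, c_{jh} = f_{hh}$ one still obtains a formula of the shape $2\, c_{00}\, c_{hh} = f_{hh} - (\text{lower-level data})$, so the induction proceeds unchanged. Beyond this mild case split I expect no genuine obstacle, since the equations are essentially linear in the top-level unknown and the strong gathering hypothesis provides the crucial non-vanishing of $c_{00}$ that drives the induction.
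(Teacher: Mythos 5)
Your proof is correct and takes essentially the same approach as the paper's: establish $c_{00}^2 = f_{00} > 0$ from the strong-gathering hypothesis, then determine all remaining coefficients by induction on the indices once the sign of $c_{00}$ is fixed, with the sign symmetry $c \mapsto -c$ accounting for the two solutions. The only cosmetic difference is that you organize the induction by total degree $h+k$, whereas the paper (in the proof of the analogous evolutive Lemma~\ref{dir_lem:uniquesolc}) proceeds via the Pareto order on $(h,k)$; both orderings make the right-hand side of the isolated recursion $2c_{00}c_{hk} = f_{hk} - (\text{lower terms})$ depend only on already-determined coefficients, so the two variants are interchangeable.
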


\begin{proof}
We have $c_{00}^2 = f_{00} > 0$, hence $c_{00} \in \{ \pm \sqrt{f_{00}}\, \}$. Once the sign of $c_{00}$ is chosen, all other $c_{hk}$ can be uniquely determined by induction on $h,k$.
\end{proof}

Both solutions can be represented also in this case by a generating function.

\begin{lem} \label{dir_lem:bxi}
The two solutions to \eqref{dir_eqcze} are given by
\begin{equation} \label{dir_cderbxi}
c_{hk}^\pm = \pm \,\frac1{h!\,k!} \, \frac{\de^{h+k}}{\de z^h \de w^k}\bigg|_{(0,0)} \barr\Xi,
\end{equation}
for some analytic function $\barr\Xi \colon \varrho \bb D^2 \to \C$ such that $\barr\Xi(z,w) = \barr\Xi(w,z)$.
\end{lem}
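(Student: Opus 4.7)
The plan is to mimic the generating-function approach of \Cref{dir_lem:xi}, translating the infinite ODE system \eqref{dir_eqcze} into a single algebraic identity for a generating function. If one formally sets $\barr\Xi(z,w) = \sum_{h,k \in \N} c_{hk}\, z^h w^k$, then multiplying \eqref{dir_eqcze} by $z^h w^k$ and summing yields
\[
\bigl(\barr\Xi(z,0) + \barr\Xi(0,w)\bigr)\,\barr\Xi(z,w) - \barr\Xi(z,0)\,\barr\Xi(0,w) = \varphi(z,w).
\]
Setting $w=0$ and using the symmetry $\barr\Xi(z,0)=\barr\Xi(0,z)$ (which reflects $c_{hk}=c_{kh}$) forces $\barr\Xi(z,0)^2 = \varphi(z,0) = \xi(z)^2$, so $\barr\Xi(z,0) = \pm\xi(z)$; the two sign choices will give rise to the two solutions $c^\pm$ of \Cref{dir_lem:solce}.

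Picking the $+$ branch and solving the identity algebraically for $\barr\Xi$, I would define
\[
\barr\Xi(z,w) \defeq \frac{\varphi(z,w) + \xi(z)\xi(w)}{\xi(z) + \xi(w)}, \qquad (z,w) \in \varrho\bb D^2,
\]
and verify three properties. The symmetry $\barr\Xi(z,w)=\barr\Xi(w,z)$ is inherited from that of $\varphi$. The consistency $\barr\Xi(z,0)=\xi(z)$ follows by direct substitution, using $\varphi(z,0)=\xi(z)^2$. Once analyticity on $\varrho\bb D^2$ is established, the displayed functional identity holds by construction, and extracting Taylor coefficients at the origin via $\tfrac{1}{h!\,k!}\partial_z^h\partial_w^k\big|_{(0,0)}$ recovers \eqref{dir_eqcze}. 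Uniqueness in \Cref{dir_lem:solce} then forces these coefficients to coincide with $c^+_{hk}$; the other solution is given by $-\barr\Xi$, corresponding to the opposite branch $\barr\Xi(z,0)=-\xi(z)$.

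The main obstacle is analyticity on $\varrho\bb D^2$. Numerator and denominator are both analytic, inheriting this property from the analyticity of $\xi$ on $\varrho\bb D$ already exploited in \Cref{dir_lem:xi}, so what remains is to rule out vanishing of $\xi(z)+\xi(w)$. Strong gathering, $\varphi(\cdot,0)\notin(-\infty,0)$ on $\varrho\bar{\bb D}$, places $\xi$ via the principal branch of the square root into the closed right half-plane, so $\xi(z)+\xi(w)$ has non-negative real part and can vanish only where simultaneously $\xi(z)=0$ and $\xi(w)=0$; ruling out such coincident zeros (as is implicit in the way strong gathering is used elsewhere, cf.\ the observation $\varphi(0,0)>0$ in the proof of \Cref{dir_lem:uniquesolc}) makes analyticity immediate. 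A more robust alternative, if one wants to remain agnostic about zeros of $\varphi(\cdot,0)$, is to realise $\barr\Xi$ as the pointwise limit of $\hat\Xi(t,\cdot,\cdot)$ as $t\to\infty$ using the explicit expression \eqref{dir_Xit} (exploiting that $\call L(t,\cdot)\to 1$ as $t\to\infty$ in the relevant regime, whence $\sigma^\pm(t,\cdot,\cdot)\to 2/(\xi(z)+\xi(w))$), and to deduce analyticity from uniform convergence on compact subsets of $\varrho\bb D^2$ combined with the analyticity of each $\hat\Xi(t,\cdot,\cdot)$ supplied by \Cref{dir_lem:xi}.
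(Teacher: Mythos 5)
Your proof is correct and follows essentially the same route as the paper: translate \eqref{dir_eqcze} into the algebraic identity for a generating function, deduce $\barr\Xi(\cdot,0)^2 = \varphi(\cdot,0)$, solve for $\barr\Xi$ to obtain formula \eqref{dir_barXi}, and use strong gathering (via the principal branch of the square root, placing $\xi$ in the right half-plane) to keep the denominator $\xi(z)+\xi(w)$ away from zero and hence preserve analyticity on $\varrho\bb D^2$. The paper states the nonvanishing of $\Re(\xi(z)+\xi(w))$ directly; your slightly more careful discussion of what could go wrong (coincident zeros of $\xi$), and your optional fallback via the $T\to\infty$ limit of $\hat\Xi$, are consistent with — but not needed beyond — the paper's one-line argument.
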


\begin{proof}
A function $\barr\Xi$ is the desired generating function if it solves the equation
\begin{equation*} 
- \barr\Xi(z,0)\,\barr\Xi(0,w) + \left( \barr\Xi(z,0) + \barr\Xi(0,w) \right) \barr\Xi(z,w) = \varphi(z,w),
\end{equation*}
for $z,w \in \varrho \bb D$. It follows that $\barr\Xi(z,0)^2 = \varphi(z,0)$, so choose $\barr\Xi(\cdot,0) = \xi$; then
\begin{equation} \label{dir_barXi}
\barr \Xi(z,w) = \frac{\varphi(z,w) + {\xi(z)}{\xi(w)}}{{\xi(z)} + {\xi(w)}}.
\end{equation}
Note that the real part of the denominator in \eqref{dir_barXi} can vanish only if $|z|,|w| = \varrho$, hence $\barr\Xi$ is well-defined and analytic for $(z,w) \in \varrho \bb D^2$.
\end{proof}

This is sufficient to prove the existence of exactly two opposite QSDE solutions, and thus that the limiting ergodic Nash system is well-posed in $\call X$. We state this as a theorem.

\begin{thm}
There exist exactly two QSDE solutions to \eqref{dir_NSe} on $\call X$. Such solutions are one the opposite of the other.
\end{thm}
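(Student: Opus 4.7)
The plan is to mirror the construction of the QSD solution in \Cref{dir_wp}, replacing the time-dependent generating function $\hat\Xi$ by its ergodic analogue $\bar\Xi$ from \Cref{dir_lem:bxi}. Once the two candidate coefficient sequences $c^\pm$ have been produced in \Cref{dir_lem:solce} and represented by $\bar\Xi$ in \Cref{dir_lem:bxi}, verifying that they yield QSDE solutions---and only those two---is largely mechanical.

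As a first step I would reproduce the decay estimate of \Cref{dir_lem:cl1}. Since $\bar\Xi$ is analytic on $\varrho\bb D^2$, applying Cauchy's formula to \eqref{dir_cderbxi} on a polydisc of radius $r \in (1,\varrho)$ gives
\[
|c^\pm_{hk}| \leq \frac{K(r)}{r^{h+k}}, \qquad K(r) \defeq \max_{|z|=|w|=r} |\bar\Xi(z,w)|,
\]
so in particular $c^\pm \in \ell^1(\N^2)$; hence $\lambda^\pm \defeq \tr c^\pm$ is well-defined and the candidate $v^{i,\pm}$ given by \eqref{dir_defQSDE} are pointwise meaningful on $\call X$.

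The verification of \ref{dir_e1} is then a transcription of \Cref{dir_lem:bf}, which only exploits $\ell^1$-summability of the coefficients; twice continuous Fréchet-differentiability on $\call X$ follows. Requirement \ref{dir_e2} is obtained exactly as in the unlabelled lemma just after \Cref{dir_lem:bf}: the estimate above forces $|D_j v^{j,\pm}(x)\,D_j v^{i,\pm}(x)| \lesssim R^2 r^{i-j}$ on $\call B_R$ for $j \geq i$, while $D_j v^{i,\pm} \equiv 0$ for $j < i$, so the series is dominated by a convergent geometric one uniformly on $\call B_R$. Finally, \ref{dir_e3} follows by construction: applying $\frac{1}{h!\,k!}\partial_z^h \partial_w^k|_{(0,0)}$ to the functional equation solved by $\bar\Xi$ in the proof of \Cref{dir_lem:bxi} recovers exactly system~\eqref{dir_eqcze}, and the quadratic-Ansatz derivation that led from \eqref{dir_NS} to \eqref{dir_peqc}---performed now with $\dot c \equiv 0$ and applied to an Ansatz of the form \eqref{dir_defQSDE}---shows that \eqref{dir_eqcze} is equivalent to the pointwise validity of \eqref{dir_NSe}.

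For uniqueness, let $((\lambda^i, v^i))_{i \in \Z}$ be any QSDE solution with coefficient sequence $\bar c \in \ell^1(\N^2)$. Plugging \eqref{dir_defQSDE} into \eqref{dir_NSe} and matching coefficients forces $\bar c$ to satisfy \eqref{dir_eqcze}; by \Cref{dir_lem:solce} there are exactly two such sequences, namely $c^\pm$, and $c^- = -c^+$. Since $c^\pm_{00} = \pm\sqrt{f_{00}} \neq 0$ by strong gathering, the resulting solutions are genuinely distinct. The only subtle point in the whole argument---which I would regard as the main obstacle---is the clerical verification that the Ansatz \eqref{dir_defQSDE} reduces the infinite-dimensional PDE \eqref{dir_NSe} to the algebraic system \eqref{dir_eqcze} when $\bar c \in \ell^1(\N^2)$, which requires checking that all rearrangements of the resulting double series are legitimate; everything else is a routine transposition of the evolutive machinery already developed.
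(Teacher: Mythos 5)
Your proposal is correct and follows essentially the same route as the paper: transfer the Cauchy estimate from \Cref{dir_lem:cl1} to obtain $c^\pm \in \ell^1(\N^2)$, then re-run the argument of \Cref{dir_lem:bf} and the subsequent unlabelled lemma to build the QSDE solutions, with \Cref{dir_lem:solce} supplying the "exactly two" part. The paper's own proof is terser (it cites those lemmata without spelling out the verification of \ref{dir_e1}--\ref{dir_e3} or the uniqueness step that you make explicit), but there is no difference in substance.
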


\begin{proof}
Argue as in the first part of the proof of \Cref{dir_lem:cl1} to say that $c^+ \in \ell^1(\N^2)$, then argue as in the proof of \Cref{dir_lem:bf} to build the QSDE solution determined by choosing $\bar c = c^+$. Finally note that the solution determined by the choice $\bar c = c^-$ is the opposite function.
\end{proof}

\subsection{Long-time asymptotics}

As expected by KAM theory and ergodic control, we are going to prove now that, up to a constant, the QSDE solution corresponding to $c^+$ (that is, the solution with $c_{00} > 0$) describes the long-time asymptotics of the QSD solution as $T \to +\infty$, while the opposite solution should be regarded as the result of considering the limit $T \to -\infty$ instead. To highlight the dependence of the QSD solution on $T$, we will write it as $u^i_T$; on the other hand, since by the shift-invariance property it will suffice to show the convergence of $u^0_T$ to the QSDE solution $v^0$, we will omit the superscript $0$.

\begin{thm} \label{dir_thm:conv}
Let Assumptions \textnormal{(\hyperlink{AS}{$\star$})} be in force with $[0,+\infty) \subseteq I$. Let $u_T$ be the value function of the $0$-th player corresponding to the QSD solution to the Nash system on $[0,T] \times \call X$; let $v$ be the value function of the $0$-th player corresponding to the QSDE solution on $\call X$ determined by $\bar c = c^+$. Let $\lambda \defeq \tr \bar c$. Then, for any $t \geq 0$, as $t < T \to +\infty$, the following limits hold, locally uniformly in both $x \in \call X$ and $t$:
\begin{equation} \label{dir_conv1}
\frac{u_T(t,x)}{T-t} \to \lambda
\end{equation}
and there exists a constant $\mu \in \R$ such that
\begin{equation} \label{dir_conv2}
u_T(t,x) - \lambda(T-t) \to v(x) + \mu.
\end{equation}
\end{thm}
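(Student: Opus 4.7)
The plan is to exploit the closed-form generating function $\hat\Xi$ of \Cref{dir_lem:xi}, together with its ergodic counterpart $\bar\Xi$ of \Cref{dir_lem:bxi}, to read off the asymptotics of the coefficients $c_{hk}(t)$ and of $\tr c$. After the change of variable $\tau = T-t$, writing $\hat c(\tau) \defeq c(T-\tau)$ so that $\hat c_{hk}(\tau) = \tfrac1{h!\,k!}\,\de_z^h\de_w^k \hat\Xi(\tau,0,0)$, the heart of the matter is the quantitative claim
\[
\hat\Xi(\tau,z,w) = \bar\Xi(z,w) + O\bigl(e^{-\gamma\tau}\bigr) \qquad \text{as } \tau\to\infty,
\]
uniformly on compact subsets of $\varrho\bb D^2$, for some $\gamma > 0$. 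I would establish this starting from the explicit representation \eqref{dir_xitint}: strong gathering forces $\Re\,\xi(z) > 0$ on $\varrho\bar{\bb D}$ (since $\varphi(z,0)\notin(-\infty,0]$ there and $\xi$ is the principal square root), so both $\call E(\tau,z;\psi,\xi)$ and $\call E(\tau,z;\xi,\psi)$ behave like $\tfrac12(\xi(z)+\psi(z))e^{\xi(z)\tau}\bigl(1+O(e^{-2\xi(z)\tau})\bigr)$. Then the $\Psi$-term in \eqref{dir_xitint} is exponentially damped by the prefactor, and routine asymptotic expansion of the two integrals yields the limit $(\varphi(z,w) + \xi(z)\xi(w))/(\xi(z)+\xi(w)) = \bar\Xi(z,w)$, with exponential remainder governed by $\gamma = 2\min_{z\in\varrho\bar{\bb D}}\Re\,\xi(z) > 0$. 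The compatibility hypothesis \eqref{hpcomp} with $[0,+\infty)\subseteq I$ is precisely what guarantees that the orbit $\tau \mapsto \hat\Xi(\tau,\cdot,\cdot)$ stays analytic on $\varrho\bb D^2$ for every $\tau\geq 0$, so that the explicit formula remains valid along it.

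Cauchy's inequality on a polydisc of radius $r\in(1,\varrho)$ then upgrades this to quantitative coefficient estimates
\[
|\hat c_{hk}(\tau) - \bar c^+_{hk}| \;\leq\; \frac{K}{r^{h+k}}\, e^{-\gamma\tau}, \qquad h,k\in\N,\ \tau\geq 0,
\]
for some $K=K(r,f,g)$, which combined with \eqref{dir_estchkL} are the only analytic inputs needed. Writing
\[
u_T(t,x) - \lambda(T-t) = \frac12 \sum_{h,k\in\N}c_{hk}(t)\,\pair{x^{h}}{x^{k}}_{\R^d} + \int_0^{T-t}\!\bigl(\tr\hat c(\sigma) - \lambda\bigr)\,d\sigma,
\]
for $x$ in an arbitrary bounded $\call B_R\subset\call X$ the first sum converges to $v(x)$ locally uniformly in $(t,x)$, because $\sum_{h,k}r^{-(h+k)} < \infty$ and the coefficient bound dominates; the integrand in the second satisfies $|\tr\hat c(\sigma)-\lambda|\lesssim e^{-\gamma\sigma}$, hence the integral converges to the absolutely convergent improper integral $\mu\defeq\int_0^\infty(\tr\hat c(\sigma)-\lambda)\,d\sigma\in\R$. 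This yields \eqref{dir_conv2}. The limit \eqref{dir_conv1} follows from the same splitting after division by $T-t$: the quadratic sum is bounded on $\call B_R$ uniformly in $T$, hence vanishes once divided by $T-t\to\infty$; the term $\frac1{T-t}\int_0^{T-t}\tr\hat c(\sigma)\,d\sigma$ is a Ces\`aro mean of a function converging to $\lambda$, hence also tends to $\lambda$.

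The main obstacle is the quantitative convergence $\hat\Xi(\tau,\cdot,\cdot) \to \bar\Xi$ uniformly on compacta of $\varrho\bb D^2$; strong gathering provides the positive rate $\gamma$, while the long-time compatibility hypothesis \eqref{hpcomp} precludes any singularity of $\hat\Xi$ from entering $\varrho\bar{\bb D}^2$ along the flow \eqref{dir_eqxi}. Everything after that is a routine application of Cauchy's estimate and dominated convergence.
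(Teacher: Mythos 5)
Your proposal is correct and follows essentially the same route as the paper: start from the explicit representation \eqref{dir_xitint} of the generating function, use strong gathering to get $\Re\,\xi$ bounded below by a positive constant on compact subsets of $\varrho\bb D$ (so that the $\call E$-factors grow exponentially and $\hat\Xi(\tau,\cdot,\cdot)\to\bar\Xi$ uniformly on $\call Q_r$ with an integrable, in fact exponentially small, remainder — this is exactly what \Cref{lem:convE} encodes via the $\sigma^\pm$ functions of \Cref{LQ_computeintegral}), transfer the uniform estimate to coefficient bounds by Cauchy's inequality as in \eqref{dir_ctobc}, and conclude \eqref{dir_conv2} by summing the coefficient decay and integrating $\tr\hat c-\lambda$, with \eqref{dir_conv1} following from the same splitting. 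The only cosmetic difference is that you extract the asymptotics directly from the $\call E$-terms in \eqref{dir_xitint} and invoke a Cesàro-mean argument for \eqref{dir_conv1}, whereas the paper packages the same estimate through the auxiliary functions $\sigma^\pm$ and deduces \eqref{dir_conv1} from the integrability of the remainder; also, your rate $\gamma$ should really be taken as $2\min_{\bar{\bb D}_r}\Re\,\xi$ for a fixed $r\in(1,\varrho)$ rather than over the closed disc $\varrho\bar{\bb D}$, matching the paper's use of $\bar{\bb D}_{r'}$ in \Cref{lem:convE}.
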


The proof is based on the following result, which is strictly related to a refinement of \Cref{dir_lem:cl1} (cf.~\Cref{rmk:refLT} below) and is due to the possibility of explicitly compute the integral in formula~\eqref{dir_xitint} as showed in \Cref{LQ_computeintegral}.

\begin{lem} \label{lem:convE}
Under Assumptions \textnormal{(\hyperlink{AS}{$\star$})} with $[0,+\infty) \subseteq I$, there exists a nonnegative function $\gamma \in C_0^0([0,+\infty)) \cap L^1([0,+\infty))$, depending only on $r$, $f$ and $g$, such that 
\begin{equation*} 
\sup_{(z,w) \in \call Q_r} \bigg\lvert \sigma^\pm(t,z,w) - \frac{2}{\xi(z) + \xi(w)} \bigg\rvert \leq \gamma(t)\,
\end{equation*}
for all $t \geq 0$, where $\sigma^\pm$ are defined as in~\eqref{dir_spm}.
\end{lem}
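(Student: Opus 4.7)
The plan is to establish that $\call L(t,z) \to 1$ at an exponential rate uniform in $z$ on a disc slightly larger than $r\bar{\bb D}$, and then propagate this decay to $\sigma^\pm$ via the decomposition in~\eqref{dir_spm}. Rewriting $\call L$ through $\tanh$ yields
\[
\call L(t,z) = \frac{\xi(z)\tanh(\xi(z)t) + \psi(z)}{\psi(z)\tanh(\xi(z)t) + \xi(z)},
\qquad
F(t,z) \defeq \call L(t,z) - 1 = \frac{(\xi(z) - \psi(z))(\tanh(\xi(z)t) - 1)}{\psi(z)\tanh(\xi(z)t) + \xi(z)}.
\]
The strong gathering assumption forces $\phi(\cdot,0)$ to avoid $(-\infty,0)$ on $\varrho\bar{\bb D}$, so the principal branch $\xi = \sqrt{\phi(\cdot,0)}$ satisfies $\mathrm{Re}\,\xi > 0$ there; by compactness one gets $\mathrm{Re}\,\xi \geq c_0$ for some $c_0 > 0$ on $r'\bar{\bb D}$, for any fixed $r' \in (r,\varrho)$. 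Since $\tanh\zeta - 1 = -2e^{-2\zeta}/(1 + e^{-2\zeta})$, we have $|\tanh(\xi(z)t) - 1| \lesssim e^{-2c_0 t}$ for $t$ large, while the compatibility hypothesis~\eqref{hpcomp} keeps the denominator uniformly bounded from below in $t \in [0,+\infty)$ and $z \in \varrho\bb D$. Combining these, there exists $\gamma_0 \in C_0([0,+\infty)) \cap L^1([0,+\infty))$, depending only on $r'$, $f$ and $g$, such that $|F(t,z)| \leq \gamma_0(t)$ on $r'\bar{\bb D}$ for all $t \geq 0$.

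To transport this to $\sigma^\pm$, one decomposes
\[
\sigma^\pm(t,z,w) - \frac{2}{\xi(z) + \xi(w)} = \frac{F(t,z) + F(t,w)}{\xi(z) + \xi(w)} \pm \frac{F(t,z) - F(t,w)}{\xi(z) - \xi(w)}.
\]
The first summand is bounded on $\call Q_r$ by $\gamma_0(t)/c_0$ from the uniform estimate on $F$ and the bound $|\xi(z) + \xi(w)| \geq 2c_0$. For the second summand, the analyticity of $F(t,\cdot)$ on $\varrho\bb D$ with sup-norm $\gamma_0(t)$ on $r'\bar{\bb D}$ together with Cauchy's integral formula produces
\[
|F(t,z) - F(t,w)| \leq \frac{r'\,\gamma_0(t)}{(r'-r)^2}\,|z - w| \qquad \forall\, z,w \in r\bar{\bb D}.
\]
On the other hand, $\xi^2(z) - \xi^2(w) = \phi(z,0) - \phi(w,0) = (z-w)\,P(z,w)$ for a polynomial $P$, so $\xi(z) - \xi(w) = (z-w)\,P(z,w)/(\xi(z) + \xi(w))$. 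Provided $|P|$ is bounded from below on $\call Q_r$, the second summand is controlled by $C\,\gamma_0(t)$, and setting $\gamma(t) \defeq C'\gamma_0(t)$ for a suitable constant closes the estimate.

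The main obstacle is precisely this last point: one needs the quantitative non-degeneracy $|\xi(z) - \xi(w)| \gtrsim |z - w|$ on $\call Q_r$ (equivalently, a uniform lower bound on $|P|$), which reflects a regularity of $\xi$ at its critical points and must be extracted from the strong gathering assumption with $\varrho > 1$. Once this is in place, the pointwise exponential decay of $F$ — which is the core content of the first step and the only ingredient that depends on the long-time compatibility $[0,+\infty) \subseteq I$ — transfers cleanly into the claimed decay of $\sigma^\pm - 2/(\xi(z) + \xi(w))$, yielding a majorant $\gamma \in C_0 \cap L^1$ as desired.
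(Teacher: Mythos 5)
Your strategy is the right one and coincides with the paper's at the structural level: isolate $F(t,\cdot)=\call L(t,\cdot)-1$, show that it decays exponentially in $t$ uniformly on a disc $\barr{\bb D}_{r'}$ slightly larger than $\barr{\bb D}_r$ (via $\Re\,\xi\geq\epsilon$ on $\barr{\bb D}_{r'}$ and the lower bound on $|\psi\tanh(\xi t)+\xi|$ coming from~\eqref{hpcomp}), and then feed this into the decomposition $\sigma^\pm-2/(\xi(z)+\xi(w))=\tfrac{F(t,z)+F(t,w)}{\xi(z)+\xi(w)}\pm\tfrac{F(t,z)-F(t,w)}{\xi(z)-\xi(w)}$. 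The symmetric summand is handled the same way. Where you diverge is the antisymmetric summand: you go through $|z-w|$ via a Cauchy estimate on $F(t,\cdot)$ and then must divide by $|\xi(z)-\xi(w)|$, which forces you to ask for $|\xi(z)-\xi(w)|\gtrsim|z-w|$, i.e.\ a uniform lower bound on the polynomial $P(z,w)=(\phi(z,0)-\phi(w,0))/(z-w)$ over $\call Q_r$. You correctly flag this as the obstacle, and indeed it is not deducible from the hypotheses: nothing in the strong gathering or compatibility conditions makes $\xi$ quantitatively injective (nor even injective) on $\barr{\bb D}_r$.

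The paper avoids the obstacle by never introducing the factor $|z-w|$. Rather than estimating $F(t,z)-F(t,w)$ via Cauchy in $z$, it differentiates $\call L$ with respect to $\xi$ directly (the displayed formula just above \eqref{dir_s-bound}), observes that $|\de_\xi\call L(t;\xi(z),\psi(z))|\lesssim t/\frk f(t)^2$ uniformly on $\call Q_r$ — again using the lower bound on $|\cosh(\xi(\cdot)t)|$ and~\eqref{hpcomp} — and then invokes the mean-value inequality in the $\xi$-variable, so that $|\call L(t,z)-\call L(t,w)|\lesssim(t/\frk f(t)^2)\,|\xi(z)-\xi(w)|$ and the $\xi(z)-\xi(w)$ cancels before $|z-w|$ ever appears. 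That switch — from Cauchy in $z$ to a mean-value estimate in $\xi$ — is precisely the idea missing from your argument, and it closes the gap in the regime the paper cares about (notably $\psi\equiv0$, i.e.\ $g=0$). It is worth noting, however, that since $\call L(t,z)$ depends on $z$ through both $\xi(z)$ and $\psi(z)$, the paper's mean-value argument silently treats $\psi$ as constant along the path, and when $\psi(z)\neq\psi(w)$ a residual term proportional to $\de_\psi\call L\cdot\frac{\psi(z)-\psi(w)}{\xi(z)-\xi(w)}$ reappears — the same degeneracy you flagged, now in the denominator $\xi(z)-\xi(w)$ of the $\psi$-variation. The paper does not address this explicitly, so your instinct that the point is delicate is sound; but the intended and cleaner mechanism, which you should adopt, is the mean-value estimate in $\xi$ rather than the Cauchy estimate in $z$.
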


\begin{proof}
By the continuity of $\xi$, given $r' \in (r,\varrho)$,\footnote{E.g., $r' = \frac12(r+\varrho)$.}  there exists $\epsilon > 0$ such that
\begin{equation} \label{dir_rexi}
\Re \,\xi \geq \epsilon \quad \text{on}\ \barr{\bb D}_{r'}
\end{equation}
whence
\begin{equation} \label{dir_infde}
\inf_{k\in\Z} \,\lvert{\xi(w)t - \big(k+\tfrac12\big)i\pi}\rvert \geq \frk d(t) \qquad \forall\, w \in \barr{\bb D}_{r'}, \ t \in [0,+\infty),
\end{equation}
for some function $\frk d$ which is uniformly positive on $[0,+\infty)$.\footnote{For instance, $\frk d(t) = \big( t \vee \tfrac13\norm{\xi}_{\infty;\barr{\bb D}_{r'}}^{-2} \big)\epsilon$.}
By \eqref{dir_infde} there exists a uniformly positive function ${\frk f} \colon [0,+\infty) \to \R_+$, which depends only on $r$ and $f$, such that $\lvert \cosh(\cdot\,t) \rvert \geq {\frk f}(t)$ on $\xi(\barr{\bb D}_r)$; also, by \eqref{dir_rexi} we can suppose that ${\frk f}$ be asymptotic to $\frac12 e^{3\epsilon|\cdot|}$ at $\infty$. Since, with $\call L$ defined as in \eqref{dir_spm},
\[
\frac{\de}{\de\xi} \, \call L(t,z;\xi(z),\psi(z)) = \frac1{\cosh(\xi(z)t)} \, \frac{(\xi(z)^2-\psi(z)^2)t - \frac{\psi(z)}{\cosh(\xi(z)t)}}{(\psi(z)\tanh(\xi(z) t)+\xi(z))^2},
\]
we obtain, also using \eqref{hpcomp},
\begin{equation} \label{dir_s-bound}
\bigg| \frac{\call L(t,z) - \call L(t,w)}{\xi(z) - \xi(w)} \bigg| \lesssim \frac{t}{\frk f(t)^2} \quad \forall \, (t,z,w) \in [0,+\infty) \times \call Q_r,
\end{equation}
where the implied constant depends only on $r$, $f$ and $g$. At this point, it is easy to see that the desired conclusion follows.
\end{proof}

\begin{rmk} \label{rmk:refLT}
This proof shows that if $[0,+\infty) \subseteq I$, then the constant $K$ appearing in \Cref{dir_lem:cl1} is in fact independent of $T$, since $\sup_{\R \times \call Q_r} \lvert \hat\Xi \rvert$ is finite. In particular, $c \in C^0([0,T]; \ell^1(\N^2))$ along with its derivatives, and their the norms are bounded uniformly with respect to $T>0$.
\end{rmk}

\begin{proof}[Proof of \Cref{dir_thm:conv}]
Fix $r \in (1,\varrho)$. By comparing expressions~\eqref{dir_Xit} and \eqref{dir_barXi} one sees that
\begin{equation} \label{dir_xitobxi}
\sup_{|z|,|w| \leq r} \lvert \Xi(t,z,w) - \barr\Xi (z,w) \rvert \lesssim \gamma(T-t),
\end{equation}
where $\gamma$ is the function given in \Cref{lem:convE}, and the implied constant depends only on the $L^\infty$-norms of $\varphi$ and $\xi$ on $\bb D_r$ and $\call Q_r$, respectively.
By \Cref{dir_lem:xi} and Cauchy's theorem on derivatives,
\begin{equation} \label{dir_ctobc}
\lvert c_{hk}^T(t) - \bar c_{hk} \rvert \leq \frac1{r^{h+k}} \sup_{|z|,|w| \leq r} \lvert \Xi(t,z,w) - \barr\Xi (z,w) \rvert \qquad \forall\, h,k \in \N;
\end{equation}
where we have used the superscript $T$ to stress the fact that $c_{hk}(t) = c_{hk}^T(t)$ depends on the horizon $T$. Plugging \eqref{dir_xitobxi} into \eqref{dir_ctobc} yields
\begin{equation} \label{dir_convsums}
\sum_{h,k\in\N} \lvert c_{hk}^T(t) - \bar c_{hk} \rvert \lesssim \gamma(T-t)
\end{equation}
as well as
\begin{equation} \label{dir_convtrs}
\lvert \,\tr c^T(t) - \lambda\, \rvert \lesssim \gamma(T-t),
\end{equation}
where the implied constants depend only on $r$ and $f$. 
As $\gamma$ is integrable on $[0,+\infty)$, by \eqref{dir_convtrs} so is $\tr \hat c - \lambda$, where we use the notation $\hat c = c(T-\cdot)$ introduced in the proof of \Cref{dir_lem:uniquesolc}; thus by the dominated convergence theorem there exists $\mu \in \R$ such that
\[
\int_t^T \tr c^T - (T-t)\lambda \to \mu \quad \text{as} \ T\to+\infty,
\]
locally uniformly in $t$. Along with \eqref{dir_convsums}, this proves \eqref{dir_conv1} and \eqref{dir_conv2}.
\end{proof}

\begin{rmk}
The argument of the previous proof also applies to the case when $t = sT$, with $s \in [0,1]$. In this case, we can give the following estimate of the rate of convergence of \eqref{dir_conv1}: for any $r \in (1,\varrho)$,
\[
\sup_{\norm{x}_{\call X} \leq L, \ s \in [0,1]} \bigg\lvert \frac{u_T(sT,x)}{T} - (1-s)\lambda \bigg\rvert \lesssim_{L,r} \frac1T.
\]
The implied constant can be computed quite explicitly, by retracing the proofs above; we confine ourselves to pointing out that it depends only on $L$, $r$ and $f$, and that it explodes as $L \to \infty$ or $r \to 1$.
\end{rmk}

\subsection{Digression on a delicate limit case} \label{dir_sec:lc}

We have noted in \Cref{dir_ex1} that the case of a cost designed according to an underlying directed circulant graph structure is limit among those satisfying our assumptions, in the sense that \Cref{def:sg} holds with $\varrho = 1$.

Results like Lemmata \ref{dir_lem:uniquesolc}, \ref{dir_lem:xi}, \ref{dir_lem:cl1}, \ref{dir_lem:solce} and \ref{dir_lem:bxi} continue to hold, but the other methods used in the previous sections are not refined enough to successfully prove all the previous theorems for those limit cases, even though, along with \Cref{rmk:refLT}, they are sufficient in order to establish $\ell^\infty$-stability at the level of the system for $c$; that is, convergence in $\ell^\infty(\N^2)$ of the solution to \eqref{dir_eqcz} to a solution of \eqref{dir_eqcze}.

On the other hand, having a closer look at the simplest limit case, which is the directed chain given by the choice $g = 0$, $f^0_{00} = f^0_{11} = 1 = -f^0_{01} = -f^0_{10}$ and $f^0_{hk} = 0$ for all other $h,k$, we note that we are also able to compute QSDE solutions quite easily, thanks to formula \eqref{dir_barXi}. In this case we have $\varphi(z,0) = 1 - z$, and we find the expansion
\[
\barr \Xi(z,w) = 1 + \sum_{j \geq 1} (-)^j \binom{\frac12}{j} \big( z^j + w^j \big) + \sum_{j \geq 2} (-)^j \binom{\frac32}{j} \sum_{\substack{h,k \geq 1 \\ h+k = j}} z^hw^k,
\]
yielding
\begin{equation} \label{dir_cexp}
c^\pm_{hk} = \pm(-)^{h+k}
\binom{\frac32-\delta_{0,hk}}{h+k}.
\end{equation}
As by Stirling's formula $\lvert c^\pm_{hk} \rvert \asymp (h+k)^{\delta_{0,hk}-\frac52}$, one easily sees that these coefficients well-define a QSDE solution, hence the limit ergodic Nash system is well-posed.

One can also note that the coefficients enjoy the property that
\begin{equation} \label{dir_c-c}
\bar c_{hk} = \bar c_{0,h+k} - \bar c_{0,h+k-1} \quad \text{if} \ hk \neq 0,
\end{equation}
where $\bar c$ is either $c^+$ or $c^-$; this can be seen from \eqref{dir_cexp} or proved by induction using system~\eqref{dir_eqcze}. In fact, the same can be done for system~\eqref{dir_eqcz}, so that property~\eqref{dir_c-c} also holds for the\footnote{Note that it is indeed unique, as \Cref{dir_lem:uniquesolc} is still true with the same proof.} solution of \eqref{dir_eqcz} on $[0,T]$, for any fixed $T>0$. Therefore, the information about the coefficients of a prospective QSD or QSDE solution is encoded in the functions of one complex variable $\Xi_0(t,\cdot) \defeq \Xi(t,\cdot,0)$ and $\barr\Xi_0 = \barr\Xi(\cdot,0)$, namely
\[
\Xi_0(t,z) = \sqrt{1-z}\, \tanh\left(\sqrt{1-z}\,(T-t)\right) \quad \text{and} \quad \barr\Xi_0(z) = \sqrt{1-z}\,.
\]

This peculiar fact is specific of the directed chain. It could be useful as it allows to conclude well-posedness of the infinite-dimensional evolutive Nash system provided that the sequence of functions $(c_{0k})_{k\in\N}$ is monotone,\footnote{As it would seem by computing the first functions of the sequence\dots} yet this would not still be enough to deal with convergence to an ergodic solution.

Another property is instead shared by all problems having $f$ of the form \eqref{dir_qtens}: the polynomial $\varphi$ factors as $\varphi(z,w) = \xi^2(z)\xi^2(w)$; this makes $\Xi$ and $\barr \Xi$ functions of $(\xi(z),\xi(w))$, possibly helping in the analysis of the aforementioned limit cases.

\section{Mean-field-like games}

In this part we consider the Nash system \eqref{dir_peqc} without the shift-invariant assumption. Instead, we make a \emph{mean-field-like} assumption. 
Given $(a^i_{jk})_{i,j,k} \in \Sym(N)^N$ we will use the notation $B(a)$ for the matrix $B(a)_{hk} = a^{h}_{hk}$. Given the $i$-th vector $e^i$ of the canonical basis of $\R^N$, we will write $E^i = e^i (e^i)^\trn$. The indices will range over $[[N]]$.

Note that system \eqref{dir_peqc} can be rewritten in a forward form as
\begin{equation} \label{MF_sys}
 \begin{cases}
\dot c^i - (c^i)^\trn E^ic^i + B(c)^\trn c^i + c^i B(c) = f^i \\
c^i(0) = g^i 
\end{cases}
\quad \ i \in [[N]],
\end{equation}
where $c^i(t), f^i(t),g^i \in \Sym(N)$ for all $i \in [[N]]$ and $t \in [0,T]$.

\begin{thm} \label{MF_thm}
Let $T>0$ and let $f \in L^1((0,T);\Sym(N)^N)$ and $g \in \Sym(N)^N$ satisfy
\begin{equation} \label{MF_hypfg}
\sup_i \Big( N \sum_{\substack{h,k\\ k \neq i}} | \bullet^i_{hk} |^2 + N \sum_{\substack{k \\ k \neq i}} | \bullet^k_{ki} |^2 +  |\bullet^i_{ii} |^2 \Big) \leq {\kappa_\bullet}, \quad \kappa_\bullet > 0, \quad \text{for}\ \bullet \in \{ f,g\}.
\end{equation}
Suppose that $B(f) \geq -K_f I$ and $B(g) \geq -K_g I$ for some constants $K_f$ and $K_g$ such that
\begin{equation} \label{MF_condKf}
K_g < \sup_{M\in \R} Me^{-2MT}, \qquad K_f < \sup_{M \in \R} \frac{2M(Me^{-MT}-K_g)}{1-e^{-2MT}}.
\end{equation}
Then there exists $N_0 \in \N$ such that if $N > N_0$ there exists a unique absolutely continuous solution $c$ to \eqref{MF_sys} on $[0,T)$, which satisfies
\begin{equation} \label{MF_estCN}
\sup_i \Big( N \sum_{\substack{h,k\\ k \neq i}} | c^i_{hk} |^2 + N \sum_{\substack{k \\ k \neq i}} | c^k_{ki} |^2 +  |c^i_{ii} |^2 \Big) \leq C
\end{equation}
for some constant $C$ which is independent of $N$.
\end{thm}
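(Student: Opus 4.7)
The plan is to establish two interlocking a priori estimates on any sufficiently regular solution $c$ to \eqref{MF_sys}: a lower bound on $B(c)(t)$ of Riccati type, extending the initial bound $B(g)\geq -K_g I$; and the uniform control \eqref{MF_estCN} on the ``non-diagonal'' entries $c^i_{hk}$ (those with $k\neq i$). Once these are secured, global existence and uniqueness on $[0,T)$ follow from standard ODE theory applied to \eqref{MF_sys}, whose right-hand side is quadratic and hence locally Lipschitz in $\Sym(N)^N$, since the a priori bounds rule out finite-time blow-up. The two estimates are coupled, and the smallness conditions \eqref{MF_condKf} are designed exactly to close the resulting bootstrap.

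First I would derive the ODE for $D(t)\defeq B(c(t))$, so that $D_{ij}=c^i_{ij}$. A direct computation from \eqref{MF_sys} gives, for $i\neq j$,
\[
\dot D_{ij} = -(D_{ii}+D_{jj})\,D_{ij} - D_{ji}\,c^i_{jj} - \sum_{k\neq i,j}\bigl(D_{ki}\,c^i_{kj}+c^i_{ik}\,D_{kj}\bigr) + B(f)_{ij},
\]
and, on the diagonal, $\dot D_{ii} = -D_{ii}^2 - 2\sum_{k\neq i}D_{ki}\,c^i_{ki} + B(f)_{ii}$. The ``leading'' part depends only on $D$ itself, while the residual terms involve entries $c^i_{hk}$ with $k\neq i$, which by \eqref{MF_hypfg} are of individual size $O(N^{-1/2})$ at $t=0$. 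Testing against $\xi\in\R^N$ with $|\xi|=1$, the principal part produces a Riccati-type inequality, schematically of the form
\[
\dot\mu \leq 2M\mu + K_f + \call E_N(t), \qquad \mu(0)\leq K_g,
\]
for $\mu(t)\defeq -\inf_{|\xi|=1}\langle D_s(t)\xi,\xi\rangle$ ($D_s$ being the symmetric part of $D$), where $M$ is a target upper bound and $\call E_N$ is an error controlled by $\sup_i\Phi^i(t)$, with $\Phi^i$ denoting the left-hand side of \eqref{MF_estCN}. The integrated version of this inequality is compatible with the ansatz $\mu(t)\leq M$ on $[0,T]$ precisely when \eqref{MF_condKf} holds and $\call E_N$ is absorbed.

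In parallel, I would differentiate $\Phi^i(t)$ along \eqref{MF_sys}: careful regrouping of the quadratic-in-$c$ terms, combined with Cauchy--Schwarz and the mean-field-like assumption \eqref{MF_hypfg}, yields a Gronwall-type inequality $\dot\Phi^i \lesssim (1+\mu(t))\,\Phi^i + \kappa_f$. Plugging in $\mu(t)\leq M$ from the Riccati step then gives $\sup_{t\in[0,T)}\Phi^i(t)\leq C$ uniformly in $N\geq N_0$. The two estimates are interlocked through $\call E_N$ vs.\ $\mu$, so the bootstrap is closed by a standard continuity argument: set
\[
T^*\defeq \sup\bigl\{\tau\in[0,T):\ \mu(t)\leq M \ \text{and}\ \Phi^i(t)\leq C\ \forall\, i,\ \forall\, t\leq \tau\bigr\},
\]
and use the strict form of \eqref{MF_condKf} together with the uniform-in-$N$ constants to show that both bounds improve slightly at $T^*$, forcing $T^*=T$.

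The main obstacle is the Riccati step: extracting the correct inequality for $\mu$ from the evolution of $D_s$ requires unravelling cross-terms like $D_{ji}\,c^i_{jj}$ and $\sum_{k\neq i,j}(D_{ki}\,c^i_{kj}+c^i_{ik}\,D_{kj})$, which couple $B(c)$-entries with non-$B(c)$ entries of the $c^i$'s, and the specific algebraic form of \eqref{MF_condKf} is what encodes the delicate balance between the Riccati blow-up time, the initial monotonicity defect $K_g$, and the running defect $K_f$.
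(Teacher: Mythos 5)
Your overall strategy matches the paper's: a continuation argument that interlocks a lower bound on $B(c)$ with Gronwall estimates on the mean-field-like quantity $\Phi^i$, closed by the strict inequalities in \eqref{MF_condKf} and the choice of $N$ large. The ODEs you write down for $D=B(c)$ are correct. The paper proceeds via two lemmas (\Cref{MF_l1} and \Cref{MF_l2}): the first derives the $\Phi^i$-type bounds assuming $B(c)\geq -MI$, actually by a three-stage cascade (first $\sum_{h,k,\,k\neq i}|c^i_{hk}|^2$, then $\sup_k\sum_{i\neq k}|c^i_{ik}|^2$, then $|c^i_{ii}|^2$ — each feeding into the next), and the second propagates the lower bound on $B(c)$. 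Your plan compresses the cascade into a single Gronwall inequality on $\Phi^i$, which is a harmless repackaging, but you should be aware that the derivation does require the sequential structure because the estimate on $c^i_{ik}$ uses the already-established bound on $\sum_{h,k,\,k\neq i}|c^i_{hk}|^2$.

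The genuine difference is the ``Riccati step.'' The paper does \emph{not} try to derive a differential inequality for $\mu(t)=-\lambda_{\min}(B(c)_s(t))$. Instead it writes $\dot B(c)+B(c)^2=B(f)-D$ and tests against the solution $\xi$ of the adjoint linear ODE $\dot\xi=B(c)^\trn\xi$ with terminal data $\xi(T)=\zeta$. This makes the quadratic term cancel exactly and yields $(\xi^\trn B(c)\xi)\dot{}\,=\xi^\trn(B(f)-D+B(c)B(c)^\trn)\xi$, which is bounded below by $-(K_f+O(N^{-1/2}))|\xi|^2$ since $B(c)B(c)^\trn\geq 0$; integrating and controlling $|\xi|$ then gives \eqref{MF_estBc} and the form of condition \eqref{MF_condKf}. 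This adjoint trick sidesteps the asymmetry of $B(c)$ entirely. Your route via $\mu$ is workable — if you write $D=D_s+D_a$ with $D_a$ anti-symmetric, you find $(D^2)_s=D_s^2+D_a^2$ and $D_a^2\leq 0$, so the anti-symmetric part actually helps and a scalar Riccati comparison for $\lambda_{\min}(D_s)$ can close — but the inequality $\dot\mu\leq 2M\mu+K_f+\call E_N$ you write is already the \emph{linearized} version; to justify dropping the $\mu^2$ term in favor of $2M\mu$ you need the bound $\mu\leq M$ you are trying to prove, so the continuity argument has to be set up with care (bootstrap the strict inequality $\mu<M$, not $\mu\leq M$), and you also need a Danskin-type lemma to differentiate $\lambda_{\min}(D_s)$. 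None of this is fatal, but it is where the real work lives and your sketch leaves it unresolved; the paper's adjoint-ODE device makes that step essentially a one-liner.
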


Note that \eqref{MF_condKf} is automatically satisfied when $K_f, K_g \leq 0$ for any $T > 0$. Otherwise, for fixed $K_f$ and $K_g$, it poses a restriction on the size of $T$ (or, similarly, a restriction on the size of $(K_f)^+$ and $(K_g)^+$ once $T$ is fixed).  Back to the value functions $u^i$, the previous estimate reads as follows:
\[
\sup_i \Big( N \sum_{\substack{h,k \\ k \neq i}} \| D^2_{hk}u^i \|_\infty^2 + N \sum_{\substack{k \\ k \neq i}} \| D^2_{ki}u^k \|_\infty^2 +  \| D^2_{ii}u^i \|_\infty^2 \Big) \leq C;
\]
in particular,
\[
\sup_i \sum_{j:\, j \neq i} \|D_j \alpha^i\|_\infty^2 = \sup_i \sum_{j:\, j \neq i} \|D^2_{ij} u^i\|_\infty^2 \leq \frac C N.
\]

\begin{rmk}\label{mfl} We use the terminology mean-field-like since any $f^i$ such that
\[
\sup_i |f^i_{ii}| + N \sup_{\substack{i,j \\ j\neq i}} |f^i_{ij}| + N^2 \!\!\! \sup_{\substack{i,j,k \\ j\neq i, k \neq i}} |f^i_{jk}| \leq C
\]
satisfies \eqref{MF_hypfg}, with $\kappa_f$ depending on $C$ (and not on $N$). In turn, the previous inequality is satisfied when $f^i(x) = V^i(x^i, (N-1)^{-1} \sum_{j \neq i} \delta_{x^j})$, where $V^i$ is a smooth enough function defined over $\R^d \times \Pc(\R^d)$ (see for instance \cite[Proposition 6.1.1]{CDLL}).
\end{rmk}

The proof of the theorem is based on the following lemmata.

\begin{lem} \label{MF_l1}
Let $c$ be an absolutely continuous solution to \eqref{MF_sys} with $B(c) \geq -MI$ on $[0,T)$ for some $M \in \R$. Assume \eqref{MF_hypfg}. Then the following estimates hold on $[0,T)$:
\begin{gather}
\label{MF_G1}
\sum_{\substack{h,k \\ k\neq i}} |c^i_{hk}|^2 \leq \kappa_0 N^{-1}, \qquad \kappa_0(t) \defeq (\kappa_g+ \kappa_f t)e^{(1+4M_+)t}, \\
 \label{MF_estcallS}
\sup_k \sum_{i\neq k} |c^i_{ik}|^2 \leq \kappa_1N^{-1}, \qquad k_1(t) \defeq 2\kappa_0(t) e^{2 \int_0^t\! \sqrt{\kappa_0}\,}, \\
\label{MF_estcallC0}
\sup_i |c^i_{ii}|^2 \leq \kappa_2, \qquad \kappa_2(t) = (\kappa_g + (\kappa_f + \kappa_0 \kappa_1 N^{-2}) t)e^{(2+M_+)t},
\end{gather}
where $M_+ = M \vee 0$.
As a consequence, $c$ continuously extends on $[0,T]$.
\end{lem}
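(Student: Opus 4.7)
The plan is to derive, in succession, differential inequalities for each of the three quantities in \eqref{MF_G1}, \eqref{MF_estcallS}, \eqref{MF_estcallC0}, integrate them via Gr\"onwall, and then conclude the continuous extension from the uniform boundedness of $\dot c$.

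For \eqref{MF_G1}, I would set $\Phi^i(t) := \|c^i Q\|_F^2 = \sum_{h,k:k\neq i}|c^i_{hk}|^2$, with $Q := I - E^i$, and differentiate using \eqref{MF_sys}:
\[
\dot\Phi^i = 2\tr(Qc^if^i) + 2\tr(Qc^i\cdot c^iE^ic^i) - 2\tr(Qc^iB(c)^\trn c^i) - 2\tr(Qc^i\cdot c^iB(c)).
\]
The forcing would be bounded by $\Phi^i + \kappa_f/N$ via Cauchy--Schwarz together with $\|Qf^iQ\|_F^2 \leq \kappa_f/N$ from \eqref{MF_hypfg}. For the remaining three terms, the key observation is the algebraic identity $B(c)_{ij} = c^i_{ij}$: the $i$-th row of $B(c)$ coincides with the $i$-th row of $c^i$. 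Writing $(c^iE^ic^i)_{hk} = c^i_{hi}c^i_{ik}$ and expanding each of the three quadratic-in-$c^i$ traces by splitting $v := c^ie_i = v_ie_i + w$ (with $w = Qv$), the cubic contributions $(c^i_{ii})^3$ appearing in $(c^iBc^i)_{ii}$, $((c^i)^2 B)_{ii}$ and in the $j = i$ diagonal piece of $\tr((c^i)^2 B)$ cancel exactly. The surviving residual is bounded using $B(c) \geq -MI$ plus \eqref{MF_hypfg}, giving the clean differential inequality $\dot\Phi^i \leq (1+4M_+)\Phi^i + \kappa_f/N$; Gr\"onwall with $\Phi^i(0) \leq \kappa_g/N$ yields \eqref{MF_G1}.

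For \eqref{MF_estcallS}, I would set $\call S_k(t) := \sum_{i\neq k}|c^i_{ik}|^2$ and use the $(i,k)$-component of \eqref{MF_sys}:
\[
\dot c^i_{ik} = f^i_{ik} + c^i_{ii}c^i_{ik} - (B(c)^\trn c^i)_{ik} - (c^iB(c))_{ik}.
\]
Multiplying by $c^i_{ik}$ and summing over $i \neq k$, the forcing is handled by \eqref{MF_hypfg}, the coefficient $c^i_{ii}$ is bounded below by $-M$ (absorbing into an exponential rate), and the two cross-coupling terms are controlled by Cauchy--Schwarz combined with \eqref{MF_G1}; the outcome is a linear Gr\"onwall inequality of the form $\dot{\call S}_k \leq 2\sqrt{\kappa_0(t)}\,\call S_k + \kappa_f/N$, integrating to \eqref{MF_estcallS}. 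For \eqref{MF_estcallC0}, the $(i,i)$-entry of \eqref{MF_sys} simplifies (using the symmetry of $c^i$) to the scalar Riccati-like equation
\[
\dot c^i_{ii} = f^i_{ii} - (c^i_{ii})^2 - 2\textstyle\sum_{\ell\neq i}c^i_{i\ell}c^\ell_{\ell i},
\]
where the cross-sum is $\leq \sqrt{\Phi^i\,\call S_i} \leq \sqrt{\kappa_0\kappa_1}/N$ by the two previous steps. Multiplying by $2c^i_{ii}$, controlling $-2(c^i_{ii})^3 \leq 2M_+(c^i_{ii})^2$ via $c^i_{ii} \geq -M$, and Cauchy--Schwarz/AM--GM yield $\tfrac{d}{dt}(c^i_{ii})^2 \leq (2+M_+)(c^i_{ii})^2 + \kappa_f + \kappa_0\kappa_1/N^2$, whose integration is \eqref{MF_estcallC0}. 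With the three estimates in hand on $[0,T)$, the right-hand side of \eqref{MF_sys} is uniformly bounded, so $\dot c$ is uniformly bounded and $c$ extends continuously to $[0,T]$ by Cauchy's criterion.

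The main obstacle will be Step 1. A priori the cubic self-interaction $c^iE^ic^i$ and the coupling to $B(c)$ (whose diagonal entries $B(c)_{ii}=c^i_{ii}$ are only bounded below) inject into $\dot\Phi^i$ contributions that are cubic and quadratic in $c^i_{ii}$, a quantity which cannot be controlled a priori without invoking Step 3 and hence incurring a circular argument. Everything rests on the identity $B(c)_{i,\cdot} = (c^i)_{i,\cdot}$, which makes the $(c^i_{ii})^3$ terms cancel after projection by $Q$. The residual linear-in-$c^i_{ii}$ piece $2c^i_{ii}\|w\|^2$ must then be absorbed through Young's inequality combined with the Riccati-type damping implicit in $-4\tr((c^i)^2B(c))$, which is the technical heart of the argument.
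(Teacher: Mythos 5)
Your proposal follows the paper's proof almost step by step: the same three Lyapunov quantities differentiated in the same order, the same Gr\"onwall cascade, the same reliance on the identity $B(c)_{i\cdot}=c^i_{i\cdot}$ to cancel the $j=i$ term of $c^iB(c)$ against $c^iE^ic^i$, and the same continuous-extension argument via uniform boundedness of $\dot c$. The target inequality you state for Step 1, $\dot\Phi^i\leq(1+4M_+)\Phi^i+\kappa_f/N$, is also the paper's.

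However, the "main obstacle" paragraph mis-describes the mechanism, and if you tried to carry it out literally it would not work. You anticipate $(c^i_{ii})^3$ and $(c^i_{ii})^2$ contributions that must "cancel exactly", a residual $2c^i_{ii}\|w\|^2$ to be absorbed via Young's inequality plus Riccati damping, and a potential circularity with Step 3. None of this occurs. Since $\Phi^i$ is built from the entries $c^i_{hk}$ with $k\neq i$, the $(i,i)$-entries of the quadratic forms never enter $\dot\Phi^i$: the would-be $(c^i_{ii})^3$ terms are simply absent, not cancelled, and the dependence of $\dot\Phi^i$ on $c^i_{ii}$ is at most linear. After the $E^i$ cancellation, the remaining transport terms collapse into $-\tr\bigl(\hat c^{i\trn}B(c)\hat c^i\bigr)-\tr\bigl(\tilde c^{i\trn}B(c)\tilde c^i\bigr)$ with $\hat c^i=Qc^i$, $\tilde c^i=c^iQ$, each bounded above by $M_+\|{\cdot}\|_F^2=M_+\Phi^i$ directly from $B(c)\geq-MI$; no Young, no Riccati damping, no circularity. (The linear-in-$c^i_{ii}$ piece is $-2c^i_{ii}\sum_{h\neq i}|c^i_{hi}|^2$, controlled at once by $c^i_{ii}=B(c)_{ii}\geq-M$.) A smaller issue: your sketched Step 2 inequality $\dot{\call S}_k\leq2\sqrt{\kappa_0}\,\call S_k+\kappa_f/N$ drops the $(1+4M_+)\call S_k$ term, which the paper retains and which is needed to produce the stated $\kappa_1$.
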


\begin{proof}
Multiplying the equation for $c^i_{hk}$ by $c^i_{hk}$ and summing over $h$ and $k\neq i$ we have
\[ \begin{split}
\frac12 \frac{\di}{\di t} \sum_{\substack{h,k \\ k\neq i}} |c^i_{hk}|^2 &=  \sum_{\substack{h,k \\ k\neq i}} f^i_{hk} c^i_{hk} - \sum_{\substack{j,h,k \\ j,k\neq i}} c^i_{hk} c^i_{jh}  c^j_{jk} - \sum_{\substack{j,h,k \\ k\neq i}} c^i_{hk} c^i_{jk}  c^j_{jh} \\
&= \sum_{\substack{h,k \\ k\neq i}} f^i_{hk} c^i_{hk} - \mathrm{tr}(\hat c^i{}^\trn B( c) \hat c^i) - \mathrm{tr}(\tilde c^i{}^\trn B( c) \tilde c^i),
\end{split}
\]
where we have set $\hat c^i_{hk} \defeq c^i_{hk}(1-\delta_{hi})$ and $\tilde c^i_{hk} = c^i_{hk}(1-\delta_{ki})$. It follows that
\[
\frac{\di}{\di t} \sum_{\substack{h,k \\ k\neq i}} |c^i_{hk}|^2 \leq \kappa_f N^{-1} + (1+4M_+) \sum_{\substack{h,k \\ k\neq i}} |c^i_{hk}|^2,
\]
thus, by Gronwall's inequality, \eqref{MF_G1} is proved.
Multiplying the equation for $c^i_{ik}$ by $c^i_{ik}$ and summing over $i \neq k$ we obtain
\[  \begin{split}
\frac12 \frac{\di}{\di t} \sum_{i\neq k} |c^i_{ik}|^2 &=  \sum_{i\neq k} f^i_{ik} c^i_{ik} - \sum_{i\neq k} B( c)_{kk} |c^i_{ik}|^2 - \sum_{i,j \neq k} c^i_{ik} c^i_{ji}  c^j_{jk} - \sum_{\substack{i \neq k \\ j\neq i}} c^i_{ik} c^i_{jk}  c^j_{ji} \\
&\leq \sum_{i\neq k} f^i_{ik} c^i_{ik} - \sum_{i\neq k} B( c)_{kk} |c^i_{ik}|^2 - \tr(\hat B(c)^\trn B(c) \hat B(c)) - \sum_{\substack{i \neq k \\ j\neq i}} c^i_{ik} c^i_{jk}  c^j_{ji},
\end{split} \]
where $\hat B(c)_{hk} = B(c)_{hk}$ if $h = k$ and it is null otherwise. By \eqref{MF_G1}
\[
\bigg| \sum_{\substack{i \neq k \\ j\neq i}} c^i_{ik} c^i_{jk} c^j_{ji} \bigg| \leq \sup_\ell \sum_{i\neq \ell} |c^i_{i\ell}|^2 \bigg( \sum_{\substack{i \neq k \\ j\neq i}} |c^i_{jk}|^2 \bigg)^{\frac12} \leq \sqrt{\kappa_0}\, \sup_\ell \sum_{i\neq \ell} |c^i_{i\ell}|^2.
\]
We have
\[
\frac{\di}{\di t} \sum_{i\neq k} |c^i_{ik}|^2  \leq \kappa_f N^{-1} + (1+4M_+) \sum_{i\neq k} |c^i_{ik}|^2 + 2\sqrt{\kappa_0}\, \sup_\ell \sum_{i\neq \ell} |c^i_{i\ell}|^2;
\]
that is,
\[
\sup_k \sum_{i\neq k} |c^i_{ik}(t)|^2 \leq (\kappa_g + \kappa_f t) N^{-1} + \int_0^t \!(1+4M_++2\sqrt{\kappa_0}\,) \sup_k \sum_{i\neq k} |c^i_{ik}|^2.
\]
By Gronwall's inequality one finds \eqref{MF_estcallS}.
Consider at this point that $c^i_{ii}$ solves
\begin{equation} \label{MF_eqcii}
\dot c^i_{ii} + |c^i_{ii}|^2 + 2\sum_{j\neq i} c^j_{ji} c^i_{ij} = f^i_{ii},
\end{equation}
where by \eqref{MF_G1} and \eqref{MF_estcallS}
\[
\Big| \sum_{j\neq i} c^j_{ji} c^i_{ij} \Big|^2 \leq {\kappa_0\kappa_1} N^{-2}.
\]
Therefore, multiplying equation \eqref{MF_eqcii} by $c^i_{ii}$ one obtains
\[
\frac{\di}{\di t}\, |c^i_{ii}|^2 \leq \kappa_f + (2+M_+)|c^i_{ii}|^2 + {\kappa_0\kappa_1} N^{-2}
\]
and thus \eqref{MF_estcallC0} by Gronwall's inquality.
\end{proof}

\begin{lem} \label{MF_l2}
Under the hypotheses of \Cref{MF_thm}, let $c$ be as in \Cref{MF_l1}. Suppose that
\[
Me^{-2MT} > K_g, \qquad K_f < \frac{2M(Me^{-MT}-K_g)}{1-e^{-2MT}}.
\]
Then there exists $N_*(T)$ such that $B(c)(T) > -MI$ provided that $N > N_*(T)$. Furthermore, the map $T \mapsto N_*(T)$ is continuous on $[0,+\infty)$.
\end{lem}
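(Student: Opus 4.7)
The plan is to derive a matrix differential identity for $A \defeq B(c)$, reduce the problem to a scalar Riccati inequality for the smallest eigenvalue of its symmetric part, control the coupling error by \Cref{MF_l1}, and close via comparison with the associated linear second-order equation.

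Computing the $(i,j)$-entry of \eqref{MF_sys} for $h=i$ and using $E^i=e^i(e^i)^\trn$, the summand $[(c^i)^\trn E^i c^i]_{ij}=A_{ii}A_{ij}$ exactly cancels the $\ell=i$ contribution of $[B(c)^\trn c^i]_{ij}$, while $[c^i B(c)]_{ij}=(A^2)_{ij}$. This yields the compact identity
\[
\dot A = B(f) - A^2 - R, \qquad R_{ij} \defeq \sum_{\ell \neq i} A_{\ell i}\,c^i_{\ell j}.
\]
Decomposing $A = S + K$ with $S$ symmetric and $K$ skew-symmetric, using $A^2+(A^\trn)^2 = 2(S^2+K^2)$ and $K^2\leq 0$, I obtain
\[
\dot S = F - S^2 - K^2 - R_{\mathrm{sym}}, \qquad F \defeq \tfrac12\bigl(B(f)+B(f)^\trn\bigr) \geq -K_f I,
\]
where the favourable term $-K^2\geq 0$ is dropped in one-sided estimates. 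For the residual, fixing $|x|=1$ a twofold Cauchy--Schwarz gives $|\langle Rx,x\rangle|^2 \leq \sum_i \bigl(\sum_{\ell\neq i}|c^\ell_{\ell i}|^2\bigr)\bigl(\sum_{h,k\neq i}|c^i_{hk}|^2\bigr)$, where for the second factor I use the symmetry of $c^i$ together with $|(c^ix)_\ell|^2 \leq \sum_j|c^i_{\ell j}|^2$; combining with \eqref{MF_G1} and \eqref{MF_estcallS} this gives $|\langle Rx,x\rangle|\leq \eta_N \defeq \sqrt{\kappa_0\kappa_1/N} \to 0$ as $N \to \infty$.

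Let $\Lambda(t)$ denote the smallest eigenvalue of $S(t)$: it is locally Lipschitz in $t$ and, by the envelope theorem, for a.e.\ $t$ and any unit eigenvector $x$ with $Sx=\Lambda x$,
\[
\dot\Lambda \geq -K_f - \Lambda^2 + |Kx|^2 - \eta_N \geq -K_f - \Lambda^2 - \eta_N, \qquad \Lambda(0) \geq -K_g.
\]
Scalar comparison then yields $\Lambda \geq \lambda_N$ on $[0,T]$, where $\dot \lambda_N + \lambda_N^2 = -K_f - \eta_N$ with $\lambda_N(0)=-K_g$. The Cole--Hopf substitution $\lambda_N = \dot v_N/v_N$ linearizes this to $\ddot v_N + (K_f+\eta_N)v_N = 0$, $v_N(0)=1$, $\dot v_N(0)=-K_g$, so that the desired $\Lambda(T) > -M$ is equivalent to $\dot v_N(T) + Mv_N(T) > 0$. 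A case analysis on the sign of $K_f+\eta_N$, representing $v_N$ as a linear combination of hyperbolic or trigonometric functions of $\sqrt{|K_f+\eta_N|}\,t$, shows that this last inequality is implied in the limit $\eta_N=0$ by the open inequalities \eqref{MF_condKf}; choosing $N_*(T)$ large enough to absorb the perturbation $\eta_N$ closes the argument. Continuity of $T \mapsto N_*(T)$ then follows from continuity in $T$ of $\kappa_0, \kappa_1, v_N$ and the openness of \eqref{MF_condKf}. The main technical step will be the bookkeeping in the case analysis, in particular keeping enough slack across the two sign regimes of $K_f+\eta_N$ to swallow the $O(1/\sqrt N)$ perturbation uniformly on compact time intervals.
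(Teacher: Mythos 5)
Your matrix identity $\dot A = B(f) - A^2 - R$ with $R = D$ is correct and matches equation \eqref{MF_eqB} of the paper, your bound $|\langle Rx,x\rangle|\leq\sqrt{\kappa_0\kappa_1/N}$ is correct and matches the paper's $\norm{D}_2$ bound, and the envelope-theorem step for $\Lambda=\lambda_{\min}(S)$ is fine. The gap is in the final comparison step, and it is fatal: the differential inequality $\dot\Lambda \geq -K_f - \Lambda^2 - \eta_N$ is too lossy. The term $-\Lambda^2$ is adversarial whenever $\Lambda\neq 0$, so the scalar Riccati comparison yields a strictly weaker conclusion than needed. Concretely, take $K_g=0$, $M=T=1$, $\eta_N=0$. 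Then \eqref{MF_condKf} reads $K_f < 2e^{-1}/(1-e^{-2}) \approx 0.851$, and $K_f=0.8$ is admissible. But your comparison solution solves $\dot\lambda=-K_f-\lambda^2$, $\lambda(0)=0$, giving $\lambda(t)=-\sqrt{K_f}\tan(\sqrt{K_f}\,t)$, and $\lambda(1)\approx -\sqrt{0.8}\tan(\sqrt{0.8}\,)\approx -1.12 < -1 = -M$. So the comparison bound $\Lambda(T)\geq\lambda(T)$ is consistent with $\Lambda(T)\leq -M$ and cannot deliver the conclusion.

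The paper avoids this by testing against an adjoint flow rather than a fixed eigenvector. Setting $\dot\xi = B(c)^\trn\xi$ with terminal data $\xi(T)=\zeta$, one computes
\[
\frac{\di}{\di t}\bigl(\xi^\trn B(c)\xi\bigr)
= \xi^\trn\bigl(B(f)-D\bigr)\xi + \xi^\trn B(c)B(c)^\trn\xi,
\]
because the $-\xi^\trn B(c)^2\xi$ contribution from $\dot{B(c)}$ is cancelled by $\dot\xi^\trn B(c)\xi$ while $\xi^\trn B(c)\dot\xi$ produces $\xi^\trn B(c)B(c)^\trn\xi\geq 0$. The quadratic term thus becomes \emph{favourable} instead of adversarial, and what remains is a linear estimate $\frac{\di}{\di t}(\xi^\trn B(c)\xi) \geq -(K_f+\eta_N)|\xi|^2$, integrated against the controlled weight $|\xi(t)|\leq 1\vee e^{2M(T-t)}$. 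That is what produces an estimate of the form \eqref{MF_estBc} with the factor $h(MT)=(e^{2MT}-1)/(2MT)$, and hence the condition in \eqref{MF_condKf}. Your eigenvalue ODE cannot see this cancellation because the test vector $x$ is frozen to an eigenvector of $S(t)$ rather than transported by the flow, so there is no $\dot\xi$ contribution to offset $-\Lambda^2$. To repair your argument you would need to replace the scalar Riccati comparison by this adjoint-flow integration (or an equivalent change of variables that absorbs $-\Lambda^2$).
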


\begin{proof}
Consider that $B(c)$ solves the equation
\begin{equation} \label{MF_eqB}
\dot B(c) + B(c)^2 = B(f) - D. 
\qquad \text{where} \quad 
D_{ik} = \sum_{j\neq i} c^i_{kj} c^j_{ji}.
\end{equation}
By estimates \eqref{MF_G1} and \eqref{MF_estcallS},
\[
\norm{D}_2^2 = \sum_{i,k} \bigg( \sum_{j\neq i} c^i_{kj} c^j_{ji} \bigg)^2 \leq \sup_i \sum_{j \neq i} |c^j_{ji}|^2 \cdot \sum_{\substack{i,j,k \\ j \neq i}} |c^i_{jk}|^2 \leq \kappa_0 \kappa_1 N^{-1}.
\]
Let now $\xi$ solve the linear equation $\dot \xi = B(c)^\trn \xi$ on $[0,T)$ with terminal condition $\xi(T) = \zeta$, for some arbitrary $\zeta \in \bb S^{N-1}$; note that since $\frac\di{\di t} |\xi|^2 \geq -4M|\xi|^2$ we have
\[
1 \wedge e^{2M(T-\cdot)} \leq |\xi| \leq 1 \vee e^{2M(T-\cdot)} \quad \text{on $[0,T]$}.
\]
By \eqref{MF_eqB} we get
\[
(\xi^\trn B(c) \xi)\dot{\vphantom{|}}\, = \xi^\trn (B(f)-D + B(c)B(c)^\trn) \xi \geq \xi^\trn (B(f)-D) \xi \geq -(K_f + \sqrt{\kappa_0\kappa_1} N^{-\frac12})|\xi|^2.
\]
If $K_f < 0$ and $N$ is large enough, then $B(c)(T) > - \nu K_g I$, where
\[
\nu  \defeq \begin{cases}
e^{2MT} & \text{if $MK_g \geq 0$} \\
1 & \text{if $M K_g \leq 0$},
\end{cases}
\]
thus it is easily seen that $B(c)(T) > - MI$.
If $K_f \geq 0$, then
\begin{equation} \label{MF_estBc}
B(c)(T) \geq - \Big( \nu K_g + \int_0^T ( K_f + \sqrt{\kappa_0\kappa_1} N^{-\frac12} ) \, e^{2M(T-\cdot)} \Big) I .
\end{equation}
It follows that in order to have $B(c)(T) > -MI$ it suffices that
\[
\nu K_g + T K_f h(MT) + N^{-\frac12} T \sqrt{\kappa_0(T)\kappa_1(T)}\,h(MT) < M,
\]
where $h(z) \defeq (e^{2z}-1)/(2z)$. This is guaranteed by our assumptions on $K_g$, $K_f$ and $M$ (by which $\nu K_g + T K_f h(MT) < M$), provided that $N$ is large enough. The continuity of $N_*(T)$ is easily seen as one can write it explicitly using the estimates above.
\end{proof}

\begin{proof}[Proof of \Cref{MF_thm}]
Fix $M$ according to \Cref{MF_l2} (note that this implies that $M > K_g$). By the Cauchy--Lipschitz theorem there exists $\tau > 0$ such that \eqref{MF_sys} has a unique absolutely continuous solution on $[0,\tau)$.  By taking $\tau$ smaller if necessary, we may suppose that by continuity $B(c) > -M I$ on $[0,\tau)$. Then
\[
\bar \tau \defeq \sup\{ \tau > 0 :\ \text{\eqref{MF_sys} has a unique absolutely continuous solution with $B(c) > - MI$ on $[0,\tau)$} \}
\]
is well-defined. Seeking for a contradiction, suppose that $\bar \tau < T$. Let $N_0 \defeq \max_{[0,T]} N_*$, where $N_*$ is given by \Cref{MF_l2}.
By \Cref{MF_l1}, $c$ continuously extends on $[0,\bar\tau]$, with $B(c)(\bar\tau) > -MI$ as guaranteed by \Cref{MF_l2}, thanks to our choice of $N_0$. By the Cauchy--Lipschitz theorem one can extend the solution on $[0,\tau')$ for some $\tau' > \bar\tau$ and by continuity we may suppose that $B(c) > -MI$ on $[0,\tau')$. This contradicts the maximality of $\bar\tau$, thus $\bar\tau \geq T$. Finally, estimate \eqref{MF_estCN} follows from \eqref{MF_G1}, \eqref{MF_estcallS} and \eqref{MF_estcallC0}. This concludes the proof.
\end{proof}



\end{document}